%
%
%


\documentclass{amsproc}




\usepackage[hidelinks,draft=false]{hyperref}
\hypersetup{
  colorlinks   = true, 
  urlcolor     = blue, 
  linkcolor    = blue, 
  citecolor   = red 
}
\usepackage{amssymb}
\usepackage{mathrsfs}
\usepackage{dsfont}
\usepackage[nameinlink,noabbrev,capitalize]{cleveref}
\crefname{equation}{}{}
\usepackage{interval}
\intervalconfig{soft open fences}
\usepackage{enumitem}
\usepackage[nameinlink,noabbrev,capitalize]{cleveref}
\crefname{equation}{}{}
\newlist{theoenum}{enumerate}{1}
\setlist[theoenum]{label=\normalfont(\roman*), ref=\theproposition~\normalfont(\roman*), noitemsep, partopsep=0pt, topsep=0pt, parsep=0pt, itemsep=0pt}
\crefalias{theoenumi}{theorem}
\usepackage{url}


\theoremstyle{plain}
\newtheorem{theorem}{Theorem}
\newtheorem*{theorem*}{Theorem}
\newtheorem{proposition}[theorem]{Proposition}

\newtheorem*{lemma*}{Lemma}

\theoremstyle{definition}
\newtheorem{definition}[theorem]{Definition}

\theoremstyle{remark}

\newtheorem*{remark*}{Remark}

\theoremstyle{plain}
\newtheorem{openproblem}{Open Problem}

\newcommand*\diff{\mathop{}\!\mathrm{d}}
\newcommand{\R}{\mathds{R}}
\newcommand{\G}{\mathds{G}}
\newcommand{\C}{\mathds{C}}
\newcommand{\A}{\mathscr{A}}

\newcommand{\me}{\mathrm{e}}
\newcommand{\I}{\mathrm{i}}
\newcommand*{\tran}{^{\mkern-1.5mu\mathsf{T}}}

\renewcommand{\L}{\mathrm{L}}
\renewcommand{\exp}{\mathrm{exp}}
\renewcommand{\epsilon}{\varepsilon}
\DeclareMathOperator{\Lie}{\mathrm{Lie}}
\DeclareMathOperator{\T}{\mathrm{T}}
\DeclareMathOperator{\vspan}{\mathrm{span}}
\DeclareMathOperator{\Ima}{\mathrm{Im}}
\DeclareMathOperator{\Kern}{\mathrm{Ker}}
\DeclareMathAlphabet{\mymathbb}{U}{BOONDOX-ds}{m}{n}

\numberwithin{equation}{section}

\begin{document}

\title[Normal forms for the exponential map of $\mathds{G}_\alpha$, $\mathrm{SU}(2)$, and $\mathrm{SL}(2)$]{Normal forms for the sub-Riemannian exponential map of $\mathds{G}_\alpha$, $\mathrm{SU}(2)$, and $\mathrm{SL}(2)$}


\author{Samuël Borza}
\address{Scuola Internazionale Superiore di Studi Avanzati (SISSA), via Bonomea 265, 34136 Trieste (TS), Italy}
\curraddr{Scuola Internazionale Superiore di Studi Avanzati (SISSA), via Bonomea 265, 34136 Trieste (TS), Italy}
\email{sborza@sissa.it}
\thanks{This project has received funding from the European Research Council (ERC) under the European Union’s Horizon 2020 research and innovation programme (grant agreement No. 945655).}


\subjclass[2020]{Primary 53C17, 58C25; Secondary 22E30}

\date{\today}

\begin{abstract}
The goal of this paper is to use singularity theory to find normal forms near the critical points of the sub-Riemannian exponential map.  Three cases are studied: the $\alpha$-Grushin plane with fold singularities, and the special unitary group $\mathrm{SU}(2)$ and special linear group $\mathrm{SL}(2)$ with fold and saddle-like singularities. They serve as examples of different sub-Riemannian structures and the techniques presented can be applied to other contexts. The paper also includes a discussion of the implications of this approach, as well as open problems. 
\end{abstract}

\maketitle


\section{Introduction}

The exponential map is a fundamental object used to study the geometry of Riemannian, Finsler, and sub-Riemannian manifolds. The exponential map plays a crucial role as it is related to geodesics, curvature, and other important geometric properties. Its behavior plays a crucial role in determining key analytic and topological characteristics of the underlying manifold.

The structure of conjugate locus, which is the set of singularities of the exponential map, is particularly important. In this paper, the exponential map is approached through singularity theory. The objective is to determine the normal forms of the exponential map in the neighbourhood of a critical point. We base our approach on the original work of \cite{warner1965}, as well as on \cite{borza2021}.

We deliberately choose to focus on three specific examples, the $\alpha$-Grushin plane, the three-dimensional special unitary group, and the three dimensional special linear group, in order to illustrate the relevant concepts and techniques. These examples are representative of different types of sub-Riemannian structure and the analysis presented can be adapted to other structures.

In \cref{sec:preliminaries}, the fundamentals of singularity theory and sub-Riemannian geometry will be presented, with a specific focus on Whitney folds, a significant type of singularity. Conjugate covectors, that is to say critical values of the exponential map,  will also be discussed in relation to Jacobi fields, and the relevant theory will be outlined.

The $\alpha$-Grushin plane is a specific type of almost Riemannian manifold that serves as a generalisation of the traditional Grushin plane. It is often considered a model example of a rank-varying sub-Riemannian manifold. We will show in \cref{sec:agrushin} that the exponential map of the $\alpha$-Grushin plane is equivalent to $f(x, y) = (x^2, y)$ in the neighbourhood of a singularity, see \cref{theo:normalformsGa}.

The three-dimensional special unitary group $\mathrm{SU}(2)$, on the other hand, is an example of 3D contact structure. It will be studied in \cref{sec:su2}. It is a Lie group equipped with a bi-invariant sub-Riemannian structure, just as the Heisenberg group is. The study will reveal that the conjugate locus of $\mathrm{SU}(2)$ is stratified by disconnected manifolds (see \cref{prop:conjSU2submanifold}). Additionally, it will be demonstrated in \cref{prop:normalformsSU2} that the exponential map of $\mathrm{SU}(2)$ can be represented as either $f(x, y, z) = (x z, y, z)$ or $f(x, y, z) = (x^2, y, z)$ depending on the nature of the intersection between the tangent space of the conjugate locus and the kernel of the exponential map. In \cref{sec:sl2}, the three dimensional special linear group $\mathrm{SL}(2)$ is handled in a similar manner.

The normal forms of the exponential map for these examples demonstrate that the exponential map does not exhibit the same behavior as $f(x) = x^3$ in the neighbourhood of a singularity, as shown in \cref{prop:localnoninjGa}, \cref{prop:localnoninjSU2} and \cref{prop:localnoninjSL2}. This property is well-known in Finsler geometry and is originally due to Morse and Littauer \cite{morselittauer1932} (see also \cite{warner1965} and the references therin for the historical development). It is not yet completely understood in sub-Riemannian geometry. \cref{sec:conclusion} delves into the relevant context, the challenges, and some open problems.

\section{Preliminaries}

\label{sec:preliminaries}

\subsection{Singularity theory and Whitney folds}

\label{subsec:whitneyfoldpleat}

Whitney's work \cite{whitney1955} provided the foundation for the study of singularities of smooth maps. Specifically, Whitney demonstrated that a generic smooth map from the plane into the plane can exhibit only two types of singularities, a fold or a cusp, up to a diffeomorphic change of coordinate. For readers interested in more advanced topics in singularity theory, we recommend the books \cite{guillemin}, and \cite{arnold1}.

We begin by defining what constitutes a ``good'' singularity according to Whitney.

\begin{definition}
\label{def:goodsingularity}

Let $f : M \to N$ be a smooth map between two manifolds $M$ and $N$ of equal dimensions. A point $p \in M$ is a singularity of $f$ if $\mathrm{Ker}(\diff_p f) \neq \{0\}$. The set of all singularities of $f$ is denoted by $\mathrm{Sing}(f)$. 

A singularity $p$ of $f$ is a \textit{good singularity}  if the following two additional
conditions are satisfied.
\begin{theoenum}
\item There exists a neighbourhood $\mathcal{U}$ of $p$ such that $0$ is a regular value of
\[
\mathcal{U} \to \R : x \mapsto \mathrm{det}(\diff_x f).
\]
\item For all $x \in \mathrm{Sing}(f) \cap \mathcal{U}$, we have that $\dim \mathrm{Ker}(\diff_x f) = 1$.
\end{theoenum}
\end{definition}

The above definition implies in particular that the set $\mathrm{Sing}(f) \cap \mathcal{U}$ of singularities of $f$ in the neighbourhood of $p$ is a submanifold of $M$ of codimension 1, by the implicit function theorem. 
\begin{definition}
\label{def:foldsingularity}
A good singularity of a smooth map $f : M \to N$ is a \textit{fold singularity} if
\[
\T_p(\mathrm{Sing}(f) \cap \mathcal{U}) \cap \mathrm{Ker}(\diff_x f) \neq \left\{0\right\} \text{ in } \T_p(M).
\]
\end{definition}

Whitney proved that, up to equivalence by smooth change of variables, the expression of a generic smooth map $f : M \to N$ in the neighbourhood of a good singularity $p \in M$ depends on whether $\mathrm{Ker}(\diff_p f)$ is tranversal to $\mathrm{Sing}(f) \cap \mathcal{U}$ or not. For the purposes of this work, we will only be addressing the transversal case here, which is the first type of singularity that emerges when classifying the singularities of a generic map (see \cite{arnold1}).

\begin{theorem}[{\cite[Section 15.]{whitney1955}}]
\label{theo:whitneyfoldpleat}
    If $p$ is a good and fold singularity of a smooth map $f : M \to N$ between manifolds $M$ and $N$ of same dimension $n$, then $f$ is equivalent to the map
    \begin{equation}
        \label{eq:whitneyfold}
        \R^n \to \R^n: (x, y_1, \dots, y_{n - 1}) \mapsto (x^2, y_1, \dots, y_{n - 1}).
    \end{equation}
    In other words, there exists a coordinate chart centered around $p$ in $M$ and a coordinate chart centered around $f(p)$ in $N$ such that the representation of $f$ in these coordinates is \cref{eq:whitneyfold}.
\end{theorem}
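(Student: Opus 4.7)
The plan is a two-step reduction in local coordinates. First, by the good singularity hypothesis, $\mathrm{Sing}(f) \cap \mathcal{U}$ is a codimension-one submanifold of $M$ near $p$, so I would choose coordinates $(x, y_{1}, \ldots, y_{n-1})$ on $M$ centered at $p$ in which $\mathrm{Sing}(f) = \{x = 0\}$. The fold condition ensures that $\diff_{p} f$ restricted to $\T_{p} \mathrm{Sing}(f)$ is injective, so $f|_{\mathrm{Sing}(f)}$ is an immersion near $p$; consequently $f(\mathrm{Sing}(f))$ is a codimension-one submanifold of $N$, and I would pick coordinates $(u, v_{1}, \ldots, v_{n-1})$ on $N$ centered at $f(p)$ with $f(\mathrm{Sing}(f)) = \{u = 0\}$. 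In these coordinates $f = (f_{1}, f_{2}, \ldots, f_{n})$ with $f_{1}(0, y) \equiv 0$.

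Second, I would straighten the last $n - 1$ components and then normalise the first one. Introducing new source coordinates $(x, y'_{j}) := (x, f_{j+1}(x, y))$ yields a local diffeomorphism because the relevant $(n - 1) \times (n - 1)$ Jacobian block at the origin is exactly the differential of the immersion $f|_{\mathrm{Sing}(f)}$ in the $v$-coordinates, and hence is invertible. In these new coordinates $f(x, y') = (g(x, y'), y')$ with $g(0, y') \equiv 0$. Since $\det(\diff f) = \partial_{x} g$ and $\mathrm{Sing}(f) = \{x = 0\}$, one also has $\partial_{x} g(0, y') \equiv 0$; two applications of Hadamard's lemma in the $x$-variable then give $g(x, y') = x^{2} k(x, y')$ for some smooth $k$. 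The regular-value clause of the good singularity definition forces $\partial_{x}(\det \diff f)(0) = 2 k(0, 0) \neq 0$, and after flipping the sign of $u$ if needed one may assume $k(0, 0) > 0$. Finally, the substitution $\tilde{x} := x \sqrt{k(x, y')}$ defines a further local diffeomorphism of $M$ that brings $f$ into the required form $(\tilde{x}, y') \mapsto (\tilde{x}^{2}, y')$.

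The main obstacle I anticipate is verifying that the source change of variables $(x, y) \mapsto (x, f_{2}, \ldots, f_{n})$ is actually a local diffeomorphism at $p$, which is the one place where the fold hypothesis enters in a nontrivial way, via the injectivity of $\diff_{p} f$ on $\T_{p} \mathrm{Sing}(f)$. The remaining ingredients — Hadamard's lemma, the regular-value condition, and the Morse-type square-root trick — are routine.
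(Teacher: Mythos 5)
The paper does not actually prove this statement: it is imported directly from Whitney's 1955 paper via the citation, so there is no internal proof to compare against. Your argument is a correct, self-contained proof along the standard modern route: adapted source and target coordinates making $\mathrm{Sing}(f)=\{x=0\}$ and $f(\mathrm{Sing}(f))=\{u=0\}$; a source change of variables straightening the last $n-1$ components, which is legitimate precisely because transversality of $\Kern(\diff_p f)$ to $\T_p(\mathrm{Sing}(f))$ makes $\diff_p f$ injective on $\T_p(\mathrm{Sing}(f))$ and hence makes the block $(\partial f_{j+1}/\partial y_k)(0)$ invertible; two applications of Hadamard's lemma giving $g=x^2k$; the regular-value clause forcing $k(0,0)\neq 0$ (this is sound: in the straightened coordinates $\det\diff f=\partial_x g=x(2k+x\partial_x k)$, whose $y'$-derivatives vanish identically on $\{x=0\}$, so the only surviving component of $\diff(\det\diff f)$ at $p$ is $\partial_x(\det\diff f)(0)=2k(0,0)$); and the Morse-type substitution $\tilde x=x\sqrt{k}$. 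This is cleaner and dimension-independent compared with Whitney's original planar argument, and it makes visible exactly where each hypothesis is used. One caveat you should state explicitly: your proof reads the fold condition as $\T_p(\mathrm{Sing}(f)\cap\mathcal U)\cap\Kern(\diff_p f)=\{0\}$, whereas \cref{def:foldsingularity} as printed asserts the intersection is $\neq\{0\}$. Your reading is the correct (and clearly intended) one --- it is the only one under which the theorem holds, and it matches the surrounding discussion of the transversal case as well as the usage in \cref{theo:normalformsGa}, where the stratum with zero-dimensional intersection is the one called a fold --- but under the literal printed definition your injectivity claim for $\diff_p f$ restricted to $\T_p(\mathrm{Sing}(f))$ would fail, so the discrepancy is worth flagging rather than silently resolving.
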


\subsection{Sub-Riemannian geometry and Jacobi equation}

\label{subsec:jacobifields}

A comprehensive reference for sub-Riemannian geometry can be found in \cite{comprehensive2020}. 

A sub-Riemannian structure on a smooth manifold $M$ is given by a set of $m$ global vector fields $X_1, \dots, X_m$. The distribution at $x \in M$ is the subspace of $\T_x(M)$ generated by the vector fields
\[
\mathcal{D}_x := \vspan \{X_1(x), \dots, X_m(x)\}.
\]
A horizontal (or admissible) curve $\gamma : \interval{a}{b} \to M$ is an absolutely continuous path such that there exists $u \in \mathrm{L}^2(\interval{a}{b}, \R^m)$ satisfying
\[
\dot{\gamma}(t) = \sum_{i = 1}^m u_i(t) X_i(\gamma(t)), \ \text{ for almost every } t \in \interval{a}{b}.
\]
We can define an inner product on $\mathcal{D}_x$  applying the polarisation formula to
\[
\langle v, v \rangle_x := \inf \left\{ \sum_{i = 1}^m u_i^2 \mid \sum_{i = 1}^m u_i X_i(x) = v \right\}.
\]
It can be proven that the map $t \mapsto \sqrt{\langle \dot{\gamma}(t), \dot{\gamma}(t) \rangle_{\gamma(t)}}$ is measurable if $\gamma$ is admissible. The sub-Riemannian length is then defined by
\[
\L(\gamma) = \int_{a}^{b} \lVert \dot{\gamma} (t) \rVert_{\gamma(t)} \diff t,
\]
while the sub-Riemannian distance of $M$ is
\[
	\diff(x, y) := \inf \{ \L(\gamma) \mid \gamma : \interval{a}{b} \to M \text{ is horizontal and } \gamma(a) = x \text{ and } \gamma(b) = y \}.
\]
It will always be assumed that a sub-Riemannian structures satisfies the \textit{Hörmander condition}, that is to say $\Lie_q(\mathcal{D}) = \T_q(M)$ for all $q \in M$. We also say in this case that $\mathcal{D}$ is \textit{bracket-generating}. This insures, by Chow–Rashevskii's theorem, that $\diff$ is a well-defined distance function and that the manifold and metric topology of $M$ coincide.

The Hamiltonian induced from the sub-Riemannian structure is the function
\[
H : \T^*(M) \to \mathds{R} : (q, \lambda_0) \mapsto \frac{1}{2}\sum_{k = 1}^m \langle \lambda_0, X_k(q) \rangle^2.
\]
We recall that the Hamiltonian vector field of a map $a \in C^{\infty}(\T^*(M))$ is the unique vector field $\overrightarrow{a}$ on $\T^*(M)$ that satisfies
\[
\sigma(\cdot, \overrightarrow{a}(\lambda)) = \diff_\lambda a, \qquad \forall \lambda \in \T^*(M),
\]
where $\sigma$ is the canonical symplectic structure on $\T^*(M)$. We also denote by $\pi : \T^*(M) \to M$ the bundle projection of $\T^*(M)$ into $M$.

In the examination of sub-Riemannian geodesics of $M$, which are locally minimising constant speed curves with respect to the sub-Riemannian distance, two types of geodesics are identified: abnormal and normal. A geodesic $\gamma(t)$ is normal if there exists a lift $\lambda(t) \in \T^*(M)$ such that $\pi(\lambda(t)) = \gamma(t)$ and satisfying
\begin{equation}
    \label{eq:hamiltoneqs}
    \dot{\lambda}(t) = \overrightarrow{H}(\lambda(t)).
\end{equation}
Conversely, the projection onto $M$ of any solution to \cref{eq:hamiltoneqs} in $\T^*(M)$ is a locally minimising path parametrised with constant speed $2 H$.

While the study of abnormal extremals is an interesting topic related to hard problems in sub-Riemannian geometry, this work will only focus on normal extremals, that is solution to Hamilton's equation \cref{eq:hamiltoneqs}. The flow of Hamilton's equation is denoted by $\me^{t \overrightarrow{H}}$.

Consider a normal extremal $\lambda(t)$ of a sub-Riemannian manifold $M$, that is to say
\[
\lambda(t) := \me^{t \overrightarrow{H}}(q, \lambda_0),
\]
for some initial condition $(q, \lambda_0) \in \T^*(M)$.
A Jacobi field $\mathcal{J}$ along $\lambda(t)$ is the variation field of a variation of $\lambda(t)$ through normal extremals. In other words, consider a variation of $\lambda(t)$ through normal extremals
\[
\Lambda(t, s) = \me^{t \overrightarrow{H}}(\Gamma(s)),
\]
where $\Gamma : \ointerval{-\epsilon}{\epsilon} \to \T^*(M)$ is a curve such that $\Gamma(0) = (q, \lambda_0)$, then the corresponding Jacobi field is
\[
\mathcal{J}(t) := \dfrac{\partial \Lambda}{\partial s}(t, 0) = \diff_{\lambda_0}\me^{t \overrightarrow{H}} [\dot{\Gamma}(0)].
\]
A Jacobi field $\mathcal{J}$ along $\lambda$ is therefore uniquely determined by its initial value $\dot{\Gamma}(0) \in \mathrm{T}_{(q, \lambda_0)}(\mathrm{T}^*(M))$. The dimension of the space of Jacobi fields along the normal geodesic $\lambda$ is 2$n$, where $n$ is the dimension of the manifold $M$.

Equivalently, it can be seen that a vector field $\mathcal{J}$ is a Jacobi field along the extremal $\lambda$ if and only if it satisfies
\begin{equation}
	\label{lieeq}
	\dot{\mathcal{J}} := \mathcal{L}_{\overrightarrow{H}} \mathcal{J} = 0,
\end{equation}
where $\mathcal{L}_{\overrightarrow{H}} \mathcal{J}$ is the Lie derivative of a vector field along $\lambda$ in the direction of $\overrightarrow{H}$:
\begin{equation}
    \label{eq:liederivatire}
    \mathcal{L}_{\overrightarrow{H}} \mathcal{J} (t) = \lim_{\epsilon \to 0} \dfrac{(\diff_{\lambda(t + \epsilon)} \me^{-\epsilon \overrightarrow{H}})[\mathcal{J}(t + \epsilon)] - \mathcal{J}(t)}{\epsilon} = \frac{\diff}{\diff\epsilon}\Big|_{\epsilon=0} (\diff_{\lambda(t + \epsilon)} \me^{-\epsilon \overrightarrow{H}})[\mathcal{J} (t + \epsilon)].
\end{equation}

We can re-write the differential equation \cref{lieeq} by using a symplectic frame along the curve $\lambda(t)$. A symplectic moving frame $(E, F) = (E_1, \dots, E_n, F_1, \dots, F_n)$ is a Darboux basis of $\T_{\lambda(t)}(\T^*(M))$ such that $\diff_{\lambda(t)} \pi[E_i] = 0$ for all $i = 1, \dots, n$.

The elements of a symplectic moving frame $(E, F)$ along $\lambda(t)$ always satisfy
\begin{equation}
    \label{eq:noncanonicalmovingframe}
    \begin{pmatrix}
			\dot{E} \\
			\dot{F}
		\end{pmatrix}
		=
		\begin{pmatrix}
			C_1(t)\tran & -C_2(t) \\
			R(t) & -C_1(t)
		\end{pmatrix}
		\begin{pmatrix}
			E \\
			F
    \end{pmatrix},
\end{equation}
for smooth curves of $n \times n$ matrices $C_1(t), C_2(t)$ and $R(t)$ such that $C_2(t)$ and $R(t)$ are symmetric, $C_2(t) \geq 0$. A vector field $\mathcal{J}(t) = \sum_{i = 1}^n p_i(t) E_i(t) + x_i(t) F_i(t)$ along $\lambda(t)$ is then a Jacobi field, i.e. it satisfies the differential equation \cref{lieeq}, if and only if
\begin{equation}
    \label{eq:jacobifields}
    \begin{pmatrix}
			\dot{p} \\
			\dot{x}
		\end{pmatrix}
		=
		\begin{pmatrix}
			- C_1(t) & -R(t) \\
			C_2(t) & C_1(t)\tran
		\end{pmatrix}
		\begin{pmatrix}
			p \\
			x
    \end{pmatrix}.
\end{equation}

The structural equation \cref{eq:noncanonicalmovingframe} can be made simpler by utilising a canonical symplectic frame along $\lambda(t)$, given that the normal extremal $\lambda(t)$ is \textit{ample} and \textit{equiregular}. We will not go into all the technical definitions here, but they can be found in references such as \cite{curvature} and \cite{zelenkoli2009}. It is sufficient for the present work to note that an ample and equiregular geodesic is associated with a Young diagram $D$. An ample and equiregular geodesic has a finite rank $k$, and row integers $n_1 \leq n_2 \leq \dots \leq n_k$ satisfying $\sum_{i = 1}^k n_i = \dim(M)$. The corresponding Young diagram is then
\[
D := \left\{ ai := (a, i) \mid a = 1, \dots, k \text{ and } i = 1, \dots, n_a\right\}.
\]

The following statement provides a summary of the structure of this canonical frame.

\begin{theorem}[\cite{zelenkoli2009}]
    \label{zelenkoli}
    If the normal extremal $\lambda(t)$ is ample and equiregular with Young diagram $D$, there exists a symplectic moving frame $(E_1, \dots, E_n, F_1, \dots, F_n)$ along $\lambda(t)$ such that the matrices $C_1(t)$, $C_2(t)$ in the structural equation \cref{eq:noncanonicalmovingframe} take the form
    \[
    C_1(t) =
    \begin{pmatrix}
    C_1(D_1) & & \\
   & \ddots & \\
   & & C_1(D_k)
    \end{pmatrix}, \ 
    C_2(t) =
    \begin{pmatrix}
    C_2(D_1) & & \\
   & \ddots & \\
   & & C_2(D_k)
    \end{pmatrix},
    \]
    where
    \[
    C_1(D_a) = 
    \begin{pmatrix}
    0 & \mymathbb{1}_{n_a -1}\\
    0 & 0
    \end{pmatrix}, \
    C_2(D_a) = 
    \begin{pmatrix}
    0 & 0\\
    0 & \mymathbb{0}_{n_a -1}
    \end{pmatrix}
    \]
    for each row $a = 1, \dots, k$ of the Young diagram $D$, and such that the matrix $R(t)$ is normal in the sense of \cite[Definition 1 and 2]{zelenkoli2009}. Furthermore, this symplectic moving frame is unique up to an change of basis that orthogonally preserves the Young diagram $D$, as stated in \cite[Theorem 1]{zelenkoli2009}.
\end{theorem}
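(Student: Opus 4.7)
The plan is to follow Zelenko and Li's approach based on curves in the Lagrangian Grassmannian. To the normal extremal $\lambda(t)$ one associates its \emph{Jacobi curve}, a curve of Lagrangian subspaces in $\T_{\lambda(0)}(\T^*(M))$ obtained by pulling the vertical distribution $\Kern \diff_{\lambda(t)} \pi$ back along the Hamiltonian flow $\me^{t \overrightarrow{H}}$. Under the identification of Jacobi fields with constant vectors in $\T_{\lambda(0)}(\T^*(M))$ via this flow, a symplectic moving frame satisfying the vertical condition $\diff_{\lambda(t)} \pi[E_i] = 0$ corresponds to a Darboux basis adapted to this curve. The flag of osculating subspaces defined by successive derivatives of the Jacobi curve encodes the growth of Jacobi fields, and ampleness together with equiregularity ensure that this flag exhausts $\T_{\lambda(0)}(\T^*(M))$ with constant-dimensional jumps whose sizes are the row integers $n_1 \le \dots \le n_k$ of $D$.

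The next step is to construct the frame row by row in a manner dictated by the flag. For each row $a$ of $D$ one selects a vector $E_{a, 1}$ inside the initial Lagrangian subspace that is transverse to the lower parts of the flag, and then generates the remaining $E_{a, i}$ and $F_{a, i}$ by a controlled recursion involving Lie differentiation along $\overrightarrow{H}$ and the symplectic pairing. The block-diagonal structure of $C_1(t)$ and $C_2(t)$ is then forced: within a single row, differentiation shifts the index $i$, producing the superdiagonal identity block in $C_1(D_a)$, while the sparsity of $C_2(D_a)$ reflects the very restricted way in which $\dot{F}$ can project onto the $E$-subspace at top order within each row.

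The most delicate step is to exploit the remaining gauge freedom in order to put the curvature matrix $R(t)$ in normal form. After the first construction, the frame is only determined up to linear changes of basis that preserve the Young diagram filtration; Zelenko and Li show that one can use this gauge group to annihilate certain structured entries of $R(t)$, and the resulting conditions coincide with the normality conditions in their Definitions 1 and 2 of \cite{zelenkoli2009}. The main obstacle I foresee is precisely this last normalisation: setting up the intrinsic derivative along the Jacobi curve, checking that the admissible transformations act on the targeted entries of $R(t)$ in a way that allows one to kill them, and verifying that once this is done the only remaining freedom is the orthogonal action on the rows of $D$ of equal length, which then yields the uniqueness clause.
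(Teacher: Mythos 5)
The paper does not prove this statement: it is imported verbatim from \cite{zelenkoli2009} (the text explicitly says ``The following statement provides a summary of the structure of this canonical frame'' and attributes both the construction and the uniqueness clause to Theorem 1 of that reference), so there is no in-paper argument to compare yours against. Measured against the actual proof in Zelenko--Li, your outline identifies the right ingredients in the right order: the Jacobi curve $t \mapsto \me^{-t\overrightarrow{H}}_{*}\bigl(\Kern \diff_{\lambda(t)}\pi\bigr)$ in the Lagrangian Grassmannian of $\T_{\lambda(0)}(\T^*(M))$, the osculating flag whose constant-dimensional jumps are governed by ampleness and equiregularity and encode the Young diagram, the row-by-row generation of the frame by differentiation along $\overrightarrow{H}$ (which is indeed what forces the superdiagonal identity blocks in $C_1(D_a)$ and the sparsity of $C_2(D_a)$), and the use of the residual gauge group to normalise $R(t)$, with the orthogonal stabiliser of the diagram giving the uniqueness statement.

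That said, what you have written is a roadmap, not a proof. Every step that carries actual content is deferred: you do not construct the recursion producing $E_{a,i}$ and $F_{a,i}$, you do not verify that the resulting frame is Darboux, you do not write down the normality conditions on $R(t)$ or show that the gauge action is transitive enough to achieve them, and you explicitly flag the normalisation step as an unresolved ``obstacle.'' For a cited foundational result this is an acceptable level of detail in context --- the paper itself does no more --- but be aware that as a standalone proof it establishes nothing; if you intend to rely on the block structure of $C_1$, $C_2$ and the normality of $R$ in computations (as the paper does for $\mathrm{SU}(2)$ and $\mathrm{SL}(2)$ in \cref{prop:zelenkoliSU2} and \cref{prop:zelenkoliSL2}), the honest move is to cite \cite{zelenkoli2009} rather than to present this sketch as a derivation.
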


The study of Jacobi fields will be crucial for understanding the conjugate locus.

\subsection{Conjugate locus in sub-Riemannian geometry}

The objective of this paper is to investigate the singularities of the exponential map for specific sub-Riemannian structures.

\begin{definition}
	The sub-Riemannian exponential map at $q \in M$ is the map
	\[
	\exp_q : \A_q \to M : \lambda \mapsto \pi(\me^{\overrightarrow{H}}(\lambda)),
	\]
	where $\A_q \subseteq \T^*_q(M)$ is the open set of covectors such that the corresponding solution of \cref{eq:hamiltoneqs} is defined on the whole interval $\interval{0}{1}$.
\end{definition}

The sub-Riemannian exponential map $\exp_q$ is smooth. If $\lambda : \interval{0}{T} \to \T^*(M)$ is the normal extremal that satisfies the initial condition $(q, \lambda_0) \in \T^*(M)$, then the corresponding normal extremal path $\gamma(t) = \pi(\lambda(t))$ by definition satisfies $\gamma(t) = \exp_q(t \lambda_0)$ for all $t \in \interval{0}{T}$. If $M$ is complete for the sub-Riemannian distance, then $\A_q = \T^*_q(M)$ and if in addition there are no stricly abnormal length minimisers, the exponential map $\exp_q$ is surjective. Contrary to the Riemannian case, the sub-Riemannian exponential map is not necessarily a diffeomorphism of a small ball in $\T^*_q(M)$ onto a small geodesic ball in $M$. In fact, $\Ima (\diff_0 \exp_q) = \mathcal{D}_q$ and $\exp_q$ is a local diffeomorphism at $0 \in \T^*_q(M)$ if and only if $\mathcal{D}_q = \T^*_q(M)$.

The set of singularities of $\exp_q$, as outlined in \cref{def:goodsingularity}, is known as the conjugate locus of a sub-Riemannian structure.

\begin{definition}
\label{subsec:conjugatelocus}
The conjugate locus at $q \in M$ is the set
\[
\mathrm{Conj}_q(M) := \left\{\lambda_0 \in \mathscr{A}_q \mid \mathrm{Ker}(\diff_{\lambda_0} \exp_q) \neq \{0\} \right\} \subseteq \T^*_q(M).
\]
The elements of $\mathrm{Conj}_q(M)$ are called conjugate covectors. We say that they are conjugate to the point $q$. The order of $\lambda_0 \in \mathrm{Conj}_q(M)$ is the dimension of $\mathrm{Ker}(\diff_{\lambda_0} \exp_q)$.
\end{definition}

It is important to note that in contrast to the usual approach, the conjugate locus to a point $q \in M$ is considered here as a subset of $\T^*_q(M)$ and not as the image in $M$ of the singularities of the exponential map.

The following statement clarifies the connection between conjugate covectors and Jacobi fields.

\begin{proposition}
    \label{prop:jacobivsconjpt}
	Let $\gamma : \interval{0}{1} \to M$ be a normal geodesic with initial covector $\lambda_0 \in \A_q$ and such that $\gamma(0) = q \in M$. The covector $\lambda_0$ is a critical point for $\exp_q$ if and only if there exists a non trivial Jacobi field $\mathcal{J}$ along $\gamma$ such that $\diff_{\lambda(0)}\pi[\mathcal{J}(0)] = 0$ and $\diff_{\lambda(1)}\pi[\mathcal{J}(1)] = 0$.
\end{proposition}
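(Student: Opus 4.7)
The plan is to unpack the definition of $\exp_q$ as the composition of the time-one Hamiltonian flow with the bundle projection, and then identify the kernel of its differential with the space of Jacobi fields that are vertical at $t=0$ and $t=1$. Since $\exp_q = \pi \circ \me^{\overrightarrow{H}}$ restricted to $\T^*_q(M) \subseteq \T^*(M)$, the chain rule gives
\[
\diff_{\lambda_0} \exp_q = \diff_{\lambda(1)} \pi \circ \diff_{\lambda_0} \me^{\overrightarrow{H}} \big|_{\T_{\lambda_0}(\T^*_q(M))}.
\]
The crucial identification is that a tangent vector $v \in \T_{\lambda_0}(\T^*(M))$ lies in $\T_{\lambda_0}(\T^*_q(M))$ if and only if it is vertical, i.e.\ $\diff_{\lambda_0} \pi [v] = 0$.

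For the forward direction, I would take a nonzero $v \in \mathrm{Ker}(\diff_{\lambda_0} \exp_q)$, which by the identification above is a nonzero vertical vector satisfying $\diff_{\lambda(1)}\pi[\diff_{\lambda_0}\me^{\overrightarrow{H}}[v]] = 0$. Then I define the vector field
\[
\mathcal{J}(t) := \diff_{\lambda_0} \me^{t \overrightarrow{H}}[v],
\]
which is precisely a Jacobi field along $\lambda$ in the sense of the definition recalled in the excerpt (take $\Gamma(s)$ with $\Gamma(0) = \lambda_0$ and $\dot{\Gamma}(0) = v$). By construction, $\diff_{\lambda(0)}\pi[\mathcal{J}(0)] = \diff_{\lambda_0}\pi[v] = 0$ and $\diff_{\lambda(1)}\pi[\mathcal{J}(1)] = 0$, and $\mathcal{J}$ is nontrivial because $\mathcal{J}(0) = v \neq 0$.

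For the converse, suppose $\mathcal{J}$ is a nontrivial Jacobi field along $\lambda$ with $\diff_{\lambda(0)}\pi[\mathcal{J}(0)] = 0$ and $\diff_{\lambda(1)}\pi[\mathcal{J}(1)] = 0$. Since $\mathcal{J}$ satisfies the linear Lie equation \cref{lieeq}, it is uniquely determined by its initial condition and in fact $\mathcal{J}(t) = \diff_{\lambda_0}\me^{t\overrightarrow{H}}[\mathcal{J}(0)]$. Nontriviality forces $\mathcal{J}(0) \neq 0$, while the vertical condition at $t=0$ says $\mathcal{J}(0) \in \T_{\lambda_0}(\T^*_q(M))$. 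The vanishing at $t = 1$ then reads $\diff_{\lambda_0}\exp_q[\mathcal{J}(0)] = 0$, so $\mathcal{J}(0) \in \mathrm{Ker}(\diff_{\lambda_0}\exp_q)$ and $\lambda_0$ is a critical point.

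The argument is essentially a dictionary between Jacobi fields and tangent vectors transported by the Hamiltonian flow, so the only subtlety is the bookkeeping: verifying that tangent vectors to the fiber $\T^*_q(M)$ are exactly the vertical vectors, and noting that the nontriviality of $\mathcal{J}$ as a solution of the linear ODE forces $\mathcal{J}(0) \neq 0$. I expect no real obstacle beyond these identifications.
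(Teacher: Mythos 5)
Your proof is correct and is exactly the standard dictionary the paper relies on: it states this proposition without proof and immediately uses the same identifications (fiber tangent vectors are the vertical ones, and $\diff_{\lambda_0}\exp_q = \diff_{\lambda(1)}\pi \circ \diff_{\lambda_0}\me^{\overrightarrow{H}}$ on them) to derive \cref{eq:kernelconjugatelocus} and \cref{prop:imageexp}. Your write-up simply makes explicit the argument the paper leaves implicit, including the two points that actually need checking, namely that $\T_{\lambda_0}(\T^*_q(M)) = \Kern(\diff_{\lambda_0}\pi)$ and that nontriviality of a Jacobi field forces $\mathcal{J}(0) \neq 0$ by uniqueness for the linear flow.
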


\cref{prop:jacobivsconjpt}, when expressed in terms of a symplectic frame, implies that a covector $\lambda_0 \in \T^*_q(M)$ is conjugate to $q \in M$ if there exists a non-trivial vector field $\mathcal{J}(t) = \sum_{i = 1}^n p_i(t) E_i(t) + x_i(t) F_i(t)$ along $\lambda(t)$ such that $(p(t), x(t))$ satisfies \cref{eq:jacobifields} and $x(0) = x(1) = 0$. 

When a symplectic moving frame is fixed along the extremal, we can make the following identifications. On one hand, the elements $F_1(t), \dots, F_n(t)$ of the symplectic frame can be interpreted as a basis for $\T_{\gamma(t)}(M)$ through the map $\diff_{\lambda(t)}\pi : \T_{\lambda(t)}(\T^*(M)) \to \T_{\gamma(t)}(M)$. On the other hand, the elements $E_1(t), \dots, E_n(t)$ can be interpreted as a basis for $\T_{\lambda(t)}(\T^*_{\gamma(t)}(M))$ through the differential of the inclusion $\T_{\gamma(t)}^*(M) \to \T^*(M) : \lambda \mapsto (\gamma(t), \lambda)$.

The kernel and the image of the differential of the exponential map can then also be represented using coordinates (see also \cite[Remark 22]{borza2021}):
\begin{equation}
    \label{eq:kernelconjugatelocus}
    \mathrm{Ker}(\diff_{\lambda_0} \exp_q) = \left\{ \sum_{i = 1}^n p_i(0) E_i(0) \mid (p(t), x(t)) \text{ satisfies } \cref{eq:jacobifields} \text{ and } x(0) = x(1) = 0 \right\},
\end{equation}
and
\begin{equation}
    \label{prop:imageexp}
    \mathrm{Im}(\diff_{\lambda_0} \exp_q) = \left\{ \sum_{i = 1}^n x_i(1) F_i(1) \mid (p(t), x(t)) \text{ satisfies } \cref{eq:jacobifields} \text{ with } x(0) = 0 \right\}.
\end{equation}

To conclude this section, we present an important result, which is also given in terms of the symplectic moving frame, that generalizes the regularity property of the Riemannian exponential map as investigated by Warner in \cite[Section 2. (R2)]{warner1965}.

\begin{proposition}[{Regularity property of the sub-Riemannian exponential map \cite[Proposition 21]{borza2021}}]
\label{prop:kernelexp}
The map
\[
\Kern(\diff_{\lambda_{0}} \exp_q) \to \T_{\exp_q(\lambda_{0})}(M)/\diff_{\lambda_{0}} \exp_q(\T_{\lambda_{0}}(\T^*_q(M)))
\]
sending $\sum_{i = 1}^n p_i(0) E_i(0)$ to $\sum_{i = 1}^n p_i(1) F_i(1) + \diff_{\lambda_{0}} \exp_q(\T_{\lambda_{0}}(\T^*_q(M)))$ is a linear isomorphism.
\end{proposition}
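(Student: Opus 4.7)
The plan is to leverage the symplectic structure of $\T^*(M)$, which is preserved by the Hamiltonian flow, together with the coordinate expressions \cref{eq:kernelconjugatelocus} and \cref{prop:imageexp} for the kernel and image of $\diff_{\lambda_0}\exp_q$. The Darboux property $\sigma(E_i, F_j) = \delta_{ij}$, $\sigma(E_i, E_j) = 0 = \sigma(F_i, F_j)$ of the symplectic frame translates, for two Jacobi fields $\mathcal J = \sum_i p_i E_i + x_i F_i$ and $\tilde{\mathcal J} = \sum_i \tilde p_i E_i + \tilde x_i F_i$ along $\lambda(t)$, into
\[
\sigma(\mathcal J(t), \tilde{\mathcal J}(t)) = p(t) \cdot \tilde x(t) - x(t) \cdot \tilde p(t),
\]
and this pairing is independent of $t$, since each Jacobi field is the image of its initial value under the symplectomorphism $\diff_{\lambda_0}\me^{t\overrightarrow{H}}$.

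Well-definedness and linearity of the stated map are straightforward: the kernel element $\sum_i p_i(0) E_i(0)$ determines the coefficients $p_i(0)$, and together with the initial condition $x(0) = 0$ these determine a unique Jacobi field, hence the coefficients $p_i(1)$. A rank-nullity argument applied to the linear map $\R^n \to \R^n$, $p(0) \mapsto x(1)$ --- whose kernel and image are, by \cref{eq:kernelconjugatelocus} and \cref{prop:imageexp}, the coordinate descriptions of $\Kern(\diff_{\lambda_0}\exp_q)$ and $\Ima(\diff_{\lambda_0}\exp_q)$ --- then shows that the two sides of the claimed isomorphism have equal dimension, so it suffices to prove injectivity.

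For injectivity, suppose that a kernel element $\sum_i p_i(0) E_i(0)$ is sent to zero, i.e., that $\sum_i p_i(1) F_i(1) \in \Ima(\diff_{\lambda_0}\exp_q)$. By \cref{prop:imageexp}, there is a Jacobi field $\tilde{\mathcal J}$ with $\tilde x(0) = 0$ and, after matching coefficients in the basis $F_i(1)$, $\tilde x(1) = p(1)$. Using $x(0) = x(1) = \tilde x(0) = 0$, the conservation of the symplectic pairing between $\mathcal J$ and $\tilde{\mathcal J}$ gives
\[
0 = \sigma(\mathcal J(0), \tilde{\mathcal J}(0)) = \sigma(\mathcal J(1), \tilde{\mathcal J}(1)) = p(1) \cdot p(1),
\]
so $p(1) = 0$. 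Combined with $x(1) = 0$ this forces $\mathcal J(1) = 0$, and since $\mathcal J$ is the push-forward by the diffeomorphism $\diff_{\lambda_0}\me^{\overrightarrow{H}}$ of $\mathcal J(0)$, we conclude $\mathcal J \equiv 0$, whence $p(0) = 0$. The geometric heart of the argument is that the vertical subspace $\T_{\lambda_0}(\T^*_q(M))$ is Lagrangian and remains so under the Hamiltonian flow, and the injectivity step is exactly the self-orthogonality of its image at time $1$; I do not expect a genuine technical obstacle beyond verifying the Darboux formula for $\sigma$ and the symplecticity of the flow generated by \cref{eq:hamiltoneqs}.
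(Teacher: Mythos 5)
The paper does not actually prove this proposition; it is imported verbatim from \cite[Proposition 21]{borza2021}, so there is no in-text argument to compare against. Your proof is correct and is essentially the standard one for this kind of regularity statement (going back to Warner's property (R2) in the Riemannian case): rank--nullity applied to $p(0)\mapsto x(1)$ gives equality of dimensions, and the time-invariance of the symplectic pairing $p\cdot\tilde x - x\cdot\tilde p$ between two Jacobi fields — equivalently, the fact that the flow carries the vertical Lagrangian subspace to a Lagrangian subspace — yields injectivity. The only steps you leave implicit, namely that $\sigma(\mathcal J(t),\tilde{\mathcal J}(t))$ is constant and that the Darboux relations hold at every $t$, are immediate from the paper's definition of a symplectic moving frame and from the symplecticity of $\me^{t\overrightarrow{H}}$ (or, if one prefers, by differentiating the pairing using \cref{eq:jacobifields} and the symmetry of $R$ and $C_2$), so there is no gap.
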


We are prepared to now analyse specific sub-Riemannian structures: the $\alpha$-Grushin plane, the three-dimensional special unitary group, and the three dimension special linear group.

\section{The \texorpdfstring{$\alpha$}{α}-Grushin plane}

\label{sec:agrushin}

For $\alpha \geq 1$, the $\alpha$-Grushin plane $\mathds{G}_\alpha$ is the sub-Riemannian structure on $\mathds{R}^2$ generated by the global vector fields $X = \partial_x$ and $Y_\alpha = |x|^\alpha \partial_y$. The geometry of $\mathds{G}_\alpha$ was studied for example in \cite{ligrushin} and in \cite{borza2022}. The $\alpha$-Grushin plane is ideal, i.e., it does not contain any non-trivial abnormal geodesics (see \cite[Lemma 4.2.1.]{borzathesis}). It is worth noting that this sub-Riemmanian structure is rank-varying: the dimension of $\mathcal{D}_{(x, y)} := \mathrm{span}\left\{ X(x, y), Y_\alpha(x, y) \right\}$ is 2 when $x \neq 0$ and 1 otherwise. The sub-Riemannian Hamiltonian for this structure is given by
\[
H : \mathrm{T}^*(\mathds{G}_\alpha) \to \mathds{R} : (u \diff x|_{(x, y)} + v \diff y|_{(x, y)}, x, y) \mapsto \frac{1}{2}(u^2 + v^2 x^{2 \alpha}).
\]

A normal extremal $\lambda(t) = (x(t), y(y), u(t) \diff x|_{\gamma(t)} + v(t) \diff y|_{\gamma(t)}) \in \T^*(\G_\alpha)$  is thus characterised by the Hamiltonian system \cref{eq:hamiltoneqs}
\begin{equation}
    \label{eq:ODEagrushin}
    \left\{
    \begin{aligned}
    \dot{u}(t) ={}& - \alpha v(t)^2 x(t)^{2(\alpha - 1)} x(t) \\
    \dot{v}(t) ={}& 0 \\
    \dot{x}(t) ={}& u(t) \\
    \dot{y}(t) ={}& v(t) x(t)^{2 \alpha}.
    \end{aligned}
    \right.
\end{equation}
These equations can be integrated with the help of special functions, known as generalised trigonometric functions. Let us define the $\alpha$-sine function as the solution of the following ordinary differential equation
\begin{equation}
\label{eq:sina}
\left\{
\begin{aligned}
    &f''(t) = - \alpha f(t)^{2 (\alpha - 1)} f(t)  \\
    &f(0) = 0, \ \ f'(0) = 1.
\end{aligned}
\right.
\end{equation}
It can be shown that the solution to this differential equation is periodic with period $2 \pi_\alpha$, where the constant $\pi_\alpha$ is given by
\[ 
\pi_{\alpha} := 2 \int_0^1 \dfrac{1}{\sqrt{1 - t^{2 \alpha}}} \mathrm{d}t = \mathrm{B}\left( \dfrac{1}{2}, 1 - \dfrac{1}{2 \alpha} \right).
\]
Here the function $\mathrm{B}(\cdot, \cdot)$ stands for the complete beta function. The solution to $\cref{eq:sina}$, the $\alpha$-sine, will be denoted by $\sin_\alpha$, while the $\alpha$-cosine function is defined as $\cos_\alpha := \sin_\alpha'$. It should be noted that these functions satisfy the following $\alpha$-trigonometric identity:
\begin{equation}
    \label{eq:propagrushin}
    \begin{aligned}
    & \sin_\alpha^{2 \alpha}(x) + \cos_\alpha^2(x) = 1, \text{ for all } x \in \R.
    \end{aligned}
\end{equation}

The general solution to \cref{eq:ODEagrushin} can be written in a relatively clear form using these special functions (see \cite[Theorem 15.]{borza2022}):
\begin{equation}
    \label{eq:agrushingeodesic}
    \left\{
    \begin{aligned}
    u(t) ={}& A \omega \cos_\alpha(\omega t + \phi) \\
    v(t) ={}& v_0 \\
    x(t) ={}& A \sin_\alpha(\omega t + \phi) \\
    y(t) ={}& y_0 + \frac{t \left(u_0^2 + v_0^2 x_0^{2 \alpha}\right) + u_0 x_0 -u(t) x(t)}{v_0 (\alpha+1)}
    \end{aligned}
    \right.
\end{equation}
where $A$, $\omega$, and $\phi$ are uniquely determined by the initial condition $\lambda(0) = (q, \lambda_0) = (x_0, y_0, u_0 \diff x|_{q} + v_0 \diff y|_{q}) \in \T^*(\G_\alpha)$:
\begin{equation}
\label{eq:initialconditions}
		\begin{gathered}
			A^2 \omega^2 = u_0^2 + v_0^2 x_0^{2 \alpha}, \ \ \omega^2 = v_0^2 A^{2(\alpha - 1)}, \\ 
			x_0 = A \sin_\alpha(\phi) \text{ and } u_0 = A \omega \cos_\alpha(\phi).
		\end{gathered}
\end{equation}
To examine the exponential map of $\G_\alpha$ in the neighbourhood of singularities, it is essential to study the Jacobi equation, as explained in \cref{subsec:jacobifields}. 

Since the structure is rank-varying, it is not possible to construct the Zelenko-Li frame that was mentioned in \cref{zelenkoli}. Therefore, the specific (cartesian) symplectic moving frame along $\lambda(t)$ on the cotangent bundle $\T^*(\G_\alpha)$
\begin{equation}
    \label{eq:agrushinmovingframe}
    (E_a, E_b, F_a, F_b) = (\partial_u, \partial_v, \partial_x, \partial_y)
\end{equation}
will be chosen moving forward.

With respect to this frame, the linear map 
\begin{equation*}
    \diff_{(p, \lambda_0)} \me^{t \overrightarrow{H}} : \mathrm{T}_{(p, \lambda_0)}(\mathrm{T}^*(\mathds{G}_\alpha)) \to \mathrm{T}_{\lambda(t)}(\mathrm{T}^*(\mathds{G}_\alpha))
\end{equation*}
is simply the Jacobian matrix of the map $(u_0, v_0, x_0, y_0) \mapsto (u(t), v(t), x(t), y(t))$. The partial derivatives of $x(t)$ with respect to the initial conditions $u_0, v_0, x_0, y_0$ can be computed by using the relations \cref{eq:initialconditions} and the properties of the $\alpha$-trigonometric functions, using the abuse of notation $H := H(q, \lambda_0)$:
\begin{equation}
    \label{eq:dx(t)}
    \left\{
    \begin{aligned}
    \partial_{u_0} x(t) ={}& \frac{((\alpha-1) t u_0-x_0) u(t) + u_0 x(t)}{2 H \alpha} \\
    \partial_{v_0} x(t) ={}& \frac{\left(t \left(\alpha \left(2 H-u_0^2\right) + u_0^2\right) + u_0 x_0\right) u(t) - u_0^2 x(t)}{2 H \alpha v_0} \\
    \partial_{x_0} x(t) ={}& \frac{\left((\alpha - 1) t \left(2 H - u_0^2\right) + u_0 x_0\right) u(t) +\left(2 H - u_0^2\right) x(t)}{2 H x_0} \\
    \partial_{y_0} x(t) ={}& 0
    \end{aligned}.
    \right.
\end{equation}
Because of \cref{eq:ODEagrushin}, the partial derivatives of $u(t)$ with respect to the initial conditions can be obtained by considering the time derivatives of  \cref{eq:dx(t)}.
\begin{equation}
    \label{eq:du(t)}
    \left\{
    \begin{aligned}
    \partial_{u_0} u(t) ={}& \frac{\alpha u_0 u(t) + ((\alpha - 1) t u_0 - x_0) \dot{u}(t)}{2 H \alpha} \\
    \partial_{v_0} u(t) ={}& \frac{\alpha \left(2 H-u_0^2\right) u(t) + \left(t \left(\alpha \left(2 H-u_0^2\right)+u_0^2\right)+u_0 x_0\right) \dot{u}(t)}{2 \alpha H v_0} \\
    \partial_{x_0} u(t) ={}& \frac{\alpha \left(2 H-u_0^2\right) u(t)+ \left((\alpha-1) t \left(2 H-u_0^2\right)+u_0 x_0\right)\dot{u}(t)}{2 H x_0} \\
    \partial_{y_0} u(t) ={}& 0
    \end{aligned}.
    \right.
\end{equation}
The partial derivatives of $y(t)$, which are not explicitly written here, can be obtained as a combination of \cref{eq:dx(t)} and \cref{eq:du(t)} by considering the last identity of \cref{eq:agrushingeodesic}.

The differential system \cref{eq:noncanonicalmovingframe} characterising Jacobi fields in $\mathds{G}_\alpha$ can be written with respect to the frame \cref{eq:agrushinmovingframe}. However, it is important to note again that this frame is non-canonical, as the sub-Riemannian structure is rank-varying, making it impossible for geodesics that cross the singular region to be ample and equiregular and therefore impossible to apply \cref{zelenkoli}.

\begin{proposition}
    Let $\lambda(t)$ denote the normal extremal of $\G_\alpha$ with initial condition $(q, \lambda_0) = (x_0, y_0, u_0 \diff x|_{q} + v_0 \diff y|_{q}) \in \T^*(\G_\alpha)$. If $(E_a, E_b, F_a, F_b)$ denotes the symplectic frame along $\lambda(t)$ given by $(\partial_u, \partial_v, \partial_x, \partial_y)$, then the vector field $\mathcal{J}(t) = \sum_{k \in \{a, b\}} p_k(t) E_k(t) + x_k(t) F_k(t)$ is a Jacobi field along $\lambda(t)$ if and only if
    \begin{equation}
    \label{jacobifieldsagrushin}
    \begin{pmatrix}
			\dot{p}_a(t) \\
			\dot{p}_b(t) \\
            \dot{x}_a(t) \\
			\dot{x}_b(t)
    \end{pmatrix}
    =
    \begin{pmatrix}
			-C_1(t) & -R(t) \\
			C_2(t) & C_1(t)\tran
    \end{pmatrix}
    \begin{pmatrix}
			p_a(t) \\
			p_b(t) \\
            x_a(t) \\
			x_b(t)
    \end{pmatrix},
    \end{equation}
    where
    \[
    C_1(t) = \begin{pmatrix}
			0 & 2 \alpha v_0 x(t)^{2(\alpha - 1)} x(t) \\
			0 & 0 
    \end{pmatrix}, \
    C_2(t) = \begin{pmatrix}
			1 & 0 \\
			0 & x^{2 \alpha}(t)
    \end{pmatrix},
    \]
    \[
    R(t) = \begin{pmatrix}
			\alpha (2 \alpha - 1) v_0^2 x(t)^{2(\alpha - 1)}  & 0 \\
			0 & 0
    \end{pmatrix}.
    \]
\end{proposition}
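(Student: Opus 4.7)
The plan is to apply the general machinery from \cref{subsec:jacobifields} directly: given any symplectic moving frame along $\lambda(t)$, the structural equation \cref{eq:noncanonicalmovingframe} identifies the matrices $C_1$, $C_2$, $R$ from the Lie derivatives of the frame elements along $\overrightarrow{H}$, after which \cref{eq:jacobifields} yields the Jacobi equation verbatim. Because the proposed frame consists of globally defined coordinate vector fields, each Lie derivative reduces to a Lie bracket $[\overrightarrow{H}, \cdot]$, so the whole proof is essentially a bookkeeping exercise on $\T^*(\G_\alpha)$.

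First I would check that $(E_a, E_b, F_a, F_b) = (\partial_u, \partial_v, \partial_x, \partial_y)$ is indeed a symplectic moving frame. In the Darboux coordinates $(x, y, u, v)$ the canonical symplectic form reads $\sigma = \diff u \wedge \diff x + \diff v \wedge \diff y$, so the only non-zero pairings are $\sigma(E_a, F_a) = \sigma(E_b, F_b) = 1$, while $\diff \pi[\partial_u] = \diff \pi[\partial_v] = 0$ is immediate. Next, from $H = \frac{1}{2}(u^2 + v^2 x^{2\alpha})$ I would write
\[
\overrightarrow{H} = u\, \partial_x + v x^{2\alpha}\, \partial_y - \alpha v^2 x^{2\alpha - 1}\, \partial_u
\]
and compute the four brackets $[\overrightarrow{H}, \partial_u]$, $[\overrightarrow{H}, \partial_v]$, $[\overrightarrow{H}, \partial_x]$, $[\overrightarrow{H}, \partial_y]$ via the component-wise rule $[\overrightarrow{H}, \partial_k]^j = - \partial_k \overrightarrow{H}^j$. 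Restricting to $\lambda(t)$, where $v = v_0$ and $x = x(t)$, the four identities reduce to $\dot{E}_a = - F_a$, $\dot{E}_b = 2\alpha v_0 x(t)^{2\alpha - 1} E_a - x(t)^{2\alpha} F_b$, $\dot{F}_a = \alpha (2\alpha - 1) v_0^2 x(t)^{2\alpha - 2} E_a - 2\alpha v_0 x(t)^{2\alpha - 1} F_b$, and $\dot{F}_b = 0$.

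To conclude, I would read off the entries of $C_1(t)$, $C_2(t)$, $R(t)$ block by block from \cref{eq:noncanonicalmovingframe}, using the symmetry of $C_2$ and $R$ and the inequality $C_2 \geq 0$ as built-in consistency checks, and then plug the result into \cref{eq:jacobifields} to obtain \cref{jacobifieldsagrushin}. The only genuine pitfall is notational: the $E$-block in \cref{eq:noncanonicalmovingframe} is governed by $C_1(t)\tran$ while the $F$-block uses $C_1(t)$ itself, so the off-diagonal entry of $C_1$ must be extracted from the $\dot{E}_b$ identity rather than $\dot{E}_a$, and signs must be tracked carefully to avoid misidentifying $C_2$ with $-C_2$. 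Beyond that there is no conceptual obstacle. It is worth emphasising that the non-canonicity of the frame---a concession to the rank-varying nature of $\G_\alpha$, which prevents geodesics crossing $\{x = 0\}$ from being ample and equiregular and hence blocks the application of \cref{zelenkoli}---does not affect the argument, since \cref{eq:noncanonicalmovingframe} is valid for any symplectic moving frame.
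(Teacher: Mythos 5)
Your proposal is correct, and the computational core is genuinely slicker than the paper's. Both arguments reduce the proposition to computing the Lie derivatives $\dot{E}_a, \dot{E}_b, \dot{F}_a, \dot{F}_b$ and then reading off $C_1, C_2, R$ from \cref{eq:noncanonicalmovingframe} before invoking \cref{eq:jacobifields}; the difference is in how those Lie derivatives are obtained. The paper works straight from the definition \cref{eq:liederivatire}, pushing each frame element forward by $\diff\me^{-\epsilon\overrightarrow{H}}$ using the explicit Jacobian of the flow (the partial derivatives \cref{eq:dx(t)} and \cref{eq:du(t)} and the invariance of $A$, $\omega$, $\phi - \omega t$) and then differentiating in $\epsilon$ --- a computation the author himself calls long and tedious. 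You instead observe that since $\partial_u, \partial_v, \partial_x, \partial_y$ are globally defined coordinate vector fields, the curve-wise Lie derivative \cref{eq:liederivatire} is just the restriction of the bracket $[\overrightarrow{H},\cdot]$ to $\lambda(t)$, so everything follows from four one-line bracket computations with $\overrightarrow{H} = u\,\partial_x + vx^{2\alpha}\partial_y - \alpha v^2 x^{2\alpha-1}\partial_u$. I checked the four resulting identities and the block extraction against \cref{eq:noncanonicalmovingframe} (including the $C_1$ versus $C_1\tran$ placement you flag); they reproduce the stated $C_1$, $C_2$, $R$ exactly. The only caveats are cosmetic: you should verify, as you indicate, that $\sigma = \diff u\wedge\diff x + \diff v\wedge\diff y$ is the convention consistent with the sign of $\overrightarrow{H}$ used in \cref{eq:ODEagrushin}, and for non-integer $\alpha$ the powers of $x$ should be read as $|x|^{2(\alpha-1)}x$ in the paper's notation; neither affects the result. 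Your route buys brevity and transparency here; the paper's route has the side benefit that the explicit flow differentials \cref{eq:dx(t)}--\cref{eq:du(t)} are needed again later (e.g.\ in \cref{prop:conjlocusGa}), so they are not wasted effort in context.
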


\begin{proof}
    Obtaining the result is a matter obtaining the Lie derivatives of the elements of the frame $(E_a, E_b, F_a, F_b)$ along $\lambda(t)$ in the direction of $\overrightarrow{H}$, as seen in \cref{subsec:jacobifields}, which is long and tedious computation. It can be done by noting that
\begin{equation*}
    \begin{aligned}
    A(u(t), v(t), x(t), y(t)) ={}& A(u_0, v_0, x_0, y_0) \\
    \omega(u(t), v(t), x(t), y(t)) ={}& \omega(u_0, v_0, x_0, y_0) \\
    \phi(u(t), v(t), x(t), y(t)) ={}& \omega(u_0, v_0, x_0, y_0) t + \phi(u_0, v_0, x_0, y_0)
    \end{aligned},
\end{equation*}
    where, for example, $A(u(t), v(t), x(t), y(t))$ denotes the value of $A$ for the initial conditions $u(t), v(t), x(t), y(t)$ from \cref{eq:initialconditions}. Thus, with the help of the expression of $\diff_{(p, \lambda_0)} \me^{t \overrightarrow{H}}$ in terms of \cref{eq:dx(t)} and \cref{eq:du(t)} and the properties of the functions \cref{eq:agrushingeodesic}, one can take the Lie derivative of $\partial_u$ in the direction of $\overrightarrow{H}$ as introduced in \cref{eq:liederivatire} and obtain
    \begin{align*}
        \dot{\partial}_u ={}& \frac{\diff}{\diff\epsilon}\Big|_{\epsilon=0} (\diff_{\lambda(t + \epsilon)} \me^{-\epsilon \overrightarrow{H}})[\partial_u (t + \epsilon)] \\
        ={}& \frac{\diff}{\diff\epsilon}\Big|_{\epsilon=0} \begin{pmatrix}
			\frac{\alpha u(t) u(t + \epsilon) - ((\alpha - 1) \epsilon u(t + \epsilon) + x(t + \epsilon)) u'(t)}{2 \alpha H} \\
			0 \\
            \frac{x(t) u(t + \epsilon) - ((\alpha-1) \epsilon u(t + \epsilon) + x(t + \epsilon)) u(t)}{2 \alpha H} \\
			\star
    \end{pmatrix} = \begin{pmatrix}
			0 \\
			0 \\
            -1 \\
			0
    \end{pmatrix}.
    \end{align*}
    The derivatives for the other elements of the symplectic frame are obtained in a similar fashion. The conclusion then follows with \cref{eq:jacobifields}.
\end{proof}

The explicit integration of the differential equation \cref{jacobifieldsagrushin} can be performed using $\alpha$-trigonometric functions. Writing the initial conditions of a vector field satisfying \cref{jacobifieldsagrushin} as $(p_a(0), p_b(0), x_a(0), x_b(0)) = (p_{a0}, p_{b0}, x_{a0}, x_{b0})$, we have that $p_b(t) = p_{b0}$ and the equation for $x_a(t)$ is
\[
\ddot{x}_a(t) + \alpha v_0 x(t)^{2(\alpha - 1)} \left[ 2 p_{b0}  x(t) + (2 \alpha - 1) v_0 x_a(t) \right] = 0,
\]
which can be integrated with the ansatz $x_a(t) = k_1 x(t) + (k_2 + k_3 t) u(t)$. After some computations, one can find that
\begin{align*}
    k_1 ={}& \frac{\alpha x_{a0} v_0^3 x_0^{2 \alpha}+ p_{a0} u_0 v_0 x_0 - p_{b0} u_0^2 x_0}{\alpha v_0 x_0 \left(u_0^2 + v_0^2 x_0^{2 \alpha}\right)}\\
    k_2 ={}& \frac{\alpha x_{a0} u_0 v_0 - p_{a0} v_0 x_0 + p_{b0} u_0 x_0}{\alpha v_0 \left(u_0^2 + v_0^2 x_0^{2 \alpha}\right)}\\
    k_{3} ={}& \frac{(\alpha - 1) v_0 k_1 + p_{b0}}{v_0}.
\end{align*}

The function $p_a(t)$ is obtained by differentiating $x_a(t)$, namely $p_a(t) = (k_1 + k_3) u(t) +(k_2 + k_3 t) \dot{u}(t)$. Finally, we can integrate the equation
\[
\dot{x}_b(t)= p_{b0} x(t)^{2 \alpha} + 2 \alpha v_0 x(t)^{2(\alpha - 1)} x(t) x_a(t)
\]
by noting that $\dot{y}(t) = v_0 x(t)^{2 \alpha}$, $\dot{u}(t)= - \alpha v_0^2 x(t)^{2(\alpha - 1)} x(t)$ by \cref{eq:ODEagrushin}, and that
\begin{align*}
    \int_0^t u(s) \dot{u}(s) \diff s ={} & \frac{1}{2} \left(u(t)^2-u_0^2\right); \\
    \int_0^t s u(s) \dot{u}(s) \diff s ={}& \frac{u_0 x_0 - \alpha t \left(u_0^2 + v_0^2 x_0^{2 \alpha}\right) + (\alpha + 1) t u(t)^2 - u(t) x(t)}{2 (\alpha+1)}.
\end{align*}

For the sake of completeness, the full expression for $x_b(t)$ should also be written:
\begin{align*}
    x_b(t) ={}& q_{b0} +  \left(\frac{p_{b0}}{v_0} + 2 \alpha k_1\right) \frac{t \left(u_0^2 + v_0^2 x_0^{2 \alpha}\right) + u_0 x_0 -u(t) x(t)}{v_0 (\alpha+1)} \\
    &- \frac{k_2}{v_0} \left(u(t)^2-u_0^2\right) - k_3 \frac{u_0 x_0 - \alpha t \left(u_0^2 + v_0^2 x_0^{2 \alpha}\right) + (\alpha + 1) t u(t)^2 - u(t) x(t)}{v_0 (\alpha+1)}.
\end{align*}

Let us introduce additional notation: $x_1, u_1, \dot{x}_1$ and $\dot{u}_1$ will mean $x(1), u(1), \dot{x}(1)$ and $\dot{u}(1)$ respectively. These variables therefore depend on the initial conditions $x_0, u_0, v_0$. Similarly, we will write $p_{a1}, p_{b1}, x_{a1}, x_{b1}$ for $p_a(1), p_b(1), x_a(1), x_b(1)$ when it comes to the coordinate representation of a Jacobi field at time $t = 1$.

\begin{proposition}
    \label{prop:singGa}
    The covector $\lambda_0 = u_0 \diff x|_q + v_0 \diff y|_q \in \T^*_q(\mathds{G}_\alpha)$ is a conjugate covector of $q = (x_0, y_0) \in \mathds{G}_\alpha$ with $H(\lambda_0) \neq 0$ if and only if 
    \begin{equation}
    \label{eq:conjugatelocusagrushin}
        u_1 (u_0 + x_0) - u_0 x_1 = 0 \text{ and } v_0 \neq 0.
    \end{equation}
    Furthermore, the conjugate vectors $\lambda_0 \in \mathrm{Conj}_q(\G_\alpha)$ are all of order one, and
    \begin{equation}
        \label{eq:kerneldexpagrushin}
        \Kern(\diff_{\lambda_0} \exp_q) = \vspan\left\{ v_0 x_0^{2 \alpha} \partial_u - u_0 \partial_v
    \right\}.
    \end{equation}
\end{proposition}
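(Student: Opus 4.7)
The approach is to apply the kernel formula \cref{eq:kernelconjugatelocus}: a covector $\lambda_0$ is conjugate to $q$ precisely when the Jacobi system \cref{jacobifieldsagrushin} admits a nontrivial solution with $x_a(0) = x_b(0) = 0$ and $x_a(1) = x_b(1) = 0$. Since the Jacobi equation has been explicitly integrated just above the proposition, the problem reduces to a linear algebra computation: substitute $x_{a0} = x_{b0} = 0$ (hence also $q_{b0} = 0$) into the closed-form expressions and read off the endpoint conditions as two linear equations in $(p_{a0}, p_{b0})$. The covector is conjugate iff the associated $2\times 2$ system has a nontrivial kernel, i.e. its determinant vanishes, and the dimension of that kernel is exactly the order of $\lambda_0$.

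First, substituting $x_{a0} = 0$ into the formulas for $k_1, k_2, k_3$ shows that $k_1$ and $k_2$ depend only on the combined linear form $L := p_{a0} v_0 - p_{b0} u_0$, while $k_3 = (\alpha-1)k_1 + p_{b0}/v_0$ introduces an independent $p_{b0}/v_0$ term. This already explains the assumption $v_0 \neq 0$: when $v_0 = 0$ the Hamiltonian equations reduce to $\dot u \equiv 0$ and the geodesic degenerates to the affine line $t \mapsto (x_0 + u_0 t, y_0)$, which has no conjugate point with $H \neq 0$; moreover the formulas above are singular at $v_0 = 0$. Assuming $v_0 \neq 0$, the endpoint equations $x_a(1) = 0$ and $x_b(1) = 0$, obtained from the explicit forms of $x_a(t)$ and $x_b(t)$, give a linear system that I would express as a $2 \times 2$ matrix applied to $(p_{a0}, p_{b0})$.

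I would then compute the determinant of that system. Using the $\alpha$-trigonometric identity \cref{eq:propagrushin}, the energy conservation $u(t)^2 + v_0^2 x(t)^{2\alpha} = 2H$, and the two antiderivative identities for $\int_0^t u \dot u$ and $\int_0^t s u \dot u$ displayed just above the proposition, I expect massive cancellation that collapses the determinant to a nonzero scalar multiple of $u_1(u_0 + x_0) - u_0 x_1$, proving \cref{eq:conjugatelocusagrushin}. This simplification is the main obstacle: it is the only non-routine step, and the bookkeeping between the terms involving $t \cdot u(t)$ (via $k_3 t$) and the antiderivative of $s u(s) \dot u(s)$ is where things could go wrong.

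Finally, assuming the conjugate condition holds, I would exhibit the explicit kernel vector. The candidate $(p_{a0}, p_{b0}) = (v_0 x_0^{2\alpha}, -u_0)$ satisfies $L = 2H$, so $k_1 = u_0/(\alpha v_0)$, $k_2 = -x_0/(\alpha v_0)$, $k_3 = -u_0/(\alpha v_0)$; substituting into $x_a(1) = k_1 x_1 + (k_2 + k_3) u_1$ gives $(u_0 x_1 - (u_0 + x_0) u_1)/(\alpha v_0)$, which vanishes precisely under \cref{eq:conjugatelocusagrushin}. A parallel substitution into $x_b(1)$, again using the integral identities, yields $0$. Since the $2 \times 2$ system has a one-dimensional kernel whenever the determinant vanishes, this simultaneously establishes that $\lambda_0$ has order one and that $\Kern(\diff_{\lambda_0}\exp_q)$ is spanned by $v_0 x_0^{2\alpha} \partial_u - u_0 \partial_v$, proving \cref{eq:kerneldexpagrushin}.
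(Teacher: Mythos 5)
Your proposal is correct and takes essentially the same route as the paper: both reduce the conjugacy condition, via \cref{eq:kernelconjugatelocus} and the explicitly integrated Jacobi fields with $x_{a0}=x_{b0}=0$, to a $2\times 2$ linear system in $(p_{a0},p_{b0})$ whose second row $(u_0,\,v_0x_0^{2\alpha})$ never vanishes (since $2H\neq 0$), which is what forces order one and identifies the kernel as $\vspan\{v_0x_0^{2\alpha}\partial_u-u_0\partial_v\}$. The one step you leave as ``expected cancellation'' does check out --- the determinant of the paper's endpoint matrix equals $-\bigl(u_1(u_0+x_0)-u_0x_1\bigr)/(\alpha v_0)$ --- whereas the paper sidesteps this single computation by a five-case row-reduction analysis (treating $u_0=0$, $x_0=0$, etc.\ separately and unifying the answers at the end), so your organization is marginally cleaner but not a genuinely different argument.
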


\begin{proof}
    By \cref{eq:kernelconjugatelocus}, we know that
    \[
    \Kern(\diff_{\lambda_0} \exp_q) = \left\{ p_{a0} E_1(0) + p_{20} E_2(0) \mid (p(t), x(t)) \text{ solves } \cref{jacobifieldsagrushin}, \ x(0) = x(1) = 0
    \right\}.
    \]
    From the explicit expression for the Jacobi fields of $\G_\alpha$, one can see that under the assumption that $x_{11} = x_{12} = 0$, and if $v_0 \neq 0$ we have
    \begin{equation}
    \label{agrushinx11x21}
    \begin{pmatrix}
			x_{11} \\
                u_1 x_{11} + v_0 x_{21}
    \end{pmatrix}
    =
    \begin{pmatrix}
			\frac{u_0 x_1 + u_1 ((\alpha - 1) u_0 - x_0)}{\alpha (u_0^2 + v_0^2 x_0^{2 \alpha})} & \tfrac{\alpha u_1 v_0^2 x_0^{2 \alpha} - u_0^2 x_1 + u_0^2 u_1 + u_0 u_1 x_0}{\alpha v_0 (u_0^2 + v_0^2 x_0^{2 \alpha})} \\
               u_0 & v_0 x_0^{2 \alpha}
    \end{pmatrix}
    \begin{pmatrix}
			p_{10} \\
                p_{20}
    \end{pmatrix}.
    \end{equation}
    
    \textbf{First case:} $v_0 = 0$.
    By continuity of the geodesic flow with respect to the initial conditions, this is equivalent to considering \cref{agrushinx11x21} when $v_0$ tends to $0$. This yields $x_{a1} = p_{b0}$, $x_{a1} = p_{b0}$, and there are thus no conjugate covector along such a geodesic. From now on therefore, we will assume that assume that $v_0 \neq 0$.
    
    \textbf{Second case:} $u_0 = 0$ (in particular $x_0 \neq 0$). The matrix appearing in \cref{agrushinx11x21} then becomes
    \[
    \begin{pmatrix}
			-\frac{u_1}{\alpha v_0^2 x_0^{2(\alpha - 1)} x_0} & \tfrac{u_1}{v_0} \\
               0 & v_0 x_0^{2 \alpha}
    \end{pmatrix}.
    \]
    It has a non-trivial kernel if and only if $u_1 = 0$, or equivalently $\sin_\alpha(\omega) = 0$ and $\cos_\alpha(\phi) = 0$, which means that in that case there exists a non-trivial Jacobi field if and only if $p_{b0} = 0$ and $p_{a0}$ is arbitrary, and then
    \[
    \Kern(\diff_{\lambda_0} \exp_q) = \vspan \left\{E_a(0) \right\}.
    \]

    \textbf{Third case:} $u_0 \neq 0$ and $u_0 x_1 + u_1 ((\alpha - 1) u_0 - x_0) = 0$. Since $H(\lambda(t)) \neq 0$ is constant, we have in particular that $u_1 \neq 0$. 
    
    The matrix in \cref{agrushinx11x21} reduces to
    \[
    \begin{pmatrix}
			0 & \tfrac{u_1}{v_0} \\
               u_1 & v_0 x_0^{2 \alpha}
    \end{pmatrix},
    \]
    which has a trivial kernel. Thus, this case does not produce any conjugate covector.

    \textbf{Fourth case:} $u_0 \neq 0$, $u_0 x_1 + u_1 ((\alpha - 1) u_0 - x_0) \neq 0$ and $x_0 = 0$. These conditions imply that $u_1 \neq 0$. This time, the matrix \cref{agrushinx11x21} simplifies to
    \[
    \begin{pmatrix}
			\frac{x_1 + (\alpha - 1) u_1}{\alpha u_0} & \tfrac{u_1 - x_1}{v_0} \\
               u_1 & 0
    \end{pmatrix}
    \]
    and there is a non-trivial kernel if and only if $u_1 = x_1$, and thus $x_1 \neq 0$. The corresponding non-trivial Jacobi fields along $\lambda(t)$ have initial conditions $p_{a0} = 0$ and arbitrary $p_{b0}$, and 
    \[
    \Kern(\diff_{\lambda_0} \exp_q) = \vspan \left\{E_b(0) \right\}.
    \] 

    \textbf{Fifth case:} $u_0 \neq 0$, $u_0 x_1 + u_1 ((\alpha - 1) u_0 - x_0) \neq 0$ and $x_0 \neq 0$. Here the matrix in \cref{agrushinx11x21} is similar to
    \[
    \begin{pmatrix}
			1 & \star \\
               0 & -\left(\frac{u_0^2 + v_0^2 x_0^{2 \alpha}}{v_0}\right)\frac{u_1 (u_0 + x_0) - u_0 x_1}{u_0 x_1 + u_1 ((\alpha - 1)u_0  - x_0)}
    \end{pmatrix}
    \]
    which yields a non-trivial kernel if and only if $u_1 (u_0 + x_0) - u_0 x_1 = 0$. In that case, that same matrix becomes
    \[
    \begin{pmatrix}
			1 & \frac{v_0 x_0^{2 \alpha}}{u_0} \\
               0 & 0
    \end{pmatrix}.
    \]
    The corresponding Jacobi fields have therefore initial conditions $p_{b0} \in \mathds{R}$ and $p_{a0} = -\frac{v_0 x_0^{2 \alpha}}{u_0} p_{b0}$, in which case the expression of the kernel is
    \[
    \Kern(\diff_{\lambda_0} \exp_q) = \vspan\left\{ v_0 x_0^{2 \alpha} \partial_u - u_0 \partial_v
    \right\}.
    \]
\end{proof}
The next step in our study of the singularities of the exponential map of $\G_\alpha$ is to prove that the equation \cref{eq:conjugatelocusagrushin} defines a submanifold of $\T^*_{(x_0, y_0)}(\G_\alpha)$ through the preimage theorem. In the language of Whitney, this means that conjugate covectors of $\exp_q$ are \textit{good singularities} (see \cref{def:goodsingularity}).

\begin{proposition}
    \label{prop:conjlocusGa}
    For all $q \in \mathds{G}_\alpha$, the conjugate locus $\mathrm{Conj}_q(\mathds{G}_\alpha)$ is a submanifold of $\mathrm{T}^*_q(\mathds{G}_\alpha)$ of codimension 1.
\end{proposition}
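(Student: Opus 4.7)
The plan is to apply the preimage theorem to a smooth defining function for $\mathrm{Conj}_q(\G_\alpha)$. By \cref{prop:singGa}, on the open subset $V := \{(u_0, v_0) \in \T^*_q(\G_\alpha) : v_0 \neq 0 \text{ and } H(\lambda_0) \neq 0\}$ we have $\mathrm{Conj}_q(\G_\alpha) = F^{-1}(0)$ where
\[
F(u_0, v_0) := u_1 (u_0 + x_0) - u_0 x_1,
\]
and $u_1 = u(1), x_1 = x(1)$ are smooth functions of $(u_0, v_0)$ through the Hamiltonian flow \cref{eq:agrushingeodesic}. Outside $V$, namely when $v_0 = 0$ or $H = 0$, \cref{prop:singGa} shows that no conjugate covector occurs, so the conjugate locus is contained in $V$. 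It remains to verify that $0$ is a regular value of $F$ on $V$, and the conclusion will follow directly from the preimage theorem since $V$ is two-dimensional.

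To show $\diff F \neq 0$ on $F^{-1}(0)$, I would compute
\[
\partial_{u_0} F = u_1 - x_1 + (u_0 + x_0)\, \partial_{u_0} u_1 - u_0\, \partial_{u_0} x_1, \qquad \partial_{v_0} F = (u_0 + x_0)\, \partial_{v_0} u_1 - u_0\, \partial_{v_0} x_1,
\]
by substituting the explicit partial derivatives from \cref{eq:dx(t)} and \cref{eq:du(t)} at $t = 1$. On the set $\{F = 0\}$ one may then exploit the identity $u_1 (u_0 + x_0) = u_0 x_1$ together with the $\alpha$-trigonometric identity \cref{eq:propagrushin} to simplify these expressions and show that at least one of the two partial derivatives does not vanish.

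The main obstacle is the algebraic complexity of the resulting formulas. To make the computation tractable, I would pass to the parametrisation $(A, \omega, \phi)$ given by \cref{eq:initialconditions}, in which $x_j = A \sin_\alpha(j\omega + \phi)$ and $u_j = A\omega \cos_\alpha(j\omega + \phi)$ for $j = 0, 1$. In these coordinates $F$ reduces to a clean expression in $\sin_\alpha$ and $\cos_\alpha$ of $\phi$ and $\omega + \phi$, and the regularity of $F$ on its zero set translates to the non-degeneracy of a trigonometric relation between $\omega$ and $\phi$. This non-degeneracy should ultimately reflect the order-one property of the conjugate covectors established in \cref{prop:singGa}: a higher-order zero of $F$ would correspond to a kernel of dimension two, which is ruled out. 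Translating this transversality back to the original coordinates $(u_0, v_0)$, via the smooth relations of \cref{eq:initialconditions}, then yields $\diff F \neq 0$ on $F^{-1}(0)$.
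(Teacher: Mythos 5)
Your setup matches the paper's exactly: both apply the preimage theorem to the defining function $f_{x_0}(u_0,v_0) = u_1(u_0+x_0) - u_0 x_1$ on the region where $v_0 \neq 0$ and $H \neq 0$, with the partial derivatives of $u_1$ and $x_1$ read off from \cref{eq:dx(t)} and \cref{eq:du(t)} at $t=1$. The problem is that you stop exactly where the mathematical content of the proposition begins: you assert that one "may then exploit the identity... to show that at least one of the two partial derivatives does not vanish," but this non-vanishing is the whole point, and you do not establish it. The paper does so by explicit computation, obtaining the closed forms \cref{eq:Tconjuagrushin} and \cref{eq:Tconjvagrushin} (with a case split on whether $u_0 + x_0 = 0$), and then observing that the specific linear combination $\tfrac{u_0}{v_0}\,\partial_{u_0} f_{x_0} + \partial_{v_0} f_{x_0}$ collapses to a multiple of $\dot u_1 = -\alpha v_0^2\, x_1^{2(\alpha-1)} x_1$, which can only vanish if $x_1 = 0$, and then \cref{eq:conjugatelocusagrushin} forces $H = 0$, a contradiction.

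The heuristic you offer in place of this computation — that "a higher-order zero of $F$ would correspond to a kernel of dimension two, which is ruled out" by \cref{prop:singGa} — is not valid. The order of a conjugate covector measures $\dim\Kern(\diff_{\lambda_0}\exp_q)$, whereas the regularity of $0$ as a value of $F$ concerns the first-order behaviour of the determinant of $\diff\exp_q$ along the fibre; these are independent. The one-dimensional model $f(x) = x^3$ already shows the implication fails: the kernel at the singularity is one-dimensional, yet $\det(\diff f) = 3x^2$ has $0$ as a critical value and the preimage-theorem argument breaks down. Indeed, whether order-one conjugate covectors always form a hypersurface is listed as an open problem in \cref{sec:conclusion} of this very paper, so it cannot be invoked as a general principle. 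Your suggested passage to the $(A,\omega,\phi)$ parametrisation is a reasonable route to organising the computation, but until the resulting trigonometric non-degeneracy is actually verified, the proof is incomplete.
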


\begin{proof}
    For a given $x_0 \in \mathds{R}$, we are going to show that $0$ is a regular value of the map
    \[
    f_{x_0}(u_0, v_0) = u_1(u_0 + x_0) - u_0 x_1.
    \]
    Recall that $x_1$ and $u_1$ are functions of $x_0, u_0$ and $v_0$. The derivatives of $u_1$ and $x_1$ with respect to $u_0$ or $v_0$ coincide with the expressions in \cref{eq:dx(t)} and \cref{eq:du(t)}, taking $t = 1$. 
    
    Let $q = (x_0, y_0) \in \G_\alpha$ and $\lambda_0 = u_0 \diff x|_{q} + v_0 \diff y|_{q} \in \mathrm{Conj}_q(\mathds{G}_\alpha)$ with $H(q, \lambda_0) \neq 0$, that is to say $u_0$ and $v_0$ are such that the condition \cref{eq:conjugatelocusagrushin} holds. We find that
    \begin{equation}
    \label{eq:Tconjuagrushin}
    \partial_{u_0} f_{x_0}(u_0, v_0) =
    \begin{cases}
      \frac{u_1 v_0^2 x_0^{2 a}}{u_0^2 + v_0^2 x_0^{2 \alpha}} & \text{if $u_0 + x_0 = 0$}\\
      \frac{\dot{u}_1 (u_0 + x_0)^2 ((\alpha - 1) u_0-x_0) - \alpha v_0^2 x_1 x_0^{2 \alpha} x_0}{\alpha (u_0 + x_0) \left(u_0^2 + v_0^2 x_0^{2 \alpha}\right)} & \text{otherwise}
    \end{cases},    
\end{equation}
and
\begin{equation}
    \label{eq:Tconjvagrushin}
    \partial_{v_0} f_{x_0}(u_0, v_0) =
    \begin{cases}
      \frac{u_1 v_0 x_0^{2 \alpha} x_0}{u_0^2 + v_0^2 x_0^{2 \alpha}} & \text{if $u_0 + x_0 = 0$}\\
      \frac{\dot{u}_1 (u_0 + x_0)^2 \left(u_0^2 + \alpha v_0^2 x_0^{2 \alpha} + u_0 x_0\right) + \alpha u_0 v_0^2 x_1 x_0^{2 \alpha} x_0}{\alpha v_0 (u_0 + x_0) \left(u_0^2 + v_0^2 x_0^{2 \alpha}\right)} & \text{otherwise}
    \end{cases}.  
\end{equation}

If $u_0 + x_0 = 0$, then $\partial_{v_0} f_{x_0}(u_0, v_0) = \partial_{u_0} f_{x_0}(u_0, v_0) = 0$ would actually imply that $H = 0$, contradicting our hypothesis. On the other hand, if $u_0 + x_0 \neq 0$, then
\[
\frac{u_0}{v_0} \partial_{u_0} f_{x_0}(u_0, v_0) + f_{u_0}(u_0, v_0) = \frac{\dot{u}_1 (u_0 + v_0)}{v_0} = - \alpha x_1^{2 (\alpha - 1)} x_1 v_0 (u_0 + v_0),
\]
which vanishes if and only $x_1 = 0$ and by \cref{eq:conjugatelocusagrushin} this again implies that $H = 0$.

In particular, the linear map $\diff f_{x_0} (u_0, v_0)$ is surjective and the result follows.
\end{proof}

As explained in \cref{subsec:whitneyfoldpleat}, the nature of the exponential map near it singularities will be determined by how $\Kern(\diff_{\lambda_0} \exp_q)$ and $\T_{\lambda_0}(\mathrm{Conj}_q(\G_\alpha))$ interact at $\lambda_0 \in \mathrm{Conj}_q(\G_\alpha)$.

\begin{theorem}
\label{theo:normalformsGa}
On a dense non-empty open subset $C^0$ of $\mathrm{Conj}_q(\G_\alpha)$, the exponential map $\exp_q : \T^*(\G_\alpha) \to \G_\alpha$ is equivalent to $f : \R^2 \to \R^2 : (x, y) \mapsto (x^2, y)$ in the neighbourhood of any $\lambda_0 \in C^0$.
\end{theorem}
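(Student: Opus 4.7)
The plan is to verify the hypotheses of Whitney's \cref{theo:whitneyfoldpleat} on a dense open subset $C^0 \subseteq \mathrm{Conj}_q(\G_\alpha)$; in dimension two, that theorem produces precisely the fold normal form $(x, y) \mapsto (x^2, y)$. The two good-singularity conditions of \cref{def:goodsingularity} follow from the earlier results: condition (i) is provided by \cref{prop:conjlocusGa}, which exhibits $\mathrm{Conj}_q(\G_\alpha)$ as the zero set of the function $f_{x_0}(u_0, v_0) := u_1(u_0 + x_0) - u_0 x_1$ with $0$ a regular value, and condition (ii), $\dim \Kern(\diff_{\lambda_0}\exp_q) = 1$, is the order-one assertion of \cref{prop:singGa}. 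Thus every $\lambda_0 \in \mathrm{Conj}_q(\G_\alpha)$ is already a good singularity, and only the fold condition remains.

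For the fold condition, both $\Kern(\diff_{\lambda_0}\exp_q)$ and $\T_{\lambda_0}(\mathrm{Conj}_q(\G_\alpha))$ are one-dimensional lines inside the two-dimensional tangent space $\T_{\lambda_0}(\T^*_q(\G_\alpha))$, so transversality amounts to these two lines being distinct. Using the kernel generator $v_0 x_0^{2\alpha}\partial_u - u_0 \partial_v$ from \cref{eq:kerneldexpagrushin} together with the identification $\T_{\lambda_0}(\mathrm{Conj}_q(\G_\alpha)) = \Kern(\diff f_{x_0})$, this reduces to the non-vanishing of
\begin{equation*}
\Phi(\lambda_0) := v_0 x_0^{2\alpha}\, \partial_{u_0} f_{x_0} - u_0\, \partial_{v_0} f_{x_0}.
\end{equation*}
Substituting the expressions \cref{eq:Tconjuagrushin,eq:Tconjvagrushin} for the partial derivatives, using the Hamilton equation $\dot u_1 = -\alpha v_0^2 x_1^{2\alpha-1}$ from \cref{eq:ODEagrushin}, and simplifying by means of the conjugate-locus relation $u_0 x_1 = u_1(u_0 + x_0)$, the function $\Phi$ admits an explicit real-analytic expression on $\mathrm{Conj}_q(\G_\alpha)$.

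It remains to show $\Phi$ is not identically zero. A direct computation reveals that $\Phi$ does vanish at the case-$2$ conjugate covectors of \cref{prop:singGa} (those with $u_0 = 0$), where both $\Kern(\diff_{\lambda_0}\exp_q)$ and $\T_{\lambda_0}(\mathrm{Conj}_q(\G_\alpha))$ reduce to $\vspan\{\partial_u\}$; however, $\Phi$ does not vanish at a generic case-$5$ covector (with $u_0$, $x_0$, $u_0 + x_0$ all non-zero), as one checks by a concrete computation, for instance in the classical Grushin plane $\alpha = 1$. Since $\mathrm{Conj}_q(\G_\alpha)$ is a real-analytic $1$-manifold and $\Phi$ is a non-trivial real-analytic function on it, the zero set $Z := \{\Phi = 0\}$ is discrete, and $C^0 := \mathrm{Conj}_q(\G_\alpha) \setminus Z$ is open, dense, and non-empty. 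Applying \cref{theo:whitneyfoldpleat} on $C^0$ then yields the stated normal form. The main obstacle is the algebraic simplification of $\Phi$, which is elementary but lengthy because of the complicated dependence of $x_1$ and $u_1$ on $(u_0, v_0, x_0)$ through the $\alpha$-trigonometric functions.
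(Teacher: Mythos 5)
Your proposal is correct and follows essentially the same route as the paper: good singularity via \cref{prop:conjlocusGa} and \cref{prop:singGa}, transversality of $\Kern(\diff_{\lambda_0}\exp_q)$ with $\T_{\lambda_0}(\mathrm{Conj}_q(\G_\alpha))$ expressed as the non-vanishing of an analytic function on the one-dimensional conjugate locus, and then \cref{theo:whitneyfoldpleat}. If anything, you are slightly more careful than the paper, which asserts that the non-transversal set $C^1$ is nowhere dense ``by analycity'' without explicitly checking that the determinant is not identically zero on the conjugate locus, whereas you isolate exactly that verification (and correctly note that $\Phi$ does vanish at the $u_0=0$ covectors, so the check at a generic case-5 covector is genuinely needed).
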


\begin{proof}
    By \cref{prop:conjlocusGa}, we can choose a one-dimensional open connected submanifold $C$ of $\mathrm{Conj}_q(\G_\alpha)$ in $\T^*_q(\G_\alpha)$ containing $\lambda_0$. In particular, the conjugate vectors in $C$ have all order 1. We thus write $C^0$ (resp. $C^1$) for the set of covectors $\overline{\lambda}_0 \in C$ such that 
\[
\dim \bigl[ \Kern(\diff_{\overline{\lambda}_0} \exp_q) \cap \T_{\overline{\lambda}_0}(\mathrm{Conj}_q(\G_\alpha)) \bigr] = 0 \text{ (resp. = 1)}.
\]
The condition $\lambda_0 \in C^1$ can be expressed as a system of two equations
\begin{equation*}
    \left\{
    \begin{aligned}
    u_1 (u_0 + x_0) - u_0 x_1 ={}& 0 \\
    \det 
    \begin{pmatrix}
			\partial_{u_0} f_{x_0}(u_0, v_0) & v_0 x_0^{2 \alpha} \\
            \partial_{v_0} f_{x_0}(u_0, v_0) & -u_0
    \end{pmatrix}
    ={}& 0 
    \end{aligned}.
    \right.
\end{equation*}
The set $C^1$ is thus closed and nowhere dense by analycity. Thus $C^0$ is dense in $C^0 \cup C^1$.

The condition $\lambda_0 \in C^0$ corresponds exactly to $\lambda_0$ being in that case a \textit{fold singularity} in the sense of \cref{def:foldsingularity}, and the normal form of a Whitney fold follows from \cref{theo:whitneyfoldpleat}.
\end{proof}

From \cref{theo:normalformsGa} readily follows the following fact.

\begin{proposition}
    \label{prop:localnoninjGa}
    The exponential map $\exp_p : \T^*_q(\G_\alpha) \to M$ fails to be injective in any neighbourhood of conjugate covector $\lambda_0 \in \mathrm{Conj}_q(\G_\alpha)$.
\end{proposition}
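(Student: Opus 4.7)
The plan is to derive this as a direct corollary of \cref{theo:normalformsGa} combined with a density argument. First, I would record the elementary observation that the model Whitney fold $f(x,y) = (x^2, y)$ fails to be injective on every neighbourhood of the origin in $\R^2$: for every $\epsilon > 0$, the distinct points $(\epsilon, 0)$ and $(-\epsilon, 0)$ are both sent to $(\epsilon^2, 0)$. Transporting this through the two local diffeomorphisms provided by the normal form theorem, the non-injectivity property immediately transfers to $\exp_q$ on every open neighbourhood of any covector $\lambda_0' \in C^0$.

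To treat an arbitrary conjugate covector $\lambda_0 \in \mathrm{Conj}_q(\G_\alpha)$, I would then appeal to density. Given any open neighbourhood $W$ of $\lambda_0$ in $\T^*_q(\G_\alpha)$, the proof of \cref{theo:normalformsGa} exhibits $C^0$ as dense in a local patch of $\mathrm{Conj}_q(\G_\alpha)$ containing $\lambda_0$, so I can pick some $\lambda_0' \in C^0 \cap W$. Applying the first step to $\lambda_0'$ inside a sufficiently small subneighbourhood $U \subset W$ produces two distinct points of $U$, hence of $W$, with the same image under $\exp_q$, establishing non-injectivity on $W$.

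I do not anticipate any substantial obstacle; the statement is essentially a packaging of the fold normal form. The only minor issue worth flagging is that \cref{theo:normalformsGa} states density of $C^0$ only within a chosen local patch $C$ of the conjugate locus. Since such a patch may be centred around any prescribed $\lambda_0$, however, the density argument applies uniformly across $\mathrm{Conj}_q(\G_\alpha)$, including at covectors lying in the nowhere dense exceptional set $C^1$.
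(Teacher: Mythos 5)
Your proposal is correct and follows essentially the same route as the paper's proof: non-injectivity of the model fold $(x,y)\mapsto(x^2,y)$ transported through the normal-form diffeomorphisms handles $\lambda_0\in C^0$, and the density of $C^0$ in the conjugate locus extends the conclusion to the exceptional covectors. Your version merely spells out the two-point witness $(\pm\epsilon,0)$ and the local-patch subtlety more explicitly than the paper does.
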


\begin{proof}
    The normal form of $\exp_q$ around $\lambda_0$, as stated in \cref{theo:normalformsGa}, shows that $\exp_q$ is not a one-to-one function in any area near $\lambda_0$ when $\lambda_0$ is in $C^0$. Since $C^0$ is dense in $\mathrm{Conj_q}(\G_\alpha)$, the same conclusion holds for $\lambda_0$ in $C^1$.
\end{proof}

\section{The special unitary group \texorpdfstring{$\mathrm{SU}(2)$}{SU(2)}}

\label{sec:su2}

The Lie group $\mathrm{SU}(2)$ is the group of $2 \times 2$ complex matrices defined as
\[
\mathrm{SU}(2) = \left\{ 
\begin{pmatrix}
\alpha & \beta \\
- \overline{\beta} & \overline{\alpha}
\end{pmatrix} \in \mathds{C}^{2 \times 2}
\mid \alpha, \beta \in \mathds{C}, |\alpha|^2 + |\beta|^2 = 1
\right\}.
\]
An element of $\mathrm{SU}(2)$ can be identified with a pair $(\alpha, \beta) \in \mathds{C}^2$ such that $|\alpha|^2 + |\beta|^2 = 1$. With this identification, the group structure is given by
\[
(\alpha, \beta) \star (\alpha', \beta') = (\alpha \alpha' - \beta \overline{\beta'}, \alpha \beta' + \overline{\alpha'} \beta).
\]
The identity element is the identity matrix which corresponds to $(1, 0)$ and the inverse is $(\alpha, \beta)^{-1} = (\overline{\alpha}, - \beta)$.

The Lie algebra $\mathfrak{su}(2)$ of $\mathrm{SU}(2)$ is naturally identified with the space of antihermitian traceless $2 \times 2$ complex matrices:
\[
\mathfrak{su}(2) = \left\{ 
\begin{pmatrix}
\I a & b \\
- \overline{b} & -\I a
\end{pmatrix} \in \mathds{C}^{2 \times 2}
\mid a \in \R, b \in \C
\right\} \cong \mathrm{T}_e(\mathrm{SU}(2)).
\]

A basis for $\mathfrak{su}(2)$ is given by $X_0, X_1, X_2$ where
\[
X_0 := 
\frac{1}{2}
\begin{pmatrix}
\I & 0 \\
0 & -\I
\end{pmatrix}, \ 
X_1 := \frac{1}{2}
\begin{pmatrix}
0 & 1 \\
- 1 & 0
\end{pmatrix}\text{, and }
X_2 :=
\frac{1}{2} 
\begin{pmatrix}
0 & \I \\
\I & 0
\end{pmatrix}.
\]
We also have the commutation relations 
\begin{equation}
    \label{eq:bracketrelationsSU2}
    [X_1, X_2] = X_0, \ [X_2, X_0]= X_1, \text{ and } [X_0, X_1] = X_2.
\end{equation} 

We construct a sub-Riemannian structure in the following way: let $\langle \cdot, \cdot \rangle$ the inner product on $\mathfrak{su}(2)$ that turns $X_1, X_2$ and $X_0$ into an orthonormal basis. We also define $\mathbf{d} := \mathrm{span}\left\{X_1, X_2\right\}$ and $\mathbf{s} = \mathrm{span}\left\{X_0\right\}$, and we denote by $\langle \cdot, \cdot \rangle_{\mathbf{d}}$ the restriction of $\langle \cdot, \cdot \rangle$ to $\mathbf{d}$. By left-translating $\mathbf{d}$ and $\langle \cdot, \cdot \rangle_{\mathbf{d}}$, we obtain a well-defined sub-Riemannian structure on $\mathrm{SU}(2)$. This kind of sub-Riemannian structure on a Lie group is often called a $\mathbf{d} \oplus \mathbf{s}$ sub-Riemannian structure and the expression of the normal geodesics for such a structure are well-known (see \cite[Section 7.7.1]{comprehensive2020}). The metric $\langle \cdot, \cdot \rangle$ is used to identify vectors and covectors.

The sub-Riemannian Hamiltonian of this structure is given by
\[
H : \mathrm{T}^*(\mathrm{SU}(2)) \to \mathds{R} : (u X_1(\alpha, \beta) + v X_2(\alpha, \beta) + w X_0(\alpha, \beta), \alpha, \beta) \mapsto \frac{1}{2}(u^2 + v^2).
\]
By left-invariance, it is enough to write the geodesics starting from the identity. In this case, the normal extremal starting from $(q, \lambda_0) \in \T^*(\mathrm{SU}(2))$ with $q = 0$ and $\lambda_0 = u_0 X_1 + v_0 X_2 + w_0 X_0$ is
\begin{equation}
    \label{eq:SU2geodesic}
    \left\{
    \begin{aligned}
    \alpha(t) ={}& \left(\cos \left(\frac{w_0 t}{2}\right)- \I \sin \left(\frac{w_0 t}{2}\right)\right) \left(\cos \left(\frac{|\lambda_0| t}{2} \right)+ \I \frac{w_0}{|\lambda_0|} \sin \left(\frac{|\lambda_0| t}{2}\right)\right) \\
    \beta(t) ={}& \frac{1}{|\lambda_0|} (u_0 + \I v_0) \sin \left(\frac{|\lambda_0| t}{2}\right) \left(\cos \left(\frac{w_0 t}{2}\right)+ \I \sin \left(\frac{w_0 t}{2}\right)\right) \\
    u(t) ={}& u_0 \cos(w_0 t) - v_0 w_0 \sin(w_0 t)  \\
    v(t) ={}& v_0 \cos(w_0 t) + u_0 w_0 \sin(w_0 t) \\
    w(t) ={}& w_0
    \end{aligned},
    \right.
\end{equation}
where $|\lambda_0|^2 = \langle \lambda_0, \lambda_0 \rangle = u_0^2 + v_0^2 + w_0^2$.

This structure admits the kind of normal frame that we described in \cref{zelenkoli}. To introduce it, we must first establish some notation, closely following \cite[Section 7.5.]{curvature}. For $i \in \{0, 1, 2\}$, we define the functions $h_i : \T^*(\mathrm{SU}(2)) \to \R$ by $h_i(\lambda) := \langle \lambda, X_i(\pi(\lambda)) \rangle$. Then, the vector fields
\[
\overrightarrow{h_0}, \overrightarrow{h_1}, \overrightarrow{h_2}, \partial_{h_0}, \partial_{h_1}, \partial_{h_2}
\]
form a local frame of vector fields on $\T^*(\mathrm{SU}(2))$. We can also introduce cylindrical coordinates $\theta, \rho : \T^*(\mathrm{SU}(2)) \to \R$ as $h_1 = \rho \cos \theta$, $h_2 = \rho \sin \theta$ and use the local frame
\[
\overrightarrow{h_0}, \overrightarrow{h_1}, \overrightarrow{h_2}, \partial_{h_0}, \partial_{\rho}, \partial_{\theta}.
\]
The Euler vector field is the generator of the dilations $\lambda \mapsto c \lambda$ on the fibers of $\T^*(\mathrm{SU}(2))$, in coordinates
\[
\mathfrak{e} = h_0 \partial_{h_0} + h_1 \partial_{h_1} + h_2 \partial_{h_2} = \rho \partial_\rho + h_0 \partial_{h_0}.
\]
On the other hand, the Hamiltonian is written as $H = \frac{1}{2} (h_1^2 + h_2^2)$ while its symplectic gradient is $\overrightarrow{H} = h_1 \overrightarrow{h_1} + h_2 \overrightarrow{h_2}$. We finally define $\overrightarrow{H'} = [\partial_\theta, \overrightarrow{H}]$. It is not difficult to see that thanks to the bracket relations \cref{eq:bracketrelationsSU2}, we have
\begin{align*}
\overrightarrow{h_0} ={}& \tilde{X_0} + h_2 \partial_{h_1} - h_1 \partial_{h_2} = \Tilde{X_0} - \partial_\theta \\
\overrightarrow{h_1} ={}& \tilde{X_1} -  h_2 \partial_{h_0} + h_0 \partial_{h_2} \\
\overrightarrow{h_2} ={}& \tilde{X_2}  - h_0 \partial_{h_1} + h_1 \partial_{h_0}
\end{align*}
where $\tilde{X_i}(\lambda) \in \mathrm{T}_{\lambda}(\mathrm{T}^*(M))$ are chosen such that $\diff_\lambda \pi[\tilde{X_i}(\lambda)] = X_i(\pi(\lambda))$. This allows us to compute
\begin{equation}
    \label{eq:HH'}
    \begin{aligned}
     \overrightarrow{H} ={}& h_1 \tilde{X_1} + h_2 \tilde{X_2} + h_0 (h_1 \partial_{h_2} - h_2 \partial_{h_1}) \\
     \overrightarrow{H}' ={}& h_1 \overrightarrow{h_2} - h_2 \overrightarrow{h_1} - 2 H \partial_{h_0} + h_0 (h_1 \partial_{h_1} + h_2 \partial_{h_2}) = h_1 \tilde{X_2} - h_2  \tilde{X_1} \\
     [\overrightarrow{H}, \overrightarrow{H}'] ={}& 2 H \overrightarrow{h_0} - h_0 \overrightarrow{H} + |\lambda_0|^2 (h_1 \partial_{h_2} - h_2 \partial_{h_1}) = 2 H \tilde{X_0} - h_0 (h_1 \tilde{X_1} + h_2 \tilde{X_2})
    \end{aligned}.
\end{equation}

Geodesics in 3D contact sub-Riemannian structures, including $\mathrm{SU}(2)$, are ample and equiregular, allowing for the use of a canonical symplectic moving frame as stated in \cref{zelenkoli}. The construction of such a frame for any 3D contact structure can be found in \cite[Section 7.5]{curvature} and we apply it here to $\mathrm{SU}(2)$.

\begin{proposition}
    \label{prop:zelenkoliSU2}
    Let $\lambda(t)$ be the normal extremal in $\T^*(\mathrm{SU}(2))$ starting from $(q, \lambda_0) \in \mathrm{T}^*(\mathrm{SU}(2))$. The symplectic moving frame $(E_a, E_b, E_c, F_a, F_b, F_c)$ where
    \begin{equation}
    \label{eq:SU2movingframe}
    \begin{aligned}
    E_a(t) ={}& \frac{1}{\sqrt{2 H}} \partial_\theta & F_a(t) ={}& \frac{1}{\sqrt{2 H}} \overrightarrow{H'}\\
    E_b(t) ={}& \frac{1}{\sqrt{2 H}} \mathfrak{e} & F_b(t) ={}& \frac{1}{\sqrt{2 H}} \overrightarrow{H}\\
    E_c(t) ={}& - \frac{1}{\sqrt{2 H}} \partial_{h_0} & F_c(t) ={}& \frac{1}{\sqrt{2 H}} ([\overrightarrow{H'}, \overrightarrow{H}] + |\lambda_0|^2 \partial_\theta)\\
    \end{aligned}
\end{equation}
is a canonical moving frame along $\lambda(t)$. More specifically, it satisfies the structural equations
\begin{equation}
    \label{eq:SU2structuraleqODE}
    \begin{aligned}
    \dot{E}_a(t) ={}& - F_a(t) & \dot{F}_a(t) ={}& |\lambda_0|^2 E_a(t)\\
    \dot{E}_b(t) ={}& - F_b(t) & \dot{F}_b(t) ={}& 0 \\
    \dot{E}_c(t) ={}& E_a(t) & \dot{F}_c(t) ={}& 0\\
    \end{aligned}.
\end{equation}
\end{proposition}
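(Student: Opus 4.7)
My plan is to verify three things in sequence: first, that the sextuple $(E_a, E_b, E_c, F_a, F_b, F_c)$ is a Darboux basis of $\T_{\lambda(t)}(\T^*(\mathrm{SU}(2)))$ for every $t$; second, that the vertical condition $\diff_{\lambda(t)} \pi[E_i] = 0$ holds for $i \in \{a, b, c\}$; third, that the structural equations hold, by computing the Lie derivative $\dot{X} = \mathcal{L}_{\overrightarrow{H}} X = [\overrightarrow{H}, X]$ of each frame vector and expressing the answer back in the frame.

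Verticality is immediate, since $\partial_\theta$, the Euler field $\mathfrak{e}$, and $\partial_{h_0}$ are all manifestly tangent to the fibres of $\T^*(\mathrm{SU}(2))$. The Darboux relations I would derive from the identity $\sigma(\overrightarrow{f}, \overrightarrow{g}) = \{f, g\}$ together with the Poisson brackets of $h_0, h_1, h_2$ coming from \cref{eq:bracketrelationsSU2}, evaluated on the explicit formulas in \cref{eq:HH'}. A useful simplification to record at the outset is that $F_c$ collapses to
\begin{equation*}
F_c = -\sqrt{2H}\, \overrightarrow{h_0} + h_0 F_b,
\end{equation*}
since the $|\lambda_0|^2 \partial_\theta$ summand added to $[\overrightarrow{H'}, \overrightarrow{H}]$ in its definition is precisely what is needed to cancel the corresponding term in the commutator identity from \cref{eq:HH'}.

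For the structural equations, I would first assemble four basic commutators: $[\overrightarrow{H}, \partial_\theta] = -\overrightarrow{H'}$ by the definition of $\overrightarrow{H'}$; $[\overrightarrow{H}, \mathfrak{e}] = -\overrightarrow{H}$, since $H$ is homogeneous of degree two in the fibre and the Euler flow scales $\overrightarrow{H}$ accordingly; $[\overrightarrow{H}, \partial_{h_0}] = -\partial_\theta$, obtained by differentiating $\overrightarrow{H} = h_1 \tilde{X_1} + h_2 \tilde{X_2} + h_0\, \partial_\theta$ in the $h_0$ direction; and $[\overrightarrow{H}, \overrightarrow{h_0}] = 0$, because $\{H, h_0\} = 0$ (a conservation law also visible from $w(t) = w_0$ in \cref{eq:SU2geodesic}). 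Combined with $\overrightarrow{H}(H) = 0$, which lets the factor $1/\sqrt{2H}$ slide through every commutator, these identities yield $\dot{E}_a = -F_a$, $\dot{E}_b = -F_b$, $\dot{E}_c = E_a$, $\dot{F}_b = 0$ and $\dot{F}_c = 0$ in one line each.

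The main obstacle, as I see it, will be the computation of $\dot{F}_a$. Here one applies $[\overrightarrow{H}, \cdot]$ to $\overrightarrow{H'}/\sqrt{2H}$, expands $[\overrightarrow{H}, \overrightarrow{H'}]$ via \cref{eq:HH'}, and must then re-express the resulting $\overrightarrow{h_0}$ piece through the identity $\sqrt{2H}\, \overrightarrow{h_0} = h_0 F_b - F_c$ derived above. The $h_0 F_b$ contributions cancel, the function $|\lambda(t)|^2 = h_0^2 + 2H$ is a constant of motion equal to $|\lambda_0|^2$, and the remaining terms recombine into the claimed right-hand side. Beyond this cancellation, the only place where real care is required is keeping straight the interplay between the $(h_0, h_1, h_2)$ and cylindrical $(h_0, \rho, \theta)$ fibre coordinates when manipulating $\partial_\theta$ and $\mathfrak{e}$.
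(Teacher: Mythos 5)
Your proposal is correct in substance but takes a genuinely different route from the paper's. The paper does not verify the frame directly: it invokes the existence and uniqueness statement of \cref{zelenkoli}, observes that the constructive algorithm has already been carried out for arbitrary 3D contact structures in \cite[Section 7.5]{curvature}, and thereby reduces the whole proposition to computing the only two potentially non-vanishing curvature entries, $R_{aa}(t) = \tfrac{1}{2H}\sigma([\overrightarrow{H},\overrightarrow{H'}],\overrightarrow{H'}) = 2H + h_0^2 = |\lambda_0|^2$ and $R_{cc}(t)=0$. You instead check everything by hand: verticality, the Darboux relations, and the structural equations via the four commutators $[\overrightarrow{H},\partial_\theta]=-\overrightarrow{H'}$, $[\overrightarrow{H},\mathfrak{e}]=-\overrightarrow{H}$, $[\overrightarrow{H},\partial_{h_0}]=-\partial_\theta$, $[\overrightarrow{H},\overrightarrow{h_0}]=0$, all of which are right. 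Your approach is more elementary and self-contained (it does not presuppose the normality of $R$ or the Zelenko--Li uniqueness), at the price of the Darboux verification, which the paper gets for free from the general construction; your $\dot{F}_a$ computation is exactly where the paper's evaluation of $R_{aa}$ reappears.

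One caveat. Carried to the end, your computation of $\dot{F}_a$ yields $\dot{F}_a = |\lambda_0|^2 E_a - F_c$, not the $|\lambda_0|^2 E_a$ displayed in \cref{eq:SU2structuraleqODE}: substituting $\sqrt{2H}\,\overrightarrow{h_0} = h_0 F_b - F_c$ into $\tfrac{1}{\sqrt{2H}}\bigl(2H\overrightarrow{h_0} - h_0\overrightarrow{H} + |\lambda_0|^2\partial_\theta\bigr)$ leaves $-F_c + |\lambda_0|^2 E_a$ after the $h_0 F_b$ cancellation. The extra $-F_c$ is in fact required for consistency with the general structural equation \cref{eq:noncanonicalmovingframe}, with the matrix $C_1$ of the Jacobi-field proposition that follows (whose $(a,c)$ entry is $1$), and with the explicit solutions \cref{eq:jacobifieldssu2explicit}; the displayed equation for $\dot{F}_a$ is missing this term. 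So your method is sound, but your claim that the remaining terms ``recombine into the claimed right-hand side'' should be amended: either you dropped the $-F_c$, or you silently corrected a typo in the statement without flagging it.
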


\begin{proof}
The existence of such a frame is a consequence of \cref{zelenkoli}. The proof of this theorem is in fact constructive and can be found in \cite{zelenkoli2009}. The algorithm for this theorem has been applied to 3D contact structures in \cite[Section 7.5]{curvature}.
The remaining task, as stated in \cite[Proposition 7.13.]{curvature}, is to determine the values of the matrix $R(t)$ mentioned in \cref{zelenkoli}, specifically, the entries $R_{aa}(t)$ and $R_{cc}(t)$, which are the only potentially non-vanishing entries. They are computed with the help of \cref{eq:HH'} as follows:
\begin{align*}
    R_{aa}(t) ={}& \frac{1}{2 H} \sigma_{\lambda(t)}([\overrightarrow{H}, \overrightarrow{H}'], \overrightarrow{H}') = 2 H + h_0^2 = |\lambda_0|^2, \text{ and } \\
    R_{cc}(t) ={}& \frac{1}{2 H} \sigma_{\lambda(t)}([\overrightarrow{H}, [\overrightarrow{H}, \overrightarrow{H}']], [\overrightarrow{H}, \overrightarrow{H}']) - \frac{1}{(2 H)^2} \sigma_{\lambda(t)}([\overrightarrow{H}, \overrightarrow{H}'], \overrightarrow{H}')^2 = 0.
\end{align*}
\end{proof}

The use of such a moving frame makes studying the conjugate locus of $\mathrm{SU}(2)$ much simpler. We start by writing and solving the Jacobi equation \cref{eq:jacobifields} for $\mathrm{SU}(2)$.

\begin{proposition}
    If $(E, F)$ denotes the symplectic frame of \cref{prop:zelenkoliSU2}, then the vector field $\mathcal{J}(t) = \sum_{k \in \{a, b, c\}} p_k(t) E_k(t) + x_k(t) F_k(t)$ along $\lambda(t)$ is a Jacobi field if and only if
    \begin{equation}
    \label{jacobifieldssu2}
    \begin{pmatrix}
			\dot{p}(t) \\
			\dot{x}(t)
    \end{pmatrix}
    =
    \begin{pmatrix}
			-C_1(t) & -R(t) \\
			C_2(t) & C_1(t)\tran
    \end{pmatrix}
    \begin{pmatrix}
			p(t) \\
			x(t)
    \end{pmatrix},
    \end{equation}
    where
    \[
    C_1(t) = \begin{pmatrix}
			0 & 0 & 1 \\
			0 & 0 & 0 \\
            0 & 0 & 0
    \end{pmatrix}, \
    C_2(t) = \begin{pmatrix}
			1 & 0 & 0 \\
			0 & 1 & 0 \\
            0 & 0 & 0
    \end{pmatrix},
    \]
    \[
    R(t) = \begin{pmatrix}
			|\lambda_0|^2 & 0 & 0 \\
			0 & 0 & 0 \\
            0 & 0 & 0
    \end{pmatrix}.
    \]
\end{proposition}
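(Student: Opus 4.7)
The plan is that this statement is essentially a coordinate translation of the abstract Jacobi equation \cref{eq:jacobifields} once the canonical symplectic moving frame of \cref{prop:zelenkoliSU2} has been introduced. Because the frame from \cref{prop:zelenkoliSU2} is a Darboux basis along $\lambda(t)$ whose structural equations \cref{eq:SU2structuraleqODE} are explicitly written down, the task reduces to extracting the matrices $C_1(t), C_2(t), R(t)$ of \cref{eq:noncanonicalmovingframe} and then applying \cref{eq:jacobifields}.

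Concretely, I would begin by writing $\mathcal{J}(t) = \sum_{k \in \{a,b,c\}} p_k(t) E_k(t) + x_k(t) F_k(t)$, differentiating along $\lambda(t)$ via the Leibniz rule,
\[
\dot{\mathcal{J}} = \sum_{k \in \{a,b,c\}} \bigl( \dot{p}_k E_k + p_k \dot{E}_k + \dot{x}_k F_k + x_k \dot{F}_k \bigr),
\]
and substituting the structural equations \cref{eq:SU2structuraleqODE}. Setting $\dot{\mathcal{J}} = 0$ and collecting the coefficients of each of the six frame elements produces six scalar first-order linear ODEs in $(p_a, p_b, p_c, x_a, x_b, x_c)$. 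Repacking these as a block matrix equation in the format of \cref{eq:jacobifields} yields \cref{jacobifieldssu2}; comparison shows that the only nonzero entries are $(C_1)_{ac} = 1$, $(C_2)_{aa} = (C_2)_{bb} = 1$ and $R_{aa} = |\lambda_0|^2$, which is precisely what the proposition asserts. Equivalently, one may simply read off $C_1, C_2, R$ directly from \cref{eq:SU2structuraleqODE} by termwise comparison with \cref{eq:noncanonicalmovingframe} and then invoke \cref{eq:jacobifields} as a black box.

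There is no serious obstacle here, as the computation is mechanical and local. The only point to watch is conventional bookkeeping: following \cref{subsec:jacobifields}, the coefficient $p_k$ must be paired with the vertical direction $E_k$ and $x_k$ with the horizontal direction $F_k$, and the four $3 \times 3$ blocks must carry the signs prescribed in \cref{eq:jacobifields}. Once these conventions are fixed, the six coefficient equations match the claimed matrices line by line, and no further analysis is needed.
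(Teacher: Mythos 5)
Your proposal is correct and is essentially the paper's own proof, which consists of the single remark that the result follows from \cref{prop:zelenkoliSU2} together with the general coordinate form \cref{eq:jacobifields}; you have merely spelled out the Leibniz-rule computation that the paper leaves implicit. One bookkeeping caveat: a literal termwise reading of \cref{eq:SU2structuraleqODE} gives $\dot{F}_a(t) = |\lambda_0|^2 E_a(t)$ with no $-F_c(t)$ term, which is incompatible with the entry $(C_1)_{ac}=1$ forced by $\dot{E}_c(t) = E_a(t)$ in the general form \cref{eq:noncanonicalmovingframe} --- the structural equation should read $\dot{F}_a(t) = |\lambda_0|^2 E_a(t) - F_c(t)$ (as one checks from $F_c = \frac{1}{\sqrt{2H}}([\overrightarrow{H'},\overrightarrow{H}]+|\lambda_0|^2\partial_\theta)$), and with that correction your line-by-line matching does produce exactly the stated $C_1$, $C_2$, and $R$.
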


\begin{proof}
    This is a simple consequence of \cref{prop:zelenkoliSU2} with \cref{eq:jacobifields}.
\end{proof}

The equations \cref{jacobifieldssu2} can easily be integrated. We write out the solutions explicitly in order to proceed with the next steps.
\begin{equation}
    \label{eq:jacobifieldssu2explicit}
    \left\{
    \begin{aligned}
    p_a(t) ={}& p_{a0} \cos (|\lambda_0| t) - \frac{\left(|\lambda_0|^2 x_{a0} + p_{c0} \right) \sin (|\lambda_0| t)}{|\lambda_0|}\\
    p_b(t) ={}& p_{b0}\\
    p_c(t) ={}& p_{c0} \\
    x_a(t) ={}& \frac{\left(|\lambda_0|^2 x_{a0} + p_{c0} \right) \cos (|\lambda_0| t) + |\lambda_0| p_{a0} \sin (|\lambda_0| t) - p_{c0}}{|\lambda_0|^2}\\
    x_b(t) ={}& p_{b0} t + x_{b0}\\
    x_c(t) ={}& \frac{|\lambda_0| \left(|\lambda_0|^2 x_{c0} - p_{a0} \cos (|\lambda_0| t) + p_{a0} - p_{c0} t\right)+\left(|\lambda_0|^2 x_{a0} + p_{c0} \right) \sin (|\lambda_0| t)}{|\lambda_0|^3}
    \end{aligned}
    \right.
\end{equation}

\begin{proposition}
    \label{prop:conjlocuskernelSU2}
    The covector $\lambda_0 \in \T^*_q(\mathrm{SU}(2))$ is a conjugate covector of $q \in \mathrm{SU}(2)$ with $H(q, \lambda_0) \neq 0$ if and only if 
    \[
    \sin \left(\frac{|\lambda_0|}{2}\right) \left(|\lambda_0| \cos \left(\frac{|\lambda_0|}{2}\right)-2 \sin \left(\frac{|\lambda_0|}{2}\right)\right) = 0, \text{ and } |\lambda_0| \neq 0.
    \]
    Furthermore, the conjugate covectors $\lambda_0 \in \mathrm{Conj}_q(\mathrm{SU}(2))$ are all of order one, and
    \begin{equation}
    \label{eq:kernelexpSU2}
    \Kern(\diff_{\lambda_0} \exp_q)=  \vspan \left\{ |\lambda_0| \cos \left(\frac{|\lambda_0|}{2}\right) (u_0 \partial_v - v_0 \partial_u) + 4 \sin \left(\frac{|\lambda_0|}{2}\right)  \partial_w \right\}.
    \end{equation}
\end{proposition}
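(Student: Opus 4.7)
The plan is to combine the coordinate description of $\Kern(\diff_{\lambda_0}\exp_q)$ given in \cref{eq:kernelconjugatelocus} with the explicit Jacobi fields already integrated in \cref{eq:jacobifieldssu2explicit}. Writing the initial data of a Jacobi field as $(p_{a0},p_{b0},p_{c0},x_{a0},x_{b0},x_{c0})$, the condition $x(0)=0$ zeroes out $x_{a0}=x_{b0}=x_{c0}=0$, and evaluating the closed-form formulas at $t=1$ turns $x(1)=0$ into three linear equations in $(p_{a0},p_{b0},p_{c0})$. The equation $x_b(1)=p_{b0}=0$ is immediate and reflects the tautological Jacobi field tangent to the geodesic flow; it decouples the problem and leaves a $2\times 2$ system in $(p_{a0},p_{c0})$ coming from $x_a(1)=0$ and $x_c(1)=0$.

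Writing $L=|\lambda_0|$, $s=\sin(L/2)$ and $c=\cos(L/2)$, the half-angle identities $1-\cos L = 2s^2$ and $\sin L = 2sc$ reduce these two equations to
\begin{align*}
    s\bigl(L\,c\,p_{a0} - s\,p_{c0}\bigr) &= 0,\\
    2L\,s^2\,p_{a0} + (2sc - L)\,p_{c0} &= 0.
\end{align*}
Expanding the $2\times 2$ determinant and simplifying with $s^2+c^2=1$ collapses it into a scalar multiple of $sL(L\,c-2s)$, so a non-trivial solution exists exactly when $s(L\,c-2s)=0$; combined with the hypothesis $H\neq 0$, which forces $L\neq 0$, this is the stated conjugacy equation. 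In both branches, namely $s=0$ (then $p_{c0}=0$ and $p_{a0}$ is free) and $s\neq 0$ with $L\,c=2s$ (then $p_{c0}=L\,c\,p_{a0}/s$ is determined by $p_{a0}$), the null space has dimension one, confirming that every conjugate covector has order one.

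To obtain the explicit spanning vector, I translate $p_{a0}E_a(0)+p_{c0}E_c(0)$ into the linear fibre coordinates $\partial_u,\partial_v,\partial_w$ of $\T^*_q(\mathrm{SU}(2))$. From the cylindrical relations $h_1=\rho\cos\theta$, $h_2=\rho\sin\theta$ one reads $\partial_\theta|_{\lambda_0}=u_0\partial_v-v_0\partial_u$, and $\partial_{h_0}=\partial_w$, so by \cref{prop:zelenkoliSU2} the frame vectors $E_a(0)$ and $E_c(0)$ are proportional to $u_0\partial_v-v_0\partial_u$ and $\partial_w$ respectively, up to the common factor $1/\sqrt{2H}$. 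Selecting $(p_{a0},p_{c0})$ proportional to $(s,L\,c)$ in the generic branch and $(1,0)$ when $s=0$, and using the conjugacy equation $s(L\,c-2s)=0$ to combine both cases uniformly, yields the announced spanning vector after clearing the overall scalar $1/\sqrt{2H}$. The main delicate step is the algebraic reduction of the $2\times 2$ determinant to the factored form $s(L\,c-2s)$, together with the careful sign bookkeeping coming from \cref{prop:zelenkoliSU2} when re-expressing the kernel in the cartesian coordinates of the fibre.
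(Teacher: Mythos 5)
Your proposal follows essentially the same route as the paper: both reduce the conjugacy condition to the vanishing of a $3\times 3$ linear system obtained from the explicit Jacobi fields \cref{eq:jacobifieldssu2explicit} (with $p_{b0}=0$ decoupling), identify the nontrivial kernel via the factorization $\sin(\tfrac{|\lambda_0|}{2})\bigl(|\lambda_0|\cos(\tfrac{|\lambda_0|}{2})-2\sin(\tfrac{|\lambda_0|}{2})\bigr)$, and then re-express the one-dimensional kernel in the fibre coordinates using $E_a(0)\propto\partial_\theta=u_0\partial_v-v_0\partial_u$ and $E_c(0)\propto\partial_{h_0}$. The only cosmetic difference is that your half-angle substitution and $2\times 2$ determinant treat the paper's four cases uniformly; just be careful that $E_c(0)=-\tfrac{1}{\sqrt{2H}}\partial_{h_0}$ carries a minus sign, which is exactly the "sign bookkeeping" you flag and must be tracked to land on the stated spanning vector.
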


\begin{proof}
From the equation \cref{eq:jacobifieldssu2explicit} for the Jacobi fields of $\mathrm{SU}(2)$ and the relationship between Jacobi fields and conjugate points of \cref{prop:jacobivsconjpt}, we see that we have a conjugate covector if and only if there are $(p_{a0}, p_{b0}, p_{c0}) \neq 0$ such that
\begin{equation*}
    \underbrace{
    \begin{pmatrix}
			\frac{\sin (|\lambda_0|)}{|\lambda_0|} & 0 & \frac{\cos (|\lambda_0|)-1}{|\lambda_0|^2} \\
			0 & 1 & 0 \\
			\frac{1-\cos (|\lambda_0|)}{|\lambda_0|^2} & 0 & \frac{\sin (|\lambda_0|)-|\lambda_0|}{|\lambda_0|^3} \\
    \end{pmatrix}
    }_{=: M_{|\lambda_0|}}
    \begin{pmatrix}
			p_{a0} \\
			p_{b0} \\
			p_{c0}
    \end{pmatrix}
    =
    \begin{pmatrix}
			0 \\
			0 \\
			0
    \end{pmatrix}.
    \end{equation*}
    In that case, we will be able to compute the kernel according to \cref{prop:kernelexp}:
    \[
    \Kern(\diff_{\lambda_0} \exp_q) = \left\{ p_{a0} E_{a}(0) + p_{b0} E_{b}(0) + p_{c0} E_c(0) \mid (p_{a0}, p_{b0}, p_{c0}) \in \mathrm{Ker} (M_{|\lambda_0|}) \right\}.
    \]
    
    \textbf{First case:} $|\lambda_0| = 0$. As $|\lambda_0|$ tends to zero, the matrix $M_{|\lambda_0|}$ becomes
    \[
    \begin{pmatrix}
			1 & 0 & -\frac{1}{2} \\
			0 & 1 & 0 \\
			\frac{1}{2}& 0 & -\frac{1}{6} \\
    \end{pmatrix}
    \]
    which has a zero kernel. Therefore, when $|\lambda_0| = 0$ there are no conjugate points along the geodesic. From this point forward, we will assume that $|\lambda_0| \neq 0$
    
    \textbf{Second case:} $\sin (|\lambda_0|) = 0$ and $\cos (|\lambda_0|) = 1$. The matrix $M_{|\lambda_0|}$ is then equal to
    \[
    \begin{pmatrix}
			0 & 0 & 0 \\
			0 & 1 & 0 \\
			0 & 0 & \frac{-1}{|\lambda_0|^2}
    \end{pmatrix}
    \]
    and therefore its kernel is made of vectors of the type $(p_{a0}, 0, 0)$. Thus,
    \[
    \Kern(\diff_{\lambda_0} \exp_q) = \vspan\left\{ E_{a}(0) \right\},
    \]
    and conjugate covectors have order 1.
    
    \textbf{Third case:} $\sin (|\lambda_0|) = 0$ and $\cos (|\lambda_0|) = -1$. Here, the matrix $M_{|\lambda_0|}$ is equal to
    \[
    \begin{pmatrix}
			0 & 0 & - \frac{2}{|\lambda_0|^2} \\
			0 & 1 & 0 \\
			\frac{2}{|\lambda_0|^2} & 0 & \frac{-1}{|\lambda_0|^2}
    \end{pmatrix}
    \]
    whose determinant is $\frac{4}{|\lambda_0|^4}$ so that there is no conjugate vector.
    
    \textbf{Fourth case:} $\sin (|\lambda_0|) \neq 0$. Then, the matrix $M_{|\lambda_0|}$ is similar to
    \[
    \begin{pmatrix}
			1 & 0 & -\frac{\tan \left(\frac{|\lambda_0|}{2}\right)}{|\lambda_0|} \\
			0 & 1 & 0 \\
			0 & 0 & -\frac{\sec \left(\frac{|\lambda_0|}{2}\right) \left(|\lambda_0| \cos \left(\frac{|\lambda_0|}{2}\right)-2 \sin \left(\frac{|\lambda_0|}{2}\right)\right)}{|\lambda_0|^3}
    \end{pmatrix}
    \]
    In particular, we can not have $\sec \left(\frac{|\lambda_0|}{2}\right) = 0$ and therefore we have a conjugate point if and only if $|\lambda_0| \cos \left(\frac{|\lambda_0|}{2}\right)-2 \sin \left(\frac{|\lambda_0|}{2}\right) = 0$. In that case, the matrix then becomes
    \[
    \begin{pmatrix}
			1 & 0 & -\frac{\tan \left(\frac{|\lambda_0|}{2}\right)}{|\lambda_0|} \\
			0 & 1 & 0 \\
			0 & 0 & 0
    \end{pmatrix}
    \]
    and we then have a vanishing Jacobi fields if and only if $p_{b0} = 0$ and $p_{a0} = \frac{p_{c0}}{|\lambda_0|} \tan \left(\frac{|\lambda_0|}{2}\right)$. After some algebraic manipulations, we find
    \[
    \Kern(\diff_{\lambda_0} \exp_q) = \vspan \left\{ |\lambda_0| \cos \left(\frac{|\lambda_0|}{2}\right) E_{a}(0) - 4 \sin \left(\frac{|\lambda_0|}{2}\right)  E_{c}(0)\right\}.
    \]
\end{proof}

\begin{proposition}
\label{prop:conjSU2submanifold}
For all $q \in \mathrm{SU}(2)$, the conjugate locus $\mathrm{Conj}_q(\mathrm{SU}(2))$ is a submanifold of $\T^*_q(\mathrm{SU}(2))$ of codimension 1.
\end{proposition}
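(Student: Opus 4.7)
The plan is to realize $\mathrm{Conj}_q(\mathrm{SU}(2))$ as a regular level set of a smooth function depending only on $|\lambda_0|$, in the same spirit as the proof of \cref{prop:conjlocusGa}. By \cref{prop:conjlocuskernelSU2}, on the open set $\{\lambda_0 \in \T^*_q(\mathrm{SU}(2)) \mid H(q, \lambda_0) \neq 0\}$ the conjugate condition reads $f(|\lambda_0|) = 0$, where
\[
f(r) := \sin(r/2)\bigl(r \cos(r/2) - 2 \sin(r/2)\bigr) = \tfrac{r}{2} \sin r + \cos r - 1.
\]
I would define $F(\lambda_0) := f(|\lambda_0|)$ on $\T^*_q(\mathrm{SU}(2)) \setminus \{0\}$ and apply the preimage theorem. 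Since $\lambda_0 \mapsto |\lambda_0|$ is a submersion away from the origin, $\diff_{\lambda_0} F$ is surjective as soon as $f'(|\lambda_0|) \neq 0$, so the problem reduces to showing that every positive root of $f$ is simple.

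Differentiating yields $f'(r) = \tfrac{1}{2}(r \cos r - \sin r)$. At roots of the first factor, i.e.\ $r = 2 k \pi$ with $k \in \mathds{Z} \setminus \{0\}$ (coming from $\sin(r/2) = 0$), one has $\sin r = 0$ and $\cos r = 1$, giving $f'(r) = r/2 \neq 0$. At roots of the second factor, i.e.\ $r = 2 s$ with $\tan s = s$ and $s \neq 0$, the identity $\sin s = s \cos s$ combined with $\sin^2 s + \cos^2 s = 1$ yields $\cos^2 s = 1/(1 + s^2)$ and $\sin^2 s = s^2/(1 + s^2)$; substituting $\cos r = 1 - 2 \sin^2 s$ and $\sin r = 2 \sin s \cos s$ then simplifies $r \cos r - \sin r$ to $-2 s^3/(1 + s^2)$, so $f'(r) = -s^3/(1 + s^2) \neq 0$. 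This establishes the submanifold claim on the open subset $\{H \neq 0\}$.

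The main obstacle is this second computation, which requires exploiting the defining relation $\tan s = s$ to unlock the expression; the first family of roots is handled immediately. What remains is to address the covectors with $H = 0$, i.e.\ the line $u_0 = v_0 = 0$, along which the Hamiltonian flow is stationary and \cref{prop:conjlocuskernelSU2} does not directly apply. A separate limiting/continuity argument, in the spirit of the \emph{first case} in the proof of \cref{prop:singGa}, is needed to verify that this line does not disrupt the codimension-1 submanifold structure provided by $F^{-1}(0)$.
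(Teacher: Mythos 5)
Your core argument is correct and is essentially the paper's: both proofs realize the locus as a regular level set via the preimage theorem. The difference is in packaging. The paper splits $\mathrm{Conj}_q(\mathrm{SU}(2))$ into the disjoint pieces $C^0$ and $C^1$ and checks that $0$ is a regular value of each factor $f_0(\lambda_0) = |\lambda_0|\cos(|\lambda_0|/2) - 2\sin(|\lambda_0|/2)$ and $f_1(\lambda_0) = \sin(|\lambda_0|/2)$ separately, computing $\diff_{\lambda_0} f_0$ and $\diff_{\lambda_0} f_1$ directly. You instead multiply the factors into the single function $f(r) = \tfrac{r}{2}\sin r + \cos r - 1$ and show every positive root is simple. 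The two are equivalent: since the factors have no common positive root, simplicity of a root of the product at a zero of one factor is exactly nonvanishing of that factor's derivative there, and indeed your value $f'(2s) = -s^3/(1+s^2)$ at $\tan s = s$ matches $\sin(r/2)\,f_0'(r) = -\tfrac{r}{2}\sin^2(r/2)$. Your computations check out; the paper's factor-by-factor version avoids the $\tan s = s$ manipulation at the cost of having to observe that the two factors never vanish simultaneously for $r > 0$.

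The loose end you flag at $H = 0$ deserves a sharper statement than ``a limiting argument is needed to verify the line does not disrupt the structure.'' It genuinely does disrupt it: for $\lambda_0 = w_0 X_0$ with $w_0 \neq 0$ the normal extremal is constant, the whole axis $\{u_0 = v_0 = 0\}$ is collapsed to $q$ by $\exp_q$, and hence $\partial_w \in \Kern(\diff_{\lambda_0}\exp_q)$, so these covectors are conjugate in the sense of the paper's definition yet form a one-dimensional set. No continuity argument will absorb them into $F^{-1}(0)$. The honest resolution is that \cref{prop:conjSU2submanifold}, like \cref{prop:conjlocuskernelSU2}, is a statement about the conjugate locus within $\{H \neq 0\}$; the paper's own proof makes this restriction silently by writing $\mathrm{Conj}_q(\mathrm{SU}(2)) = C^0 \cup C^1$ without further comment, so you are not missing anything the paper supplies — but you should state the restriction rather than hope the degenerate axis goes away.
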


\begin{proof}
The conjugate locus is a disjoint union of two sets: $\mathrm{Conj}_q(\mathrm{SU}(2)) = C^0 \cup C^1$ where
\[
C^0 := \left\{\lambda_0 \in \T^*_q(\mathrm{SU}(2)) \mid |\lambda_0| \cos \left(\frac{|\lambda_0|}{2}\right)-2 \sin \left(\frac{|\lambda_0|}{2}\right) = 0 \right\}
\]
and
\[
C^1 := \left\{\lambda_0 \in \T^*_q(\mathrm{SU}(2)) \mid \sin\left(\frac{|\lambda_0|}{2}\right) = 0 \right\}.
\]
As in \cref{prop:conjlocusGa}, we use the implicit function theorem, showing that 0 is a regular value of $f_0(\lambda_0) =  |\lambda_0| \cos \left(\frac{|\lambda_0|}{2}\right)-2 \sin \left(\frac{|\lambda_0|}{2}\right)$ (resp. of $f_1(\lambda_0) = \sin\left(\frac{|\lambda_0|}{2}\right)$) for $C^0$ (resp. $C^1$). We find that
\begin{equation}
    \label{eq:tangentconjlocusSU2C0}
    \diff_{\lambda_0} f_0 = - \frac{1}{2} \sin\left(\frac{|\lambda_0|}{2}\right) (u_0 \diff u|_{\lambda_0} + v_0 \diff v|_{\lambda_0} + w_0 \diff w|_{\lambda_0}) \neq 0 \text{ in } C^0
\end{equation}
and
\[
\diff_{\lambda_0} f_1 = \frac{1}{2 |\lambda_0|} \cos\left(\frac{|\lambda_0|}{2}\right) (u_0 \diff u|_{\lambda_0} + v_0 \diff v|_{\lambda_0} + w_0 \diff w|_{\lambda_0}) \neq 0 \text{ in } C^1.
\]
\end{proof}

By using singularity theory, the normal form of the exponential map of $\mathrm{SU}(2)$ can be expressed near its critical points as we have previously outlined.

\begin{theorem}
    \label{prop:normalformsSU2}
    The exponential map $\exp_q : \T^*(\mathrm{SU}(2)) \to \mathrm{SU}(2)$ in the neighbourhood of $\lambda_0 \in \mathrm{Conj}_q(\mathrm{SU}(2))$ is equivalent to $f : \R^3 \to \R^3 : (x, y, z) \to (x^2, y, z)$ for $\lambda_0$ in a non-empty dense subset of $C^0$ and to $f : \R^3 \to \R^3 : (x, y, z) \to (x z, y, z)$ when $\lambda_0 \in C^1$.
\end{theorem}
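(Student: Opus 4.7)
The plan is to treat the two strata $C^0$ and $C^1$ from \cref{prop:conjSU2submanifold} separately and, in each case, determine how $\Kern(\diff_{\lambda_0} \exp_q)$ sits relative to $\T_{\lambda_0}(\mathrm{Conj}_q(\mathrm{SU}(2)))$. By \cref{prop:conjlocuskernelSU2} every conjugate covector has order one with an explicit kernel, and by \cref{prop:conjSU2submanifold} each stratum is a smooth hypersurface with non-vanishing conormal, so every $\lambda_0 \in \mathrm{Conj}_q(\mathrm{SU}(2))$ is automatically a good singularity in the sense of \cref{def:goodsingularity}. What remains is to extract a normal form in each stratum.

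For $\lambda_0 \in C^0$, I follow the template of \cref{theo:normalformsGa}. Pairing the kernel vector from \cref{eq:kernelexpSU2} against the conormal to $C^0$ read off from \cref{eq:tangentconjlocusSU2C0} yields the scalar $4 w_0 \sin(|\lambda_0|/2)$. On $C^0$ the factor $\sin(|\lambda_0|/2)$ cannot vanish, for otherwise the defining equation of $C^0$ would force $\cos(|\lambda_0|/2) = 0$ as well, contradicting $\sin^2 + \cos^2 = 1$. Hence the fold condition fails precisely on the closed analytic subset $\{w_0 = 0\} \cap C^0$, which is nowhere dense in $C^0$. On its open dense complement $\lambda_0$ is a fold singularity, and \cref{theo:whitneyfoldpleat} delivers the normal form $(x^2, y, z)$.

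For $\lambda_0 \in C^1$ the formula \cref{eq:kernelexpSU2} collapses to $\vspan\{u_0 \partial_v - v_0 \partial_u\} = \vspan\{\partial_\theta\}$, which is always tangent to the level set $C^1 = \{|\lambda_0| = 2 k\pi\}$; the Whitney framework therefore does not apply and a bespoke normal form must be produced. The plan is to read the structure directly off the explicit geodesic \cref{eq:SU2geodesic}. Switching to cylindrical coordinates $h_1 = \rho\cos\theta$, $h_2 = \rho\sin\theta$ and introducing the transverse coordinate $s := |\lambda_0| - 2 k\pi$, one computes $\beta(1) = (-1)^k \tfrac{\rho}{2|\lambda_0|} \, s\, \me^{\I(\theta + w_0/2)} + O(s^2)$, while $\alpha(1)$ depends smoothly only on $(w_0, s)$. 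The product of $s$ with a function of $\theta$ realises exactly the ``$xz$'' part of the claimed normal form. Identifying $x$ with the angular coordinate along the kernel direction $\partial_\theta$, $y$ with a coordinate along the vertical circle $\{\beta = 0\} \subset \mathrm{SU}(2)$ capturing $\alpha(1)$, and $z$ with $s$, one produces smooth charts in which $\exp_q$ takes the form $(xz, y, z)$.

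The main obstacle lies in this second case, where one must build the normal form by hand rather than appeal to \cref{theo:whitneyfoldpleat}. Its crux is the nondegeneracy $\partial_s \partial_\theta \beta(1)|_{s=0} \neq 0$, which by the expansion above reduces to the visibly non-zero quantity $(-1)^k \I \rho / (2|\lambda_0|) \me^{\I(\theta + w_0/2)}$, using that $\rho \neq 0$ whenever $H(\lambda_0) \neq 0$. With this nondegeneracy in place, a standard straightening argument for good singularities whose kernel is tangent to $\mathrm{Sing}(f)$ with non-degenerate linear unfolding in the transverse direction then produces the claimed $(xz, y, z)$ normal form.
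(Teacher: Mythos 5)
Your treatment of $C^0$ coincides with the paper's: pair the explicit kernel of \cref{eq:kernelexpSU2} with $\diff_{\lambda_0} f_0$ from \cref{eq:tangentconjlocusSU2C0}, observe that the result is a nonzero multiple of $w_0 \sin^2(|\lambda_0|/2)$ with $\sin(|\lambda_0|/2) \neq 0$ on $C^0$, and invoke \cref{theo:whitneyfoldpleat} away from the nowhere dense set $\{w_0 = 0\}$. For $C^1$ you take a genuinely different route to the crux. The paper verifies the key non-degeneracy abstractly, via the second-order differential $\diff^2_{\lambda_0}\exp_q$ evaluated on $\mathfrak{e}v$ for $v$ in the kernel, the explicit Jacobi fields \cref{eq:jacobifieldssu2explicit}, and the regularity property \cref{prop:kernelexp}; you instead read it off the closed-form geodesics \cref{eq:SU2geodesic}, expanding $\beta(1)$ in $s = |\lambda_0| - 2k\pi$ and checking $\partial_s \partial_\theta \beta(1)|_{s=0} \neq 0$. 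Your computation is correct and arguably more transparent; it also makes visible, at no extra cost, that the kernel integral circles $\{s = 0,\ w_0 = \mathrm{const}\}$ are collapsed to points (since $\beta(1) = 0$ there and $\alpha(1)$ is independent of $\theta$), which is the other structural input the construction needs.

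The gap is the final step. The ``standard straightening argument for good singularities whose kernel is tangent to $\mathrm{Sing}(f)$ with non-degenerate linear unfolding'' is not an off-the-shelf theorem: $(x, y, z) \mapsto (xz, y, z)$ is not a stable singularity and does not appear in the generic Whitney--Thom classification (generically, tangency of the kernel to the singular set produces cusps, governed by a third-order condition, not this normal form). What you are implicitly invoking is precisely the chart construction of \cite[Theorem 3.3.]{warner1965}, which the paper reproduces in detail: Frobenius coordinates $\xi$ adapted to the kernel foliation of $C^1$, target coordinates $\eta$ adapted to the image as in \cref{slice1} and \cref{slice2}, the intermediate charts $\omega$ and $\Upsilon$ of \cref{omegacoord} and \cref{upsiloncoord}, the factorisation $\Upsilon_3 \circ \exp_q = \omega_1 \chi_3$, and the verification that $(\chi_1, \chi_2, \chi_3)$ is a chart --- which is exactly where your mixed second derivative enters. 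Since your explicit expansion supplies all the required inputs, carrying this construction out (or citing Warner's theorem with its hypotheses explicitly checked) would complete the argument; as written, the assembly into $(xz, y, z)$ is asserted rather than proved.
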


\begin{proof}
    \cref{prop:conjSU2submanifold} establishes that every conjugate covector of $\mathrm{SU}(2)$ is a \textit{good singularity} in the sense of Whitney.

    When $\lambda_0 \in C^0$, we observe with the explicit expressions of \cref{eq:kernelexpSU2} and \cref{eq:tangentconjlocusSU2C0} that
    \[
    \dim \bigl[ \Kern(\diff_{\lambda_0} \exp_q) \cap \T_{\lambda_0}(\mathrm{Conj}_q(\mathrm{SU}(2)) \bigr] = 0
    \]
    if $w_0 \neq 0$. Indeed, we have
    \[
    \diff_{\lambda_0}f_0 \left[|\lambda_0| \cos \left(\frac{|\lambda_0|}{2}\right) (u_0 \partial_v - v_0 \partial_u) + 4 \sin \left(\frac{|\lambda_0|}{2}\right) \partial_w\right] = -2 w_0 |\lambda_0| \sin^2 \left(\frac{|\lambda_0|}{2}\right),
    \]
    which does not vanish.
    This means that $\lambda_0 \in C^0$ with $w_0 \neq 0$ is a \textit{fold singularity} of $\exp_q$ and the normal form follows from \cref{theo:whitneyfoldpleat}. The subset of $\lambda_0 \in C^0$ such that $w_0 = 0$ is clearly nowhere dense in $C^0$.

    When $\lambda_0 \in C^1$, the equation
    \[
    \diff_{\lambda_0}f_1 \left[u_0 \partial_v - v_0 \partial_u\right] = 0
    \]
    holds, and therefore we have that
    \[
    \dim \bigl[ \Kern(\diff_{\lambda_0} \exp_q) \cap \T_{\lambda_0}(\mathrm{Conj}_q(\mathrm{SU}(2)) \bigr] = 1
    \]
    for all $\lambda_0 \in C^1$. This type of singularity has not been previously discussed in this work and it must be constructed by hand, following the idea from \cite[Theorem 3.3.]{warner1965}.
    
    We begin by choosing coordinates $\xi = (\xi_1, \xi_2, \xi_3) : \mathcal{U} \to \R^3$ adapted to the 2-dimensional manifold $C^1$ and centered at $\lambda_0$, that is to say $\xi(\lambda_0) = 0$ and 
    \[
    C^1 \cap \mathcal{U} = \left\{\overline{\lambda}_0\in \T^*_q(\mathrm{SU}(2)) \cap \mathcal{U}  \mid \xi_1(\overline{\lambda}_0) = 0 \right\}.
    \]The family of spaces given by $\mathrm{Ker}(\diff_{\lambda_0} \exp_q)$ forms an involutive distribution of $C^1$ and by Frobenius' theorem the coordinate chart $(\mathcal{U}, \xi)$ can be chosen such that the integral manifolds of this distribution are the submanifolds
    \[
    \left\{ \overline{\lambda}_0 \in C^1 \cap \mathcal{U} \mid \xi_1(\overline{\lambda}_0) = 0, \xi_2(\overline{\lambda}_0) = c \right\},
    \] 
    where $c$ is a constant. We also choose a coordinate chart $\eta = (\eta_1, \eta_2, \eta_3) : \mathcal{V} \to \R^3$ centered at $\exp_q(\lambda_0)$ that is also adapted to the action of $\exp_q$ on $C^1$ and on the integral manifolds of $C^1$. Firstly, it is chosen such that
    \begin{equation}
        \label{slice1}
        \exp_q(C^1 \cap \mathcal{U}) \subseteq \left\{ p \in \mathrm{SU}(2) \cap \mathcal{V} \mid \eta_1(p) = 0 \right\}.
    \end{equation}
    Secondly, each integral manifold of the distribution is sent to a point in $\mathrm{SU}(2)$. The chart $(\eta, \mathcal{V})$ can therefore also be chosen such that
    \begin{multline}
    \label{slice2}
    \exp_q\left(\left\{ \overline{\lambda}_0 \in \T^*_q(\mathrm{SU}(2)) \cap \mathcal{U} \mid \xi_1(\overline{\lambda}_0) = 0, \xi_3(\overline{\lambda}_0) = 0 \right\}\right)  \\
    = \left\{ p \in \mathrm{SU}(2) \cap \mathcal{V} \mid \eta_1(p) = 0, \eta_3(p) = 0 \right\}.
    \end{multline}
    Thirdly, it finally satisfies
    \begin{equation}
        \label{slice3} \diff_{\lambda_0} \exp_q \left( \frac{\partial}{\partial \xi_1}\right) =  \frac{\partial}{\partial \eta_1}, \ \diff_{\lambda_0} \exp_q \left( \frac{\partial}{\partial \xi_2}\right) = \frac{\partial}{\partial \eta_2}, \text{ and } \diff_{\lambda_0} \exp_q \left( \frac{\partial}{\partial \xi_3}\right) = 0.
    \end{equation}
With this choice of coordinates, specifically because of of \cref{slice3}, the Taylor's expansion at $(0, 0, 0)$ of $(\exp_q)_{\xi \eta} := \eta \circ \exp_q \circ \xi^{-1}$, writes as:
\[
(\exp_q)^1_{\xi \eta} = x + R_1, \ (\exp_q)^2_{\xi \eta} = y + R_2, \text{ and } (\exp_q)^3_{\xi \eta} = R_3,
\]
where the integral remainders $R_1, R_2,$ and $R_3$ have a zero of order 2 at $(0, 0, 0)$. A new coordinate chart $\omega = (\omega_1, \omega_2, \omega_3)$ in the neighbourhood of $\lambda_0$ can thus be introduced as
\begin{equation}
\label{omegacoord}
    \omega_1 := \eta_1 \circ \exp_q = \xi_1 + R_1 \circ \xi, \ \omega_2 := \eta_2 \circ \exp_q = \xi_2 + R_2 \circ \xi, \ \omega_3 := \xi_3.
\end{equation}
This indeed forms a valid charts because the order of vanishing of $R_1 \circ \xi$ and $R_2 \circ \xi$ is no less than 2. 

Consider now the projection $\pi_{3, 2} : \R^3 \to \R^3 : (x, y, z) \mapsto (x, y, 0)$ and define new coordinates $\Upsilon = (\Upsilon_1, \Upsilon_2, \Upsilon_3)$ around $\exp_q(\lambda_0)$ as
\begin{equation}
\label{upsiloncoord}
    \Upsilon_1 = \eta_1, \ \Upsilon_2 = \eta_2, \, \Upsilon_3 = \eta_3 - \eta_3 \circ \exp_q \circ \omega^{-1} \circ \pi_{3, 1} \circ \eta.
\end{equation}
Again, this is a well-defined coordinates chart because the order of vanishing at $(0, 0, 0)$ of $R_3$ is greater than 2. The reason for this choice of coordinate chart will become apparent later. From \cref{slice1}, \cref{slice2}, \cref{omegacoord}, and \cref{upsiloncoord}, it is clear that
\[
\Upsilon_3 \circ \exp_q(\overline{\lambda}_0) = 0 \text{ whenever } \omega_1(\overline{\lambda}_0) = 0.
\]
There must exists a function $\chi_3$ such that $\Upsilon_3 \circ \exp_q = \omega_1 \chi_3 $ on a neighbourhood of $\lambda_0$, satisfying $\chi_3(\lambda_0) = 0$ because of \cref{omegacoord} and since the order of $R_3$ at $(0, 0, 0)$ is no less than 2. Finally, let $\chi_1 := \omega_1$, $\chi_2 := \omega_2$. It remains to prove that $\chi = (\chi_1, \chi_2, \chi_3)$ is a well-defined coordinate chart on a neighbourhood of $\lambda_0$ since if that's the case, then the expression of $\exp_q$ in the coordinates $\chi$ and $\Upsilon$ is
\[
(\exp_q)_{\chi \Upsilon} (x, y, z) = (x y, y, z).
\]
The remaining of this proof will therefore focus on establishing that the matrix $(\partial \chi_i / \partial \omega_j (\lambda_0))$ is invertible, which here is equivalent to proving that $\partial \chi_3 / \partial \omega_3 (\lambda_0) = \partial^2 (\Upsilon_3 \circ \exp_q)/\partial \omega_1 \partial \omega_3 (\lambda_0) \neq 0$. This property that needs to be established is on the second derivative of $\exp_q$. Let us start by compute all the other first and second partial derivatives. From \cref{omegacoord}, we can already say that
\[
\frac{\partial (\Upsilon_1 \circ \exp_q)}{\partial \omega_1}(\lambda_0) = 1, \text{ and } \frac{\partial (\Upsilon_2 \circ \exp_q)}{\partial \omega_2}(\lambda_0) = 1,
\]
while the partial derivatives of second order of $\Upsilon_1 \circ \exp_q$ and $\Upsilon_2 \circ \exp_q$ are zero. Since $\lambda_0$ is a zero of order 2 of $\Upsilon_3 \circ \exp_q$, we have that all the partial derivatives of first order of $\Upsilon_3 \circ \exp_q$ are zero at $\lambda_0$.
For partial derivatives of second order, we compute first
\[
\frac{\partial^2 (\Upsilon_3 \circ \exp_q)}{\partial \omega_2^2}(\lambda_0) = \frac{\partial^2 (\Upsilon_3 \circ \exp_q)}{\partial \omega_3^2}(\lambda_0) = \frac{\partial^2 (\Upsilon_3 \circ \exp_q)}{\partial \omega_2 \partial \omega_3}(\lambda_0) =0,
\]
from the fact that $\Upsilon_3 \circ \exp_q = \omega_1 \chi_3$ and $\omega_1(\lambda_0) = 0$. Secondly, the specific choice in \cref{upsiloncoord} implies that
\[
\frac{\partial^2 (\Upsilon_3 \circ \exp_q)}{\partial \omega_1^2}(\lambda_0) = \frac{\partial^2 (\Upsilon_3 \circ \exp_q)}{\partial \omega_1 \partial \omega_2}(\lambda_0) =0.
\]
In order to conclude, consider the second-order differential (see \cite[Section 1.26]{warnerbook}) of $\exp_q$ at $\lambda_0$:
\begin{equation}
    \label{eq:secondorderexp}
    \diff_{\lambda_0}^2 \exp_q : \T_{\lambda_0}^2(\T^*_q(\mathrm{SU}(2))) \to \T^2_{\exp_q(\lambda_0)}(\mathrm{SU}(2)),
\end{equation}
defined as $\diff_{\lambda_0}^2 \exp_q[v](f) := v(f \circ \exp_q)$ for all smooth function $f \in \mathcal{C}^{\infty}$ and all second-order tangent vectors $v \in \T_{\lambda_0}^2(\T^*_q(\mathrm{SU}(2)))$. Now, a short computation (see also \cite[Lemma 2.1.]{warner1965}) shows that if $\mathfrak{e}(\lambda_0)$ is the Euler vector field at $\lambda_0$ and $v \in \mathrm{Ker}(\diff_{\lambda_0} \exp_q)$, that is, by \cref{eq:kernelconjugatelocus}, $v = p_a(0) E_a(0) + p_b(0) E_b(0) + p_c(0) E_c(0)$ where $(p(t), x(t))$ satisfies \cref{eq:jacobifields} with $x(0) = x(1) = 0$, then
\begin{multline*}
    \label{slice2}
    \diff_{\lambda_0}^2 \exp_q[ \mathfrak{e} v (\lambda_0)] = \dot{x}_a(1) X_a(1) + \dot{x}_b(1) X_b(1) + \dot{x}_c(1) X_c(1) \\ + \diff_{\lambda_0}\exp_q\left[\T_{\lambda_0}(\T^*_q(\mathrm{SU}(2)))\right].
    \end{multline*}

We claim that $\diff_{\lambda_0}^2 \exp_q[ \mathfrak{e} v (\lambda_0)] \notin \diff_{\lambda_0}\exp_q\left[\T_{\lambda_0}(\T^*_q(\mathrm{SU}(2)))\right]$. By \cref{eq:jacobifieldssu2explicit} and \cref{prop:conjlocuskernelSU2} we can make an explicit computation: if $v = u_0 \partial_v - v_0 \partial_u \in \Kern(\diff_{\lambda_0} \exp_q)$, then $\dot{x}_a(1) X_a(1) + \dot{x}_b(1) X_b(1) + \dot{x}_c(1) X_c(1) = X_a(1)$. Now, with \cref{prop:imageexp}, we find that
\[
\diff_{\lambda_0}\exp_q\left[\T_{\lambda_0}(\T^*_q(\mathrm{SU}(2)))\right] = \vspan\left\{ |\lambda_0|^2 X_b(1) - X_c(1) \right\},
\]
proving the claim.

Therefore, calling $\alpha : \T^2_{\exp_q(\lambda_0)}(\mathrm{SU}(2)) \to \T_{\exp_q(\lambda_0)}(\mathrm{SU}(2))$ the projection of the space of second-order tangent vector onto the space of first-order tangent vector, we must have that the linear map $\alpha \circ \diff_{\lambda_0}^2 \exp_q : \T_{\lambda_0}^2(\T^*_q(\mathrm{SU}(2))) \to \T_{\exp_q(\lambda_0)}(\mathrm{SU}(2))$ is a surjection. The matrix of this map in the basis induced by the coordinates $\omega$ around $\lambda_0$ and $\Upsilon$ around $\exp_q(\lambda_0)$ is given by the matrix of first and second partial derivatives of $\Upsilon_i \circ \exp_q$ with respect to $\omega_j$ at $\lambda_0$ and by the computations above it is equal to
\[
    \begin{pmatrix}
			1 & 0 & 0 & \cdots & 0 & 0 \\
			0 & 1 & 0 & \cdots & 0 & 0 \\
			0 & 0 & 0 & \cdots & 0 & \frac{\partial^2 (\Upsilon_3 \circ \exp_q)}{\partial \omega_1 \partial \omega_3}(\lambda_0)
    \end{pmatrix},
\]
which must have rank 3 by surjectivity. This implies that 
\[
\partial^2 (\Upsilon_3 \circ \exp_q)/\partial \omega_1 \partial \omega_3 (\lambda_0) \neq 0
\]
and the proof is complete.
\end{proof}

As a direct consequence of the normal forms identified in \cref{prop:normalformsSU2}, we can derive a result similar to \cref{prop:localnoninjGa} for $\mathrm{SU}(2)$.

\begin{proposition}
    \label{prop:localnoninjSU2}
    The exponential map $\exp_q : \T^*_q(\mathrm{SU}(2)) \to \mathrm{SU}(2)$ fails to be injective in any neighbourhood of conjugate covector $\lambda_0 \in \mathrm{Conj}_q(\mathrm{SU}(2))$.
\end{proposition}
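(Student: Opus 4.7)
The plan is to argue directly from the two normal forms furnished by \cref{prop:normalformsSU2}, following exactly the template of the proof of \cref{prop:localnoninjGa}. I would split $\mathrm{Conj}_q(\mathrm{SU}(2)) = C^0 \cup C^1$ and treat each piece separately, then handle the small exceptional subset of $C^0$ (namely the points where $w_0 = 0$, on which no normal form has been supplied) by a density argument.

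First I would dispatch the ``generic'' cases. For $\lambda_0$ in the dense subset of $C^0$ identified in \cref{prop:normalformsSU2}, the exponential is equivalent near $\lambda_0$ to $(x, y, z) \mapsto (x^2, y, z)$, which is not injective on any neighbourhood of the origin since $(x, y, z)$ and $(-x, y, z)$ collide. As a local diffeomorphic change of coordinates preserves (non-)injectivity, $\exp_q$ fails to be injective on every neighbourhood of such $\lambda_0$. For $\lambda_0 \in C^1$, the normal form becomes $(x, y, z) \mapsto (x z, y, z)$; here non-injectivity is even more blatant, as on the slice $\{z = 0\}$ the map is constant in $x$, so any two distinct points $(x_1, y, 0)$ and $(x_2, y, 0)$ map to the same point $(0, y, 0)$. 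Again this transfers to $\exp_q$ through the diffeomorphisms provided by \cref{prop:normalformsSU2}.

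Finally I would treat the nowhere-dense subset of $\lambda_0 \in C^0$ with $w_0 = 0$, for which \cref{prop:normalformsSU2} does not directly supply a normal form. Given such a $\lambda_0$ and any open neighbourhood $\mathcal{U}$ of it, the density (within $C^0$) of the ``good fold'' locus yields some $\lambda_0' \in \mathcal{U}$ at which the normal form $(x^2, y, z)$ applies. By the generic case, $\exp_q$ is not injective on every sufficiently small neighbourhood $\mathcal{V}$ of $\lambda_0'$, and shrinking $\mathcal{V}$ we can assume $\mathcal{V} \subseteq \mathcal{U}$. Hence $\exp_q$ is not injective on $\mathcal{U}$ either.

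There is no real obstacle in this argument: the entire content is the immediate translation, via the diffeomorphisms of \cref{prop:normalformsSU2}, of the elementary fact that both $x \mapsto x^2$ and $(x, z) \mapsto x z$ are locally non-injective at the origin, supplemented by a standard density trick for the points of $C^0$ with $w_0 = 0$. The only mild subtlety is to ensure that the witness neighbourhood $\mathcal{V}$ in the density step can be taken inside $\mathcal{U}$, which is automatic by shrinking.
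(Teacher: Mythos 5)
Your proposal is correct and follows essentially the same route as the paper: non-injectivity of the fold normal form on the dense part of $C^0$, a density argument for the points of $C^0$ with $w_0 = 0$, and the normal form $(x,y,z) \mapsto (xz,y,z)$ on $C^1$ (the paper also notes the alternative of observing that the integral manifolds of $\Kern(\diff \exp_q)$ collapse to points). Your write-up of the density step is in fact more explicit than the paper's one-line remark.
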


\begin{proof}
For $\lambda_0 \in C^0$ with $w_0$, this is direct from the normal form: $\lambda_0$ is a Whitney fold of $\exp_q$. If $w_0 = 0$, the injectivity still fails by density.

When $\lambda_0 \in C^1$, the normal forms of $\exp_q$ also implies the result. Alternatively, on may argue without referencing the normal form, by noting that the integral manifolds of $\mathrm{Ker}(\diff \exp_q)$ are collapsed to points under $\exp_q$.
\end{proof}

\section{The special linear group \texorpdfstring{$\mathrm{SL}(2)$}{SL(2)}}

\label{sec:sl2}

The normal forms of the exponential map of $\mathrm{SL}(2)$ closely resemble those of $\mathrm{SU}(2)$ examined previously. This section will therefore only provide a concise overview of the analysis, without reiterating all the details.

The Lie group $\mathrm{SL}(2)$ is the group of $2 \times 2$ real matrices defined as
\[
\mathrm{SL}(2) = \left\{ 
M \in \mathds{R}^{2 \times 2}
\mid \det(M) = 1
\right\}.
\]

The Lie algebra $\mathfrak{sl}(2)$ of $\mathrm{SL}(2)$ is naturally identified with the space of traceless $2 \times 2$ real matrices:
\[
\mathfrak{sl}(2) = \left\{ 
\begin{pmatrix}
a & b \\
c & -a
\end{pmatrix}
\mid a, b, c \in \R
\right\} \cong \mathrm{T}_e(\mathrm{SL}(2)).
\]

A basis for $\mathfrak{su}(2)$ is given by $X_0, X_1, X_2$ where
\[
X_0 := 
\frac{1}{2}
\begin{pmatrix}
0 & -1 \\
1 & 0
\end{pmatrix}, \ 
X_1 := \frac{1}{2}
\begin{pmatrix}
1 & 0 \\
0 & -1
\end{pmatrix}\text{, and }
X_2 :=
\frac{1}{2} 
\begin{pmatrix}
0 & 1 \\
1 & 0
\end{pmatrix}.
\]
We also have the commutation relations 
\begin{equation}
    \label{eq:bracketrelationsSL2}
    [X_1, X_2] = - X_0, \ [X_2, X_0]= X_1, \text{ and } [X_0, X_1] = X_2.
\end{equation}

We construct a sub-Riemannian structure in the following way. Let $\mathbf{k} := \mathrm{span}\left\{X_0\right\}$ and $\mathbf{z} := \mathrm{span}\left\{X_1, X_2\right\}$. It can be seen that $\mathbf{k}$ is a (maximal) compact subalgebra while $\mathbf{z}$ is contained in the center of $\mathfrak{sl}(2)$. We equip $\mathfrak{sl}(2)$ with the bi-invariant metric $\langle \cdot, \cdot \rangle$ that turns $X_0, X_1, X_2$ into an othonormal basis. By left-translating $\mathbf{z}$ and the restriction $\langle \cdot, \cdot \rangle_{\mathbf{z}}$ of $\langle \cdot, \cdot \rangle$ onto $\mathbf{z}$, we obtain a well-defined sub-Riemannian structure on $\mathrm{SL}(2)$. This kind of sub-Riemannian structure on a Lie group is often called a $\mathbf{k} \oplus \mathbf{z}$ sub-Riemannian structure and the expression of the normal geodesics for such a structure are well-known (see \cite[Section 7.7.4]{comprehensive2020}). The metric $\langle \cdot, \cdot \rangle$ is used to identify vectors and covectors.

It is helpful to define the following functions: for $a \in \mathbb{R}$, set
\[
\mathsf{s}_a(t):=
    \begin{cases}
    \sin(\sqrt{a}t)/\sqrt{a}, & \text{if}\ a > 0 \\
      t, & \text{if}\ a = 0 \\
      \sinh(\sqrt{-a}t)/\sqrt{-a}, & \text{if}\ a < 0
    \end{cases}, \text{ and }
\mathsf{c}_a(t):=
    \begin{cases}
     \cos(\sqrt{a}t), & \text{if}\ a > 0 \\
      1, & \text{if}\ a = 0 \\
      \cosh(\sqrt{-a}t), & \text{if}\ a < 0
    \end{cases}.
\]
 
The sub-Riemannian Hamiltonian of this structure is given by
\[
H : \mathrm{T}^*(\mathrm{SL}(2)) \to \mathds{R} : (u X_1(M) + v X_2(M) + w X_0(M), M) \mapsto \frac{1}{2}(u^2 + v^2).
\]
Furthermore, when $\lambda_0 \in \T^*_q(\mathrm{SL}(2))$, define
\[
r_{\lambda_0} := w_0^2 - (u_0^2 + v_0^2).
\]
By left-invariance, it is enough to write the geodesics starting from the identity.
In this case, the normal extremal starting from $(q, \lambda_0) \in \T^*(\mathrm{SL}(2))$ with $q = 0$ and $\lambda_0 = u_0 X_1 + v_0 X_2 + w_0 X_0$ is
\begin{equation*}
    \label{eq:SL2geodesic}
    \left\{
    \begin{aligned}
    M(t) ={}& 
    \begin{pmatrix}
    m_1(t) & m_2(t) \\
    m_3(t) & m_4(t)
    \end{pmatrix}
    \\
    u(t) ={}& u_0 \cos(w_0 t) + v_0 w_0 \sin(w_0 t)  \\
    v(t) ={}& v_0 \cos(w_0 t) - u_0 w_0 \sin(w_0 t) \\
    w(t) ={}& w_0
    \end{aligned},
    \right.
\end{equation*}
where
\begin{align*}
    m_1(t) ={}& 
    \mathsf{s}_a(\tfrac{t}{2}) \left(u_0 \cos(\tfrac{w_0 t}{2}) +(w_0-v_0) \sin(\tfrac{w_0 t}{2})\right)+ \mathsf{c}_a(\tfrac{t}{2}) \cos(\tfrac{w_0 t}{2});\\
    m_2(t) ={}& \mathsf{s}_a(\tfrac{t}{2}) \left((v_0 - w_0) \cos(\tfrac{w_0 t}{2})+ u_0 \sin (\tfrac{w_0 t}{2})\right)- \mathsf{c}_a(\tfrac{t}{2}) \sin (\tfrac{w_0 t}{2}); \\
    m_3(t) ={}& \mathsf{s}_a(\tfrac{t}{2}) \left((v_0 + w_0) \cos (\tfrac{w_0 t}{2})+ u_0 \sin (\tfrac{w_0 t}{2})\right)- \mathsf{c}_a(\tfrac{t}{2}) \sin (\tfrac{w_0 t}{2}); \\
    m_4(t) ={}& \mathsf{s}_a(\tfrac{t}{2}) \left(-u_0 \cos (\tfrac{w_0 t}{2})+(v_0 + w_0) \sin (\tfrac{w_0 t}{2})\right)+ \mathsf{c}_a(\tfrac{t}{2}) \cos (\tfrac{w_0 t}{2}).
\end{align*}

We use the same coordinates induced from the frame $X_0, X_1, X_2$ on $\T^*(\mathrm{SL}(2))$ that were used for $\T^*(\mathrm{SU}(2))$. The canonical frame can also be computed for $\mathrm{SL}(2)$, as in \cref{prop:zelenkoliSU2}, with the relevant entries of the matrix $R(t)$ from \cref{zelenkoli} expressed as
\begin{align*}
    R_{aa}(t) ={}& \frac{1}{2 H} \sigma_{\lambda(t)}([\overrightarrow{H}, \overrightarrow{H}'], \overrightarrow{H}') = h_0^2 - 2 H  = r_{\lambda_0}, \text{ and } \\
    R_{cc}(t) ={}& \frac{1}{2 H} \sigma_{\lambda(t)}([\overrightarrow{H}, [\overrightarrow{H}, \overrightarrow{H}']], [\overrightarrow{H}, \overrightarrow{H}']) - \frac{1}{(2 H)^2} \sigma_{\lambda(t)}([\overrightarrow{H}, \overrightarrow{H}'], \overrightarrow{H}')^2 = 0.
\end{align*}

\begin{proposition}
    \label{prop:zelenkoliSL2}
    Let $\lambda(t)$ be the normal extremal in $\T^*(\mathrm{SL}(2))$ starting from $(q, \lambda_0) \in \mathrm{T}^*(\mathrm{SL}(2))$. The symplectic moving frame $(E_a, E_b, E_c, F_a, F_b, F_c)$ where
    \begin{equation*}
    \label{eq:SL2movingframe}
    \begin{aligned}
    E_a(t) ={}& \frac{1}{\sqrt{2 H}} \partial_\theta & F_a(t) ={}& \frac{1}{\sqrt{2 H}} \overrightarrow{H'}\\
    E_b(t) ={}& \frac{1}{\sqrt{2 H}} \mathfrak{e} & F_b(t) ={}& \frac{1}{\sqrt{2 H}} \overrightarrow{H}\\
    E_c(t) ={}& \frac{1}{\sqrt{2 H}} \partial_{h_0} & F_c(t) ={}& \frac{1}{\sqrt{2 H}} ([\overrightarrow{H'}, \overrightarrow{H}] + (w_0^2 - 2 H) \partial_\theta)\\
    \end{aligned}
\end{equation*}
is a canonical moving frame along $\lambda(t)$. More specifically, it satisfies the structural equations
\begin{equation*}
    \label{eq:SL2structuraleqODE}
    \begin{aligned}
    \dot{E}_a(t) ={}& - F_a(t) & \dot{F}_a(t) ={}& (w_0^2 - 2 H) E_a(t)\\
    \dot{E}_b(t) ={}& - F_b(t) & \dot{F}_b(t) ={}& 0 \\
    \dot{E}_c(t) ={}& E_a(t) & \dot{F}_c(t) ={}& 0\\
    \end{aligned}.
\end{equation*}
\end{proposition}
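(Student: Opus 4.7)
The plan is to mirror the proof of \cref{prop:zelenkoliSU2} step by step. The existence of a canonical symplectic moving frame along $\lambda(t)$ is guaranteed by \cref{zelenkoli}, whose constructive proof in \cite{zelenkoli2009} — specialised to 3D contact structures in \cite[Section 7.5]{curvature} — produces exactly the ansatz \cref{eq:SL2movingframe}. So the actual work of the proposition is twofold: (i) rewrite the relevant tautological Hamiltonian vector fields on $\T^*(\mathrm{SL}(2))$ using the $\mathfrak{sl}(2)$ bracket relations \cref{eq:bracketrelationsSL2}, and (ii) read off the structural equations \cref{eq:SL2structuraleqODE} from \cref{eq:noncanonicalmovingframe}.

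First, I would repeat the computation of $\overrightarrow{h_0}, \overrightarrow{h_1}, \overrightarrow{h_2}$ and of $\overrightarrow{H}, \overrightarrow{H'}, [\overrightarrow{H}, \overrightarrow{H'}]$ in the local frame $\tilde{X_0}, \tilde{X_1}, \tilde{X_2}, \partial_{h_0}, \partial_{h_1}, \partial_{h_2}$, exactly as was done in the lead-up to \cref{eq:HH'}, but with the sign change $[X_1, X_2] = -X_0$ in \cref{eq:bracketrelationsSL2}. This produces analogues of \cref{eq:HH'} in which the $2H$ term appearing in the $\mathrm{SU}(2)$ case is replaced by $-2H$. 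Checking that the candidate frame \cref{eq:SL2movingframe} is Darboux with $\diff_{\lambda(t)}\pi[E_i(t)] = 0$ is then a routine symplectic pairing verification, structurally identical to the $\mathrm{SU}(2)$ case; in particular, the sign flip in the definition of $E_c(t)$ (as compared with \cref{eq:SU2movingframe}) is precisely what is needed to preserve the convention $\dot{E}_c(t) = E_a(t)$ after the bracket sign has been flipped.

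Next, I would determine the matrix $R(t)$. The matrices $C_1(t)$ and $C_2(t)$ depend only on the Young diagram of the 3D contact geodesic and are therefore unchanged from the $\mathrm{SU}(2)$ case. By \cite[Proposition 7.13]{curvature}, only $R_{aa}(t)$ and $R_{cc}(t)$ can be non-zero, and both are already computed in the display preceding the statement: the sign change alters $\sigma_{\lambda(t)}([\overrightarrow{H}, \overrightarrow{H'}], \overrightarrow{H'})$ to yield $R_{aa}(t) = h_0^2 - 2H = r_{\lambda_0}$, while the cancellation that gave $R_{cc}(t) = 0$ for $\mathrm{SU}(2)$ carries over verbatim. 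Substituting $C_1, C_2, R$ into \cref{eq:noncanonicalmovingframe} produces exactly the structural equations \cref{eq:SL2structuraleqODE}.

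The only real subtlety, rather than a genuine obstacle, is the systematic bookkeeping of the sign change from \cref{eq:bracketrelationsSL2} as it propagates through the nested Lie brackets $[\overrightarrow{H}, \overrightarrow{H'}]$ and $[\overrightarrow{H}, [\overrightarrow{H}, \overrightarrow{H'}]]$ and through the symplectic pairings defining $R(t)$. Once this tracking is done consistently, the proof reduces to direct substitution into the $\mathrm{SU}(2)$ template.
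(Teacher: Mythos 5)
Your proposal is correct and follows essentially the same route as the paper: the paper likewise invokes the constructive Zelenko--Li algorithm as specialised to 3D contact structures in \cite[Section 7.5]{curvature}, notes that only $R_{aa}$ and $R_{cc}$ can be non-zero by \cite[Proposition 7.13]{curvature}, and records the computation $R_{aa}(t) = h_0^2 - 2H = r_{\lambda_0}$ and $R_{cc}(t) = 0$ arising from the sign change in \cref{eq:bracketrelationsSL2}, treating everything else as verbatim from the $\mathrm{SU}(2)$ case. Your additional remark about the sign flip in $E_c(t)$ being what preserves the convention $\dot{E}_c = E_a$ is a correct and welcome piece of bookkeeping that the paper leaves implicit.
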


The equation for the Jacobi fields of $\mathrm{SL}(2)$ follows easily from \cref{eq:jacobifields}.

\begin{proposition}
    If $(E, F)$ denotes the symplectic frame of \cref{prop:zelenkoliSL2}, then the vector field $\mathcal{J}(t) = \sum_{k \in \{a, b, c\}} p_k(t) E_k(t) + x_k(t) F_k(t)$ along $\lambda(t)$ is a Jacobi field if and only if
    \begin{equation}
    \label{jacobifieldssl2}
    \begin{pmatrix}
			\dot{p}(t) \\
			\dot{x}(t)
    \end{pmatrix}
    =
    \begin{pmatrix}
			-C_1(t) & -R(t) \\
			C_2(t) & C_1(t)\tran
    \end{pmatrix}
    \begin{pmatrix}
			p(t) \\
			x(t)
    \end{pmatrix},
    \end{equation}
    where
    \[
    C_1(t) = \begin{pmatrix}
			0 & 0 & 1 \\
			0 & 0 & 0 \\
            0 & 0 & 0
    \end{pmatrix}, \
    C_2(t) = \begin{pmatrix}
			1 & 0 & 0 \\
			0 & 1 & 0 \\
            0 & 0 & 0
    \end{pmatrix},
    \]
    \[
    R(t) = \begin{pmatrix}
			w_0^2 - 2 H & 0 & 0 \\
			0 & 0 & 0 \\
            0 & 0 & 0
    \end{pmatrix}.
    \]
\end{proposition}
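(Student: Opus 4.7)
The plan is to mirror the (one-line) proof of the analogous statement for $\mathrm{SU}(2)$: the Jacobi equation in coordinates is a direct translation of the structural equations for the canonical symplectic moving frame. Concretely, I would invoke \cref{prop:zelenkoliSL2}, which provides the ODEs satisfied along $\lambda(t)$ by the frame $(E_a, E_b, E_c, F_a, F_b, F_c)$, and then read off the matrices $C_1(t)$, $C_2(t)$, $R(t)$ appearing in the general structural equation \cref{eq:noncanonicalmovingframe}.

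Concretely, the relations $\dot{E}_a = -F_a$, $\dot{E}_b = -F_b$, and $\dot{E}_c = E_a$ give the diagonal entries $(C_2)_{aa} = (C_2)_{bb} = 1$ and the sole off-diagonal entry $(C_1)_{ac} = 1$, while $\dot{F}_a = (w_0^2 - 2H) E_a = r_{\lambda_0} E_a$ together with $\dot{F}_b = \dot{F}_c = 0$ gives $R_{aa} = r_{\lambda_0}$; all other entries vanish. Note that the matrices $C_1$ and $C_2$ are formally identical to those obtained in the $\mathrm{SU}(2)$ case, and only the entry $R_{aa}$ changes from $|\lambda_0|^2$ to $r_{\lambda_0} = w_0^2 - (u_0^2 + v_0^2)$, consistent with the sign change in the bracket relations \cref{eq:bracketrelationsSL2} compared with \cref{eq:bracketrelationsSU2}. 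Substituting these $3 \times 3$ matrices into the general Jacobi equation \cref{eq:jacobifields} produces exactly the system claimed in the proposition.

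I do not anticipate a serious obstacle. The heavy lifting, namely the construction of the canonical Zelenko-Li frame for $\mathrm{SL}(2)$ and the computation of the curvature-type invariants $R_{aa}$ and $R_{cc}$, has already been carried out in \cref{prop:zelenkoliSL2}; in particular the identification $R_{aa} = h_0^2 - 2H = r_{\lambda_0}$ is performed there. This is precisely why the parallel statement for $\mathrm{SU}(2)$ could be dispatched in a single sentence, and I would keep the present proof similarly brief.
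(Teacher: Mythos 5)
Your proposal is correct and follows exactly the paper's route: the paper itself offers no separate argument for $\mathrm{SL}(2)$ beyond noting that the Jacobi system follows from \cref{prop:zelenkoliSL2} together with the general coordinate form \cref{eq:jacobifields}, which is precisely the one-line proof given for the parallel $\mathrm{SU}(2)$ statement. Your explicit reading-off of the entries of $C_1$, $C_2$, and $R$ from the structural equations, with $R_{aa} = r_{\lambda_0}$ replacing $|\lambda_0|^2$, is exactly the intended computation.
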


These equations are solved explicitly, with an explicit dependence on $r_{\lambda_0}$:

\begin{equation}
    \label{eq:jacobifieldssl2explicit}
    \left\{
    \begin{aligned}
    p_a(t) ={}& p_{a0} \mathsf{c}_{r_{\lambda_0}}(t) - \left(a x_{a0} + p_{c0} \right) \mathsf{s}_{r_{\lambda_0}}(t)\\
    p_b(t) ={}& p_{b0}\\
    p_c(t) ={}& p_{c0} \\
    x_a(t) ={}& \frac{\left({r_{\lambda_0}} x_{a0} + p_{c0} \right) \mathsf{c}_{r_{\lambda_0}}(t) - p_{c0}}{a} +  p_{a0} \mathsf{s}_{r_{\lambda_0}}(t) \\
    x_b(t) ={}& p_{b0} t + x_{b0}\\
    x_c(t) ={}& x_{c0} + \frac{p_{a0}(1 - \mathsf{c}_{r_{\lambda_0}}(t)) - p_{c0} t + ({r_{\lambda_0}} x_{a0} + p_{c0}) \mathsf{s}_{r_{\lambda_0}}(t)}{a} \\
    \end{aligned}
    \right.
\end{equation}

As in the proof \cref{prop:conjlocuskernelSU2}, the critical points of $\mathrm{SL}(2)$ can be characterised, as well as the corresponding kernel, by studying the linear system of equations
\begin{equation*}
    \begin{pmatrix}
			\mathsf{s}_{r_{\lambda_0}}(1) & 0 & \frac{\mathsf{c}_{r_{\lambda_0}}(1)-1}{{r_{\lambda_0}}} \\
			0 & 1 & 0 \\
			\frac{1-\mathsf{c}_{r_{\lambda_0}}(1)}{{r_{\lambda_0}}} & 0 & \frac{\mathsf{s}_{r_{\lambda_0}}(1)-1}{{r_{\lambda_0}}} \\
    \end{pmatrix}
    \begin{pmatrix}
			p_{a0} \\
			p_{b0} \\
			p_{c0}
    \end{pmatrix}
    =
    \begin{pmatrix}
			0 \\
			0 \\
			0
    \end{pmatrix}.
    \end{equation*}

One interesting feature here is that the initial covectors with ${r_{\lambda_0}} \leq 0$ are never conjugate, because the determinant of the matrix above is then equal to
\[
    \sinh \left(\frac{\sqrt{-{r_{\lambda_0}}}}{2}\right) \left(\sqrt{-{r_{\lambda_0}}} \cosh \left(\frac{\sqrt{-{r_{\lambda_0}}}}{2}\right)-2 \sinh \left(\frac{\sqrt{-a{r_{\lambda_0}}}}{2}\right)\right) \neq 0.
\]
This can be easily seen from the properties of the hyperbolic functions.

\begin{proposition}
    \label{prop:conjlocuskernelSL2}
    The covector $\lambda_0 \in \T^*_q(\mathrm{SL}(2))$ is a conjugate covector of $q \in \mathrm{SL}(2)$ with $H(q, \lambda_0) \neq 0$ if and only if 
    \[
    {r_{\lambda_0}} > 0, \text{ and }
    \sin \left(\frac{\sqrt{{r_{\lambda_0}}}}{2}\right) \left(\sqrt{{r_{\lambda_0}}} \cos \left(\frac{\sqrt{{r_{\lambda_0}}}}{2}\right)-2 \sin \left(\frac{\sqrt{{r_{\lambda_0}}}}{2}\right)\right)  = 0.
    \]
    Furthermore, the conjugate covectors $\lambda_0 \in \mathrm{Conj}_q(\mathrm{SL}(2))$ are all of order one, and
    \begin{equation*}
    \label{eq:kernelexpSL2}
    \Kern(\diff_{\lambda_0} \exp_q)=  \vspan \left\{ \sqrt{{r_{\lambda_0}}} \cos \left(\frac{\sqrt{{r_{\lambda_0}}}}{2}\right) (u_0 \partial_v - v_0 \partial_u) + 4 \sin \left(\frac{\sqrt{{r_{\lambda_0}}}}{2}\right)  \partial_w \right\}.
    \end{equation*}
\end{proposition}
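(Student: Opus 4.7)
The strategy is to imitate the proof of \cref{prop:conjlocuskernelSU2} for $\mathrm{SU}(2)$, replacing the trigonometric identities by their generalized versions $\mathsf{s}_{r_{\lambda_0}}$ and $\mathsf{c}_{r_{\lambda_0}}$. By \cref{prop:jacobivsconjpt}, the covector $\lambda_0$ is conjugate if and only if there exists a nontrivial Jacobi field $\mathcal{J}(t)$ along $\lambda(t)$ with $x(0) = x(1) = 0$. From \cref{eq:jacobifieldssl2explicit}, the condition $x(0) = 0$ forces $x_{a0} = x_{b0} = x_{c0} = 0$, and evaluating at $t = 1$ reduces the problem to the linear system displayed just before the statement in the unknowns $(p_{a0}, p_{b0}, p_{c0})$. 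Conjugacy is therefore equivalent to the vanishing of the determinant of that $3 \times 3$ matrix.

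First I would treat the cases $r_{\lambda_0} \leq 0$. When $r_{\lambda_0} < 0$, a direct computation of the determinant using the definitions of $\mathsf{s}_{r_{\lambda_0}}$ and $\mathsf{c}_{r_{\lambda_0}}$ yields the expression already flagged in the text,
\[
\sinh\!\left(\tfrac{\sqrt{-r_{\lambda_0}}}{2}\right)\!\left(\sqrt{-r_{\lambda_0}}\cosh\!\left(\tfrac{\sqrt{-r_{\lambda_0}}}{2}\right) - 2\sinh\!\left(\tfrac{\sqrt{-r_{\lambda_0}}}{2}\right)\right),
\]
which is nonzero by elementary monotonicity of $\tanh$ (namely $\tanh(x)/x < 1$ for $x \neq 0$). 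The case $r_{\lambda_0} = 0$ is handled by continuity, mimicking the first case of \cref{prop:conjlocuskernelSU2}: the matrix converges to one with nonzero determinant.

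When $r_{\lambda_0} > 0$, I would factor the determinant as
\[
\frac{1}{r_{\lambda_0}^2}\,\sin\!\left(\tfrac{\sqrt{r_{\lambda_0}}}{2}\right)\!\left(\sqrt{r_{\lambda_0}}\cos\!\left(\tfrac{\sqrt{r_{\lambda_0}}}{2}\right) - 2\sin\!\left(\tfrac{\sqrt{r_{\lambda_0}}}{2}\right)\right)
\]
via a half-angle identity, giving the stated conjugacy condition. Exactly as in \cref{prop:conjlocuskernelSU2}, I split into subcases according to whether $\sin(\sqrt{r_{\lambda_0}}/2) = 0$ (with $\cos = \pm 1$) or only the second factor vanishes. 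In each subcase the kernel of the matrix is one-dimensional, so conjugate covectors have order one, and the initial condition satisfies $p_{b0} = 0$ and $p_{a0} = (p_{c0}/\sqrt{r_{\lambda_0}}) \tan(\sqrt{r_{\lambda_0}}/2)$.

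The step requiring the most care, and the only genuine departure from a pure transcription of the $\mathrm{SU}(2)$ argument, is translating the expression
\[
p_{a0} E_a(0) + p_{c0} E_c(0) \;\in\; \Kern(\diff_{\lambda_0} \exp_q)
\]
into the coordinates $(u_0, v_0, w_0)$. Using the definitions $E_a = \frac{1}{\sqrt{2H}}\partial_\theta$ and $E_c = \frac{1}{\sqrt{2H}}\partial_{h_0}$ from \cref{prop:zelenkoliSL2}, and recalling that $\partial_\theta = h_1\partial_{h_2} - h_2\partial_{h_1} = u_0\partial_v - v_0\partial_u$ at $\lambda_0$ while $\partial_{h_0} = \partial_w$, the kernel generator becomes a multiple of
\[
\tan\!\left(\tfrac{\sqrt{r_{\lambda_0}}}{2}\right)(u_0\partial_v - v_0\partial_u) + \sqrt{r_{\lambda_0}}\,\partial_w,
\]
which after clearing $\cos(\sqrt{r_{\lambda_0}}/2)$ gives precisely the formula in the statement. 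The sign difference in $E_c$ compared to the $\mathrm{SU}(2)$ frame (see \cref{prop:zelenkoliSU2} versus \cref{prop:zelenkoliSL2}) is the only point at which one must be vigilant to get the final expression right.
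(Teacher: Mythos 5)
Your proposal follows essentially the same route as the paper, which itself only sketches this result by deferring to the $\mathrm{SU}(2)$ argument: reduce via the Jacobi fields \cref{eq:jacobifieldssl2explicit} to the displayed $3\times 3$ linear system, rule out $r_{\lambda_0}\le 0$ using the hyperbolic form of the determinant (and continuity at $r_{\lambda_0}=0$), factor the determinant with a half-angle identity for $r_{\lambda_0}>0$, and read off the one-dimensional kernel while minding the sign of $E_c$ in the $\mathrm{SL}(2)$ frame. The only nitpick is that the relation $p_{a0}=(p_{c0}/\sqrt{r_{\lambda_0}})\tan(\sqrt{r_{\lambda_0}}/2)$ describes the kernel only in the subcase $\cos(\sqrt{r_{\lambda_0}}/2)\neq 0$; in the subcase $\sin(\sqrt{r_{\lambda_0}}/2)=0$ the kernel is instead $p_{b0}=p_{c0}=0$ with $p_{a0}$ free, which the displayed spanning vector nonetheless specializes to correctly.
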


Similarly to the proof of \cref{prop:conjSU2submanifold}, we can use the implicit function theorem on the functions
\begin{align*}
    f_0(\lambda_0) &= \sqrt{{r_{\lambda_0}}} \cos \left(\frac{\sqrt{{r_{\lambda_0}}}}{2}\right)-2 \sin \left(\frac{\sqrt{{r_{\lambda_0}}}}{2}\right) \\
    f_1(\lambda_0) &= \sin \left(\frac{\sqrt{{r_{\lambda_0}}}}{2}\right)
\end{align*}
and find that
\begin{equation*}
    \label{eq:tangentconjlocusSL2C0}
    \diff_{\lambda_0} f_0 = \frac{1}{2} \sin \left(\frac{\sqrt{{r_{\lambda_0}}}}{2} \right) (u_0 \diff u + v_0 \diff v - w_0 \diff w) \neq 0 \text{ in } C^0
\end{equation*}
\begin{equation*}
    \label{eq:tangentconjlocusSL2C1}
    \diff_{\lambda_0} f_1 = -\frac{1}{2 \sqrt{{r_{\lambda_0}}}}\cos \left(\frac{\sqrt{{r_{\lambda_0}}}}{2} \right) (u_0 \diff u + v_0 \diff v - w_0 \diff w) \neq 0 \text{ in } C^1
\end{equation*}
This establishes that the conjugate locus of $\mathrm{SL}(2)$ is an hypersurface of $T_q^*(\mathrm{SL}(2))$.

\begin{proposition}
\label{prop:conjSL2submanifold}
For all $q \in \mathrm{SL}(2)$, the conjugate locus $\mathrm{Conj}_q(\mathrm{SL}(2))$ is a submanifold of $\T^*_q(\mathrm{SL}(2))$ of codimension 1.
\end{proposition}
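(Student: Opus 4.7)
My plan is to argue in complete analogy with \cref{prop:conjSU2submanifold}. By \cref{prop:conjlocuskernelSL2}, every conjugate covector lies in the open set $\{r_{\lambda_0} > 0\} \subset \T^*_q(\mathrm{SL}(2))$, and the conjugate locus decomposes as the disjoint union $\mathrm{Conj}_q(\mathrm{SL}(2)) = C^0 \cup C^1$, where
\[
C^0 = \{\lambda_0 \mid r_{\lambda_0} > 0, \ f_0(\lambda_0) = 0\}, \qquad C^1 = \{\lambda_0 \mid r_{\lambda_0} > 0, \ f_1(\lambda_0) = 0\},
\]
with $f_0$ and $f_1$ the real-analytic functions displayed just before the statement. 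To conclude via the preimage theorem, it is enough to show that $0$ is a regular value of $f_0$ on $C^0$ and of $f_1$ on $C^1$.

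The first step is the computation of the differentials $\diff f_0$ and $\diff f_1$, which has already been carried out in the discussion preceding the statement: applying the chain rule to $r_{\lambda_0} = w_0^2 - u_0^2 - v_0^2$ and using the compositions $s \mapsto s\cos(s/2) - 2\sin(s/2)$ and $s \mapsto \sin(s/2)$ with $s = \sqrt{r_{\lambda_0}}$ produces the two formulas explicitly recorded there. Both are scalar multiples of the common linear form
\[
\ell = u_0 \diff u|_{\lambda_0} + v_0 \diff v|_{\lambda_0} - w_0 \diff w|_{\lambda_0}.
\]

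The second step is to verify non-vanishing on each piece. On $C^0$ we cannot have $\sin(\sqrt{r_{\lambda_0}}/2) = 0$: such a vanishing would force $\cos(\sqrt{r_{\lambda_0}}/2) \neq 0$, and then $f_0 = 0$ would give $\sqrt{r_{\lambda_0}} = 0$, contradicting $r_{\lambda_0} > 0$. Similarly, on $C^1$ we automatically have $\cos(\sqrt{r_{\lambda_0}}/2) \neq 0$ by the Pythagorean identity. Hence on each set the relevant scalar prefactor is non-zero, and it remains to observe that $\ell$ itself vanishes only when $(u_0, v_0, w_0) = (0, 0, 0)$, which is incompatible with the standing hypothesis $H(q, \lambda_0) \neq 0$ used to define the conjugate locus.

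I do not expect any genuine obstacle in this argument: the content is essentially a routine chain-rule computation followed by a short case analysis, exactly as in the proof of \cref{prop:conjSU2submanifold}. The only mild subtlety is the presence of the factor $\sqrt{r_{\lambda_0}}$ inside the functions $f_0$ and $f_1$, which is handled cleanly by restricting to the open subset $\{r_{\lambda_0} > 0\}$ from the outset (so that $\sqrt{r_{\lambda_0}}$ is a smooth function there) and using the hyperbolic-function argument already stated before the proposition to discard the region $r_{\lambda_0} \leq 0$.
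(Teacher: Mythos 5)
Your argument is correct and coincides with the paper's own proof: the paper likewise splits $\mathrm{Conj}_q(\mathrm{SL}(2))$ into $C^0 \cup C^1$ inside the region $r_{\lambda_0}>0$, applies the preimage theorem to the same functions $f_0$ and $f_1$, and checks that their differentials (both scalar multiples of $u_0\diff u + v_0\diff v - w_0\diff w$) are nonzero on the respective pieces. Your additional remarks on why the trigonometric prefactors cannot vanish on $C^0$ and $C^1$ only make explicit what the paper leaves implicit.
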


Finally, the methods of singularity theory can be applied to find the same normal forms of the exponential map of $\mathrm{SL}(2)$ as was done in \cref{prop:normalformsSU2}.

\begin{theorem}
    \label{prop:normalformsSL2}
    The exponential map $\exp_q : \T^*(\mathrm{SL}(2)) \to \mathrm{SL}(2)$ in the neighbourhood of $\lambda_0 \in \mathrm{Conj}_q(\mathrm{SL}(2))$ is equivalent to $f : \R^3 \to \R^3 : (x, y, z) \to (x^2, y, z)$ when $\lambda_0 \in C^0$ and to $f : \R^3 \to \R^3 : (x, y, z) \to (x z, y, z)$ when $\lambda_0 \in C^1$.
\end{theorem}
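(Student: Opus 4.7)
The plan is to mirror the proof of \cref{prop:normalformsSU2} essentially line by line, substituting $r_{\lambda_0}$ for $|\lambda_0|^2$ where appropriate. By \cref{prop:conjSL2submanifold}, every $\lambda_0 \in \mathrm{Conj}_q(\mathrm{SL}(2))$ is a good singularity, so the classification reduces to deciding whether $\Kern(\diff_{\lambda_0}\exp_q)$ is transverse to $\T_{\lambda_0}(\mathrm{Conj}_q(\mathrm{SL}(2)))$ or tangent to it, and this depends on which component ($C^0$ or $C^1$) contains $\lambda_0$.

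For $\lambda_0 \in C^0$, the strategy is to evaluate $\diff_{\lambda_0} f_0$ on the generator of $\Kern(\diff_{\lambda_0}\exp_q)$ given in \cref{prop:conjlocuskernelSL2}. A direct computation yields
\[
\diff_{\lambda_0} f_0 \bigl[\sqrt{r_{\lambda_0}} \cos(\sqrt{r_{\lambda_0}}/2)(u_0 \partial_v - v_0 \partial_u) + 4 \sin(\sqrt{r_{\lambda_0}}/2)\partial_w\bigr] = -2 w_0 \sin^2(\sqrt{r_{\lambda_0}}/2),
\]
which never vanishes on $C^0$: indeed, $r_{\lambda_0} > 0$ forces $w_0^2 > u_0^2 + v_0^2 \geq 0$ and hence $w_0 \neq 0$, while $\sin(\sqrt{r_{\lambda_0}}/2) = 0$ would reduce $f_0(\lambda_0) = 0$ to $\sqrt{r_{\lambda_0}}\cos(\sqrt{r_{\lambda_0}}/2) = 0$, an impossibility. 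Consequently every $\lambda_0 \in C^0$ is a fold singularity, and \cref{theo:whitneyfoldpleat} provides the normal form $(x, y, z) \mapsto (x^2, y, z)$. This is stronger than the analogous statement for $\mathrm{SU}(2)$ precisely because positivity of $r_{\lambda_0}$ automatically excludes the bad locus $\{w_0 = 0\}$ that had to be removed there.

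For $\lambda_0 \in C^1$, the condition $\sin(\sqrt{r_{\lambda_0}}/2) = 0$ collapses the kernel generator to a nonzero multiple of $u_0 \partial_v - v_0 \partial_u$, and the explicit formula for $\diff_{\lambda_0} f_1$ shows it annihilates this vector. Hence $\Kern(\diff_{\lambda_0}\exp_q) \subseteq \T_{\lambda_0}(\mathrm{Conj}_q(\mathrm{SL}(2)))$ uniformly on $C^1$, placing us in the saddle-like case. I would then replay verbatim the construction used for $C^1$ in \cref{prop:normalformsSU2}: introduce Frobenius-adapted coordinates $\xi$ around $\lambda_0$, matching coordinates $\eta$ around $\exp_q(\lambda_0)$ trivialising $\diff_{\lambda_0}\exp_q$ on the relevant slices, pass to the rectified intermediate charts $\omega$ and $\Upsilon$ that absorb the lower-order Taylor coefficients, and finally unfold the map into $(x, y, z) \mapsto (xz, y, z)$ through the substitution $\chi_3 := (\Upsilon_3 \circ \exp_q)/\omega_1$.

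The principal obstacle, identical to the one in \cref{prop:normalformsSU2}, is proving the nonvanishing of the mixed second derivative $\partial^2(\Upsilon_3 \circ \exp_q)/\partial\omega_1 \partial\omega_3$ at $\lambda_0$. This reduces through the second-order differential of $\exp_q$ and \cref{prop:kernelexp} to showing that for $v = u_0 \partial_v - v_0 \partial_u \in \Kern(\diff_{\lambda_0}\exp_q)$, the vector $\sum_i \dot{x}_i(1) F_i(1)$ extracted from the associated Jacobi field does not lie in $\diff_{\lambda_0}\exp_q[\T_{\lambda_0}(\T^*_q(\mathrm{SL}(2)))]$. Using \cref{prop:imageexp} together with the explicit Jacobi field formulas \cref{eq:jacobifieldssl2explicit}, the target subspace reduces to $\vspan\{r_{\lambda_0} F_b(1) - F_c(1)\}$; since $r_{\lambda_0} > 0$ holds throughout $C^1$, the verification never enters the hyperbolic regime of $\mathsf{s}_{r_{\lambda_0}}$ and $\mathsf{c}_{r_{\lambda_0}}$, so the computation is a one-line comparison mirroring the $\mathrm{SU}(2)$ argument.
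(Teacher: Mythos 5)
Your proposal is correct and follows essentially the same route as the paper, which itself proves this theorem simply by invoking the $\mathrm{SU}(2)$ argument verbatim with $r_{\lambda_0}$ in place of $|\lambda_0|^2$; your transversality computation $-2 w_0 \sin^2(\sqrt{r_{\lambda_0}}/2) \neq 0$ on $C^0$ and the tangency of the kernel on $C^1$ match the paper's reasoning. Your observation that $r_{\lambda_0} > 0$ forces $w_0 \neq 0$, so that \emph{all} of $C^0$ consists of fold singularities rather than just a dense subset, is exactly the remark the paper makes immediately after the theorem statement.
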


Since a covector $\lambda_0 \in \T^*_{q}(\mathrm{SL}(2))$ with $w_0 = 0$ is never conjugate, all conjugate covectors of $\lambda_0 \in C^0$ are fold singularities of $\exp_q$, not just a dense subset of $C^0$.

\begin{proposition}
    \label{prop:localnoninjSL2}
    The exponential map $\exp_q : \T^*_q(\mathrm{SU}(2)) \to \mathrm{SU}(2)$ fails to be injective in any neighbourhood of conjugate covector $\lambda_0 \in \mathrm{Conj}_q(\mathrm{SU}(2))$.
\end{proposition}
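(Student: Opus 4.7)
The plan is to read this off from the normal forms already obtained in \cref{prop:normalformsSL2}, exactly as was done in the proof of \cref{prop:localnoninjSU2}. I note first that despite the typographical occurrence of $\mathrm{SU}(2)$ in the statement, the assertion is about $\mathrm{SL}(2)$: the proof strategy is to transfer the local non-injectivity of the two Whitney-type models to $\exp_q$ via the diffeomorphic change of coordinates supplied by the normal form.

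More precisely, I would split the argument according to which stratum of $\mathrm{Conj}_q(\mathrm{SL}(2)) = C^0 \cup C^1$ the covector $\lambda_0$ belongs to. In the case $\lambda_0 \in C^0$, \cref{prop:normalformsSL2} says that $\exp_q$ is equivalent, near $\lambda_0$, to the Whitney fold $f(x,y,z)=(x^2,y,z)$; this last map identifies $(x,y,z)$ with $(-x,y,z)$ on every neighbourhood of the origin and is therefore not injective on any such neighbourhood, and the same conclusion transfers to $\exp_q$ through the diffeomorphisms realising the equivalence. Here no density argument is needed, unlike the $\mathrm{SU}(2)$ case, because the author has already observed right after \cref{prop:normalformsSL2} that the exceptional set $\{w_0=0\}$ contains no conjugate covector, so \emph{every} element of $C^0$ is genuinely a fold singularity.

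In the case $\lambda_0 \in C^1$, \cref{prop:normalformsSL2} instead gives the model $f(x,y,z)=(xz,y,z)$. This map collapses the entire plane $\{z=0\}$ to the line $\{(0,y,0)\}$, hence fails to be injective in every neighbourhood of the origin; pulling this back by the equivalence yields the same property for $\exp_q$ near $\lambda_0$. As an alternative, one can avoid the normal form entirely and observe, exactly as at the end of the proof of \cref{prop:localnoninjSU2}, that the integral leaves of the involutive distribution $\Kern(\diff \exp_q)$ inside $C^1$ are collapsed to points by $\exp_q$, which already prevents local injectivity.

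I do not anticipate a genuine obstacle here, since the only nontrivial work has already been absorbed into \cref{prop:normalformsSL2}; the mild point worth emphasising is the absence of a density step in the $C^0$ case, which is a genuine simplification with respect to $\mathrm{SU}(2)$ and comes from the fact that $r_{\lambda_0}>0$ is automatic at conjugate covectors in $\mathrm{SL}(2)$.
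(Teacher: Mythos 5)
Your proposal is correct and matches the paper's intended argument: the paper states this proposition without proof (the $\mathrm{SL}(2)$ section is deliberately abbreviated), and the implicit proof is exactly the one you give, mirroring the proof of \cref{prop:localnoninjSU2} by reading non-injectivity off the two normal forms of \cref{prop:normalformsSL2}. You also correctly note the typo ($\mathrm{SU}(2)$ for $\mathrm{SL}(2)$) and the genuine simplification that no density step is needed in the $C^0$ case, which the paper itself points out right after \cref{prop:normalformsSL2}.
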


\section{Conclusion and open problems}

\label{sec:conclusion}

As seen in the previous sections, it can be concluded that an accurate analysis of the sub-Riemannian exponential map near critical points is indeed possible, at least for some specific examples.

Let's mention a type of critical point that we haven't discussed yet. It was proven in \cite[Theorem 1]{borza2022}, generalising the continuity property in Riemannian or Finsler geometry \cite[Definition 1 (R3)]{warner1965}, that for each non-trivial covector $\lambda_0$ conjugate to $q \in M$, there exists a convex neighborhood $\mathcal{U}$ of $\lambda_0$ in $\T^*_q(M)$ such that the number of singularities of the exponential map $\exp_q$ (counted
with multiplicities) on $r \cap \mathcal{U}$, for each straight line $r$ passing through 0 which intersects $\mathcal{U}$, is constant and equals $\dim(\mathrm{Ker}(\diff_{\lambda_0} \exp_q))$. Assuming the absence of abnormal subsegments, one of two things must then be true: either there exists a neighbourhood of $\lambda_0$ such that each straight line $r$
contains at most one point in $\mathcal{U}$ which is a conjugate covector, or it does not. The first type of singularity is referred to as a regular conjugate covector, while the second is known as a singular or branching conjugate covector (see \cite[Section 3]{warner1965}). The set of regular (resp. singular) conjugate values forms the regular (resp. singular) conjugate locus. It is established in \cite[Theorem 3.1]{warner1965} that for any Riemannian or Finsler manifold, the regular conjugate locus is a generic subset of $\mathrm{Conj}_q(M)$. However, it is not clear whether this remains true in sub-Riemannian geometry when there are geodesics that are both normal and abnormal.

\begin{openproblem}
Is the sub-Riemannian regular conjugate locus a non-empty open dense subset of the conjugate locus?
\end{openproblem}

In Riemannian or Finsler geometry, the regular conjugate locus is always a submanifold of codimension 1. This is true for the examples that we have studied here (see \cref{prop:conjlocusGa},  \cref{prop:conjSU2submanifold}, and \cref{prop:conjSL2submanifold}) but it is not known whether this is true for general sub-Riemannian manifolds, even when there are no non-trivial abnormal geodesics. In \cite[Section 3.3.]{borzanonlocal}, the problem is partially addressed in the affirmative, however, it comes with the requirement of an additional technical assumption on the singularity. 

Additionally, in Riemannian and Finsler geometry, an interesting phenomenon occurs. As long as the order of $\lambda_0$ is 2 or greater and that $\lambda_0$ is a regular singularity, the kernel of $\diff_{\lambda_0} \exp_q$ is always included in the tangent space to the conjugate locus at $\lambda_0$. This very geometric property was already investigated by Whitehead in \cite{whitehead1935} and revisited by Warner in \cite[Theorem 3.2.]{warner1965}. I do not know whether this is true in sub-Riemannian geometry. This property would be very useful for studying the normal forms of the exponential map around its singularities, as shown by the proof of \cref{prop:normalformsSU2}.

\begin{openproblem}
    In sub-Riemannian geometry, is the regular conjugate locus a submanifold of $\T^*_q(M)$? Is there an integer $k$ such that $\mathrm{Ker}(\diff_{\lambda_0} \exp_q) \subseteq \T_{\lambda_0}(\T^*_q(M))$ for any regular conjugate covector $\lambda_0$ of order $\geq k$? 
\end{openproblem}

Lastly, one could expand the scope of this paper by examining the exponential map of more general sub-Riemannian structures. In \cite{agrachev1996}, the author uses a related but different approach to compute the small time asymptotics of the exponential map in the three dimensional contact case (see also \cite[Chapter 19]{comprehensive2020}). The asymptotics contain some information about the structure of the cut and the conjugate locus and they are related to curvature invariants, as per \cite{curvature}. However, for applications such as \cref{prop:localnoninjGa}, \cref{prop:localnoninjSU2}, and \cref{prop:localnoninjSL2}, these asymptotics alone may not be sufficient.

\begin{openproblem}
    Extend the analysis of this work to more general classes of sub-Riemannian structures, such as 3D contact structures.
\end{openproblem}

One potential strategy is to initially investigate 3D left-invariant structures, the classification of which can be found in \cite[Section 17.5]{comprehensive2020}. In \cite[Section 4]{borza2021}, the analysis is done for the three-dimensional Heisenberg group, and the current paper has discussed $\mathrm{SU}(2)$ in \cref{sec:su2}, and $\mathrm{SL}(2)$ in \cref{sec:sl2}.

The exponential map failing to be injective in the neighbourhood of any critical point, as demonstrated in \cref{prop:localnoninjGa},  \cref{prop:localnoninjSU2} and \cref{prop:localnoninjSL2}, is significant because it indicates that the exponential map does not have singularities of the type $f(x) = x^3$. This extends a result of Morse and Littauer to certain sub-Riemannian structures. This property for the exponential map is known to hold in Finsler geometry (see \cite[Theorem 3.4.]{warner1965}), and the author of this text recently expanded upon this result to include a wide class of sub-Riemannian manifolds in \cite{borzanonlocal} using different methods than those presented here.

\bibliographystyle{amsplain}
\bibliography{bibliography.bib}

\end{document}